\documentclass[11pt, oneside]{article}   	
\usepackage[top=2cm, bottom=2cm, left=2cm, right=2cm]{geometry}                		
\usepackage{dsfont}
\usepackage{graphicx}				
								
\usepackage{epstopdf}
\usepackage{placeins}
\DeclareGraphicsExtensions{.eps}	
\usepackage{amssymb,amsmath,amsthm}
\usepackage{amsfonts}
\usepackage{algorithm}
\usepackage{algorithmic}
\usepackage{caption}
\usepackage{subcaption}
\usepackage{color}
\usepackage{centernot}
\usepackage{bbm}

\newtheorem{theorem}{Theorem}
\numberwithin{theorem}{section}
\newtheorem{lemma}[theorem]{Lemma}

\newtheorem{corollary}[theorem]{Corollary}
\newtheorem{proposition}[theorem]{Proposition}

\newtheorem{definition}[theorem]{Definition}
\numberwithin{equation}{section}

\usepackage{url}

\newcommand*\samethanks[1][\value{footnote}]{\footnotemark[#1]}


\title{
Crossing estimates from metric graph and discrete GFF
}
\author{Jian Ding \thanks{Partially supported by an NSF grant DMS-1757479.}  \\ University of Pennsylvania
\\dingjian@wharton.upenn.edu
\and
Mateo Wirth \samethanks \\ University of Pennsylvania\\mwirth@wharton.upenn.edu
\and
Hao Wu \thanks{Supported by Chinese Thousand Talents Plan for Young Professionals and Beijing Natural Science Foundation (Z180003). } \\ Tsinghua University \\hao.wu.proba@gmail.com}
\date{\today}


\begin{document}

\thispagestyle{empty}
\maketitle
\begin{abstract}
We compare level-set percolation for Gaussian free fields (GFFs) defined on a rectangular subset of $\delta \mathbb{Z}^2$ to level-set percolation for GFFs defined on the corresponding metric graph as the mesh size $\delta$ goes to 0. In particular, we look at the probability that there is a path that crosses the rectangle in the horizontal direction on which the field is positive. We show this probability is strictly larger in the discrete graph. In the metric graph case, we show that for appropriate boundary conditions the probability that there exists a closed pivotal edge for the horizontal crossing event decays logarithmically in $\delta$. In the discrete graph case, we compute the limit of the probability of a horizontal crossing for appropriate boundary conditions. \\
\noindent\textbf{Keywords.} Gaussian free field, crossing event, scaling limit, pivotal edge. 
\end{abstract}
\linespread{1.2}

\newcommand{\diag}{\operatorname{diag}}
\newcommand{\dist}{\operatorname{dist}}
\newcommand{\var}{\operatorname{Var}}
\newcommand{\cov}{\operatorname{Cov}}
\newcommand{\Hm}{\operatorname{Hm}}
\newcommand{\hm}{\operatorname{hm}}
\newcommand{\sign}{\operatorname{sign}}
\newcommand{\adj}{\operatorname{adj}}
\newcommand{\mc}[1]{\mathcal{#1}}
\newcommand{\mbf}[1]{\mathbf{#1}}
\newcommand{\mbb}[1]{\mathbb{#1}}
\newcommand{\til}[1]{\tilde{#1}}
\newcommand{\wtil}[1]{\widetilde{#1}}
\newcommand{\E}{{\mathbb{E}}}
\newcommand{\ctG}{\til{\mc{G}}}
\newcommand{\cH}{\mathcal{H}}
\newcommand{\cI}{\mathcal{I}}
\newcommand{\PP}{{\mathbb{P}}}
\newcommand{\ZZ}{{\mathbb{Z}}}
\newcommand{\bX}{{\mathnormal{\bar{X}}}}
\newcommand{\by}{{\mathnormal{\bar{y}}}}
\newcommand{\bPhi}{\bar{\Phi}}
\newcommand{\one}{\mathbf{\mathbbm{1}}}
\newcommand{\0}{\mathbf{0}}
\newcommand\numberthis{\addtocounter{equation}{1}\tag{\theequation}}
\newcommand{\A}{\mathbb A}
\newcommand{\ABP}[2]{\protect\rotatebox[origin=c]{180}{$#1\A$}}
\newcommand{\AB}{{\mathpalette\ABP\relax}}
\newcommand{\cK}{\mathcal K}
\newcommand{\tB}{\til B}

\newcommand{\R}{\mathbb{R}}
\newcommand{\Z}{\mathbb{Z}}
\newcommand{\N}{\mathbb{N}}
\newcommand{\SLE}{\text{SLE}}
\newcommand{\HH}{\mathbb{H}}
\newcommand{\eps}{\epsilon}
\newcommand{\LF}{\mathcal{F}}
\newcommand{\LA}{\mathcal{A}}
\newcommand{\LB}{\mathcal{B}}
\newcommand{\cond}{\,|\,}

\section{Introduction}
In this note we will prove several results on level-set percolation of Gaussian free fields on the square lattice $\mbb{Z}^2$. The thrust of our results is that the probability that there exists a crossing of the domain in which the field is defined differs depending on whether we consider a discrete Gaussian free field or a metric graph Gaussian free field. Additionally, in the metric graph case we prove a bound on the probability that there exists a closed pivotal edge for the crossing. We begin with some basic definitions before stating our main results.

We call two vertices $u,v \in \mbb{Z}^2$ adjacent, and write $u \sim v$, if $|u - v| = 1$, where $|\cdot|$ denotes the standard Euclidean norm. Throughout, we will let $V \subset \mbb{Z}^2$ be a proper subset of $\mbb{Z}^2$. For such a set, we let $\partial V = \{v \in V \,:\, \exists u \in V^c, u \sim v\}$ and $V^o = V \setminus \partial V$.
We will denote by $\{S_t : t \geq 0\}$ the continuous-time simple random walk on $\ZZ^2$ with expected holding time $1$ at each vertex, by $\mbb{P}_v$ the law of $S$ where $S_0 = v$, and by $\mbb E_v$ the expectation with respect to $\mbb P_v$.  For $V \neq \mbb{Z}^2$ and $f: \partial V \to \mathbb{R}$ we call a Gaussian process $\{\phi(v): v\in V\}$ a discrete Gaussian free field (discrete GFF) on $V$ with boundary condition $f$ if its mean is the harmonic extension of $f$ to $V$ and its covariance is the Green's function on $V$. That is, letting $\zeta = \inf\{t \geq 0 \,:\, S_t \in \partial V\}$,
\begin{align}
\E[\phi(v)] &= \E_v[f(S_\zeta)], \quad v \in V\label{Discrete harmonic extension}\\
\cov[\phi(u) \phi(v)] = G(u,v) &= \E_u\left[ \int_0^{\zeta} \one_{\{S_t = v\}} dt\right], \quad u,v \in V\,.\label{Green definition}
\end{align}
We note that $\phi(v) = f(v)$ for $v \in \partial V$ (since $\var[\phi(v)] = G(v,v) = 0$), and that the distribution of $\phi$ is uniquely determined by~\eqref{Discrete harmonic extension} and~\eqref{Green definition}.  

Next, we extend these definitions to metric graphs on $\mbb Z^2$. 
To simplify notation, we will identify each subset $V$ of the lattice with the graph with vertex set $V$ and where two vertices $u,v \in V$ are connected by an edge if $u \sim v$. We denote by $E$ the edge set of this graph. To each $e\in E$ we associate a different compact interval $I_e$ of length $2$ and identify the endpoints of this interval with the two vertices adjacent to $e$. The metric graph $\til V$ associated to $V$ is then defined to be $\til V = \cup_{e\in E} I_e$. With these definitions, it was shown in \cite{Lupu16} that the metric graph Gaussian free field (metric graph GFF) on $\til V$ with boundary condition $f$, denoted by $\{\til{\phi}(v) \,:\, v \in \til V\}$, can be constructed by extending $\phi$ to $\til V$ in the following manner: for adjacent vertices $u$ and $v$, the value of $\til{\phi}$ on the edge $e(u,v)$, conditioned on $\phi(u)$ and $\phi(v)$, is given by an independent bridge of length $2$ of a Brownian motion with variance 2 at time 1 with boundary values $\phi(u)$ and $\phi(v)$. 

With these definitions in place, we specify the crossing events that we will study in this paper. It will be convenient to think of $\til{\mbb{Z}}^2$ as a subset of the complex plane $\mbb{C}$. That is, taking $\mbb{Z}^2$ as a subset of $\mbb{C}$ in the obvious way, we identify the interval $I_e$ corresponding to an edge $e = e(u,v)$ in the discrete graph with the line segment between $u$ and $v$ (this requires re-scaling $I_e$).  For $L > 0$, we let $R_L$ be the rectangle 
\[R_L = \{z\,:\, 0< \Re(z) < L,\, 0 < \Im(z) < 1\}.\] 
Given such a rectangle, we let $(a,b,c,d)$ be its corners, listed in counter-clockwise order with $b = 0$. For $\delta > 0$, we let $V_\delta = R_L \cap \delta \mbb Z^2$ and $(a_\delta,b_\delta,c_\delta,d_\delta)$ be the corners of $V_\delta$, listed in counter-clockwise order so that $b_\delta$ is closest to the origin (we always take $\delta \leq (L \wedge 1)/3$ so that the corners are distinct and $V_\delta^o$ is non-empty). For two vertices $u,v \in \partial V_\delta$, we let $[u,v)$ be the counter-clockwise arc from $u$ to $v$ in $\partial V_\delta$ which contains $u$ but not $v$.
We let $\phi_\delta$ be a discrete GFF on $V_\delta$. We say $\phi_\delta$ has zero boundary condition if $\phi_\delta(v) = 0$ for $v \in \partial V_\delta$, and that it has alternating boundary condition if there exists $\lambda >0$ such that
\begin{equation}\label{eqn::alternatingbc}
\phi_{\delta}(v) =
\begin{cases}
	\lambda, & v \in [a_\delta, b_\delta) \cup [c_\delta, d_\delta),\\
	-\lambda, & v \in [b_\delta,c_\delta) \cup [d_\delta, a_\delta).
\end{cases}
\end{equation}

 In either case, we let $\til{\phi}_\delta$ be the corresponding Gaussian free field on the metric graph $\til{V}_\delta$ such that $\til{\phi}_{\delta}(v) = \phi_{\delta}(v)$ for $v \in V_\delta$. Our goal is to study the probability that $\phi_\delta$ (resp. $\til{\phi}_\delta$) gives a positive horizontal crossing of $V_\delta$ (resp. $\til{V}_\delta$). That is, we want to study the event that there exists a path in $V_\delta$ (resp. $\til V_\delta$) from $[a_\delta, b_\delta)$ to $[c_\delta, d_\delta)$ such that $\phi_\delta$ (resp. $\til{\phi}_\delta$) is non-negative on this path. We denote this event by
\[
\left\{[a_\delta, b_\delta) \stackrel{\phi_\delta \geq 0}{\longleftrightarrow} [c_\delta, d_\delta)\right\}
\]
in the discrete case and similarly in the metric graph case. If $\til\phi_\delta$ has zero boundary condition, the probability of a positive crossing as defined above is equal to zero. For $\phi_\delta$ this probability is one since we can just take a path on $\partial V_\delta$. Therefore, in this case we take $[a_\delta', b_\delta') = \{u \in V_\delta^o \,:\, \exists v \in [a_\delta, b_\delta), u \sim v\}$ and similarly for $[c_\delta', d_\delta')$, and consider the event
\[
\left\{[a'_\delta, b'_\delta) \stackrel{\phi_\delta > 0}{\longleftrightarrow} [c'_\delta, d'_\delta)\right\},
\]
and similarly for $\til\phi_\delta$. It is clear that with probability 1, $\phi(v) \neq 0$ for all $v \in V_\delta^o$, and the zero set of $\til\phi_\delta$ has no isolated points. We will assume that these conditions hold throughout.

In the zero-boundary case, we show that the probability that there is a positive crossing in the metric graph decays to 0 as $\delta \to 0$ (and provide a bound on the rate of decay), while the probability that there is a positive crossing in the discrete graph remains bounded away from zero.

\begin{theorem}\label{thm::Zero boundary result}
Let $L >0$ be a constant, $\phi_\delta$ be a zero-boundary discrete GFF on $V_\delta$, and $\til\phi_\delta$ be the corresponding metric graph GFF on $\til V_\delta$. There exist constants $c = c(L) > 0$ and $\delta_0 = \delta_0(L) > 0$ such that for $\delta \leq \delta_0$,
\begin{equation}\label{MGFF zero boundary result}
\mbb{P}\left([a'_\delta, b'_\delta) \stackrel{\til\phi_\delta > 0}{\longleftrightarrow} [c'_\delta, d'_\delta)\right) \leq \frac{c}{\sqrt{|\log(\delta)|}}.
\end{equation}
In contrast there exists $\eta_1 = \eta_1(L) > 0$ such that for $\delta \leq \delta_0$,
\begin{equation}\label{DGFF zero boundary result}
\eta_1 \leq \mbb{P}\left([a'_\delta, b'_\delta) \stackrel{\phi_\delta > 0}{\longleftrightarrow} [c'_\delta, d'_\delta)\right) \leq 1 - \eta_1.
\end{equation}
\end{theorem}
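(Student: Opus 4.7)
For the metric-graph bound \eqref{MGFF zero boundary result} my plan is to reduce to a one-arm estimate for $\{\til\phi_\delta>0\}$. The key input is Lupu's one-arm probability for the zero-boundary metric graph GFF on a domain of diameter $\Theta(1)$ with mesh $\delta$: at an interior vertex $v$ at Euclidean distance $\Theta(1)$ from $\partial\til V_\delta$, the positive sign cluster of $v$ reaches distance $\Theta(1)$ from $v$ with probability of order $1/\sqrt{|\log\delta|}$. I would combine this with a point count on a midsegment $\ell = (\{L/2\}\times[0,1])\cap \til V_\delta$: let $N$ be the number of vertices of $V_\delta$ lying on $\ell$ whose positive cluster reaches both $[a_\delta',b_\delta')$ and $[c_\delta',d_\delta')$. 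Since any horizontal positive path must meet $\ell$ at a vertex, the crossing event equals $\{N\geq 1\}$, and the first-moment estimate using Lupu's one-arm bound immediately gives $\mbb{E}[N]\leq C/(\delta\sqrt{|\log\delta|})$.

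The first moment alone is far too weak, so I need a thickness estimate: conditional on $\{N\geq 1\}$, the bound $N\geq c/\delta$ holds with conditional probability at least $\epsilon_0>0$. Granted this, $\mbb{E}[N]\geq \epsilon_0(c/\delta)\,\mbb{P}(N\geq 1)$, which combined with the first-moment bound gives the required $1/\sqrt{|\log\delta|}$ decay. To prove thickness I would invoke Lupu's isomorphism between the positive sign clusters of $\til\phi_\delta$ and the clusters of a Brownian loop soup at intensity $1/2$ on $\til V_\delta$ together with boundary excursion arcs: a macroscopic loop-soup cluster must carry $\Theta(1)$ worth of occupation measure on the metric graph, and a direct area argument converts this into $\Theta(1/\delta)$ vertices of $V_\delta$ on any macroscopic transversal segment. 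Alternatively one can explore the cluster boundary using the strong Markov property of $\til\phi_\delta$ across its own zero set and argue that each macroscopic excursion captures a $\Theta(1)$ fraction of $\ell$. Establishing thickness is the principal technical obstacle for \eqref{MGFF zero boundary result}.

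For the discrete bounds \eqref{DGFF zero boundary result} both directions rely on the sign symmetry $\phi_\delta \stackrel{d}{=} -\phi_\delta$ under zero boundary conditions. For the lower bound, I would condition on $\phi_\delta(v_0)\geq c\sqrt{|\log\delta|}$ at a single interior point $v_0$; by the Markov decomposition of the discrete GFF this shifts the field by a harmonic function of order $\sqrt{|\log\delta|}$ throughout a macroscopic neighbourhood of $v_0$, and combined with the FKG inequality one can extract an $\eta_1>0$ probability of a positive horizontal crossing. For the upper bound, I would use planar duality together with the sign symmetry: the non-existence of a $4$-connected horizontal positive crossing is equivalent to the existence of an $8$-connected vertical negative crossing in $V_\delta$, which by the sign symmetry has the same probability as an $8$-connected vertical positive crossing; a Russo--Seymour--Welsh style argument for the discrete GFF (using the FKG inequality for level sets) then yields a uniform lower bound on this crossing probability, bounding the original crossing probability away from $1$.
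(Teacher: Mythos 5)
Your proposal takes a genuinely different route from the paper for both parts, but each half has a serious gap.

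\textbf{Metric graph bound.} The paper does not use a point-count or one-arm argument at all. It explores $\til E^{\geq 0}_\delta$ from $[c'_\delta,d'_\delta)$, tracks the Doob martingale $M_t=\E[X_U\mid\mc F_{\mc I_t}]$ for an observable $X_U$ concentrated near $[a'_\delta,b'_\delta)$, shows that on the crossing event the quadratic variation accumulates $\langle M\rangle_\infty\gtrsim|\log\delta|$ (via dyadic annuli and a harmonic-measure estimate), and closes with Dubins--Schwarz and the Brownian hitting-time bound. Your route --- first moment of crossing points on a transversal plus a conditional ``thickness'' second-moment input --- is plausible in spirit, but as written it has two unresolved pieces: (i) the one-arm bound of order $|\log\delta|^{-1/2}$ for the metric-graph sign cluster is not something you can cite; it does not appear in Lupu's work, and the paper's own discussion (noting that $|\log\delta|^{-1/2}$ is suboptimal and that $|\log\delta|^{-2}$ is believed to be sharp) is strong evidence that the exponent you are reaching for is not the right one, and (ii) the thickness estimate is asserted, not proved, and you acknowledge it is ``the principal technical obstacle.'' So this half of the proposal would need both a correct one-arm estimate and a proved thickness lemma before it counts as a proof.

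\textbf{Discrete graph bound.} Here there are concrete errors. For the lower bound you condition on $\phi_\delta(v_0)\geq c\sqrt{|\log\delta|}$ and assert that this ``shifts the field by a harmonic function of order $\sqrt{|\log\delta|}$ throughout a macroscopic neighbourhood.'' That is not so: the conditional shift at a point $w$ is $c\sqrt{|\log\delta|}\cdot G(v_0,w)/G(v_0,v_0)$, and since $G(v_0,v_0)\asymp|\log\delta|$ while $G(v_0,w)=O(1)$ at macroscopic distance, the shift at such $w$ is $O(|\log\delta|^{-1/2})\to 0$. Conditioning on a single point being $O(\sqrt{|\log\delta|})$ has essentially no macroscopic effect, so this conditioning cannot produce a crossing with probability $\eta_1>0$. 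For the upper bound you correctly note that sign symmetry and planar duality reduce it to a lower bound for the dual negative crossing; however, you then ``invoke an RSW-style argument for the discrete GFF, using FKG.'' There is no off-the-shelf RSW for zero-boundary discrete GFF level sets; producing precisely such a crossing lower bound is what this theorem is asserting, so this step is circular. The paper instead proves the lower bound by an entropic-repulsion mechanism (Proposition~3.1 in the source): it explores $E^{\geq 0}_\delta$, shows that the exploration boundary harmonic measure is ``spread out'' ($\xi_k\lesssim 1/|\log\delta|$), deduces via the Brascamp--Lieb/FKG-monotonicity machinery that, conditional on no crossing, the weighted average of $\phi_\delta$ on $\partial\mc I_\infty$ is bounded away from $0$ (below $-\Delta\Hm(U,\mc I_\infty)$ with high probability), and then beats this against an upper bound on the quadratic variation. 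This repulsion step is the missing ingredient in your proposal.
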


For alternating boundary conditions, we show that the probability that there is a positive crossing in the metric graph remains bounded away from zero as $\delta \to 0$, but is also strictly smaller than the probability that there is a positive crossing in the discrete graph.
\begin{theorem}\label{thm::Alternating boundary result}
Let $L, \lambda >0$ be positive constants, $\phi_\delta$ be a discrete GFF on $V_\delta$ with alternating boundary condition~\eqref{eqn::alternatingbc}, and $\til\phi_\delta$ be the corresponding metric graph GFF on $\til V_\delta$. There exist constants $\eta_2 = \eta_2(L,\lambda)> 0$, $\eta_3 = \eta_3(L,\lambda) > 0$, and $\delta_0 = \delta_0(L) > 0$ such that for $\delta \leq \delta_0$,
\begin{align}
\eta_2 \leq \mbb{P}\left([a_\delta, b_\delta) \stackrel{\til \phi_\delta \geq 0}{\longleftrightarrow}[c_\delta, d_\delta)\right) \leq 1 - \eta_2, \label{MGFF alternating boundary result}\\
\eta_3 \leq \mbb{P}\left([a_\delta, b_\delta) \stackrel{\phi_\delta \geq 0}{\longleftrightarrow}[c_\delta, d_\delta)\right) \leq 1 - \eta_3.\label{DGFF alternating boundary result}
\end{align}
Additionally, there exists $\eta_4 = \eta_4(L,\lambda)>0$ such that for $\delta \leq \delta_0$,
\begin{equation}\label{Alternating boundary difference result}
\mbb{P}\left([a_\delta, b_\delta) \stackrel{\phi_\delta \geq 0}{\longleftrightarrow}[c_\delta, d_\delta)\right) - \mbb{P}\left([a_\delta, b_\delta) \stackrel{\til\phi_\delta \geq 0}{\longleftrightarrow}[c_\delta, d_\delta)\right) \geq \eta_4.
\end{equation}
\end{theorem}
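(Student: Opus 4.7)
I would prove the three inequalities in turn, with the essential effort on \eqref{Alternating boundary difference result}. For \eqref{MGFF alternating boundary result} and \eqref{DGFF alternating boundary result}, the lower bound comes from a single-corridor FKG construction: the alternating boundary gives positive harmonic mean $\approx \lambda$ near the arcs $[a_\delta,b_\delta)$ and $[c_\delta,d_\delta)$, so a thin horizontal corridor of lattice vertices at height $\approx 1/2$ has all values non-negative with probability bounded below in $\delta$. The upper bound follows by a standard planar duality argument: a ``dual'' negative vertical crossing event has bounded-below probability (using a similar FKG construction near the top or bottom of $R_L$, where the mean is $-\lambda$), and this event is essentially disjoint from the horizontal positive crossing event. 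The discrete lower bound in \eqref{DGFF alternating boundary result} is also immediate from the metric graph lower bound via the inclusion $\til A_\delta \subset A_\delta$ established below.

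For \eqref{Alternating boundary difference result}, let $A_\delta$ and $\til A_\delta$ denote the discrete and metric graph crossing events, respectively. The starting point is the deterministic inclusion $\til A_\delta \subset A_\delta$: any continuous non-negative path in $\til V_\delta$ from $[a_\delta,b_\delta)$ to $[c_\delta,d_\delta)$ traces out a sequence of adjacent lattice vertices on which $\phi_\delta = \til\phi_\delta \geq 0$, giving a discrete crossing. Hence
\[
\mbb{P}(A_\delta) - \mbb{P}(\til A_\delta) \;=\; \mbb{P}(A_\delta \setminus \til A_\delta),
\]
and it suffices to construct an event $F_\delta \subset A_\delta \setminus \til A_\delta$ with $\mbb{P}(F_\delta) \geq \eta_4 > 0$ uniformly in $\delta$.

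The strategy is to engineer a pivotal configuration around a fixed central vertex $v^* \in V_\delta^o$. Fix small constants $r \in (0, 1/10)$ independent of $\delta$ and $\eps > 0$ to be chosen. Using a multi-scale decomposition of $\phi_\delta$ into a bounded-variance microscopic part near $v^*$ and a macroscopic remainder, construct an event $E_\delta$ on which (a) the values of $\phi_\delta$ at $v^*$ and at each of its four lattice neighbors lie in $(0, \eps)$; (b) there is a negative moat: $\phi_\delta < -M$ at every lattice vertex of the annulus $B(v^*, r) \setminus B(v^*, r/2)$ away from a small number of ``gate'' vertices; and (c) $\phi_\delta > 0$ along two designated corridors of lattice vertices in $R_L \setminus B(v^*, r)$ joining the gates to both arcs $[a_\delta,b_\delta)$ and $[c_\delta,d_\delta)$. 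By a Cameron--Martin shift along a smooth profile of bounded Dirichlet energy, standard Gaussian density estimates on the microscopic piece, and FKG arguments on the macroscopic piece, one shows $\mbb{P}(E_\delta) \geq c > 0$ uniformly in $\delta$. On $E_\delta$, the corridors realize a positive discrete crossing, and the annular moat forces every positive discrete crossing to enter $B(v^*, r/2)$ only through the gates, whence through $v^*$.

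Conditional on $\phi_\delta$, the Brownian bridges on distinct edges of $\til V_\delta$ are independent. On $E_\delta$ each bridge incident to $v^*$ has both endpoints in $(0, \eps)$, so the classical formula gives that it dips below $0$ with probability $\geq 1 - O(\eps^2)$. Taking $\eps$ small, $F_\delta = E_\delta \cap \{\text{all four bridges incident to } v^* \text{ dip below } 0\}$ has probability $\geq c/2$. On $F_\delta$, any metric graph crossing induces (through the vertex sequence it visits) a positive discrete crossing, which by the cut property of Step~(b) must pass through $v^*$ and therefore traverse some edge incident to $v^*$; but every such edge has an interior arc on which $\til\phi_\delta < 0$, precluding a continuous non-negative traversal. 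Hence $F_\delta \subset A_\delta \setminus \til A_\delta$, which gives \eqref{Alternating boundary difference result} with $\eta_4 = c/2$. The main technical obstacle is the lower bound $\mbb{P}(E_\delta) \geq c$: prescribing a sign pattern at many vertices in a growing annulus while keeping the probability bounded below in $\delta$ requires a careful multi-scale argument using the near-continuum behavior of the discrete Green's function and the discrete harmonic measure in $R_L$, and this is where the bulk of the work in the proof lies.
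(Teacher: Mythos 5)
Your high-level reduction is sound and matches the paper: $\til A_\delta\subset A_\delta$, so a lower bound on $\mbb P(\til A_\delta)$ gives a lower bound on $\mbb P(A_\delta)$, the upper bounds follow by the dual negative vertical crossing, and for the gap~\eqref{Alternating boundary difference result} it suffices to exhibit an event of uniformly positive probability contained in $A_\delta\setminus\til A_\delta$. However, both of the substantive constructions you propose break down for the same reason: the variance of $\phi_\delta$ at an interior vertex $v$ is $\var[\phi_\delta(v)]=G(v,v)=\Theta(|\log\delta|)$ (Lemma~\ref{Greens scaling} after rescaling), and this divergence kills the intended lower bounds.

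For the FKG corridor argument behind the lower bound in~\eqref{MGFF alternating boundary result}: a horizontal corridor at height $\approx 1/2$ consists of $\Theta(L/\delta)$ vertices, each with mean $O(\lambda)$ and variance $\Theta(|\log\delta|)$. Even at a single interior vertex $\mbb P(\phi_\delta(v)\geq 0)$ is only $\approx 1/2$, and the conjunction over $\Theta(1/\delta)$ vertices is not bounded below by FKG (which only gives a product lower bound). The same objection applies to the claimed dual vertical corridor. The paper avoids this entirely: the lower bound in~\eqref{MGFF alternating boundary result} (Section~\ref{subsec::MGFF_alternatingbc}) is proved via exploration martingales applied to the first passage sets $\til\AB_{\delta,0}$ and $\til\A_{\delta,0}$, never requiring pointwise positivity along any deterministic macroscopic curve; one first shows $\til\AB_{\delta,0}$ avoids a horizontal band with bounded-away-from-zero probability, and then conditionally shows the positive first passage set connects the two short sides.

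For~\eqref{Alternating boundary difference result}, the event $E_\delta$ you construct has probability tending to $0$, not bounded below. Already condition (a), that $\phi_\delta(v^*)\in(0,\eps)$, has probability $\Theta\bigl(\eps/\sqrt{|\log\delta|}\bigr)\to 0$ because $\phi_\delta(v^*)$ is Gaussian with variance $\Theta(|\log\delta|)$. Condition (b) is worse: requiring $\phi_\delta<-M$ on the macroscopic annulus $B(v^*,r)\setminus B(v^*,r/2)$ (which contains $\Theta(r^2/\delta^2)$ vertices) has probability going to $0$ by the thick-point phenomenon --- $\max_{A}\phi_\delta\to+\infty$ on any fixed macroscopic $A$ as $\delta\to 0$, and subtracting a bounded-Dirichlet-energy profile via Cameron--Martin shifts the mean by $O(1)$ and leaves the maximum divergent. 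No multi-scale decomposition circumvents this: the macroscopic part of the field necessarily fluctuates by $\Theta(\sqrt{|\log\delta|})$ at $v^*$, so prescribing $\phi_\delta(v^*)$ in a window of width $\eps$ is an asymptotically impossible event, and the remaining conditions make matters worse rather than better. The paper's proof of~\eqref{Alternating boundary difference result} (Section~\ref{subsec::DGFF_alternatingbc}) is of a genuinely different, non-local character: it conditions on $\mc F_{\til\AB_{\delta,0}\cup\til\A_{\delta,0}^l}$, shows that with uniformly positive conditional probability there is no metric graph crossing, and then uses the entropic repulsion estimate (Proposition~\ref{Entropic repulsion result}) to show that even so a \emph{discrete} positive crossing exists with uniformly positive conditional probability --- the extra discrete crossings use many ``barely positive'' vertices spread along the boundary of $\til\A_{\delta,0}^l$ rather than a single pinch point.
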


It is then natural to ask what are the limits of the crossing probabilities in~\eqref{DGFF zero boundary result}, \eqref{MGFF alternating boundary result} and~\eqref{DGFF alternating boundary result}. We will give an explicit answer to the one in~\eqref{DGFF alternating boundary result}; for the other cases, see discussion in Section~\ref{sec::scaling_limits}. There exists $\lambda_0=\lambda_0(\Z^2)>0$ such that the level line in discrete GFF with alternating boundary condition~\eqref{eqn::alternatingbc} converges to $\SLE_4(-2;-2)$ process, see Section~\ref{sec::scaling_limits}. Consequently, we have the following. 

\begin{theorem}\label{thm::crossproba_limit}
When $\lambda=\lambda_0$, the crossing probability in~\eqref{DGFF alternating boundary result} has the following limit: 
\begin{equation}\label{eqn::crossingproba_limit}
\lim_{\delta\to 0}\mbb{P}\left([a_\delta, b_\delta) \stackrel{\phi_\delta \geq 0}{\longleftrightarrow}[c_\delta, d_\delta)\right)=
\frac{(\varphi(b)-\varphi(a))(\varphi(d)-\varphi(c))}{(\varphi(c)-\varphi(a))(\varphi(d)-\varphi(b))},
\end{equation}
where $\varphi$ is any conformal map from $R_L$ onto the upper-half plane $\HH$ with $\varphi(a)<\varphi(b)<\varphi(c)<\varphi(d)$.
\end{theorem}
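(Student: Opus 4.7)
The plan is to identify the discrete crossing event as a topological event for the zero level line of $\phi_\delta$, pass to the scaling limit via the convergence of this level line to $\SLE_4(-2;-2)$ at $\lambda=\lambda_0$ recalled in Section~\ref{sec::scaling_limits}, and then carry out an explicit martingale computation for the limiting process.

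Since the alternating boundary condition~\eqref{eqn::alternatingbc} changes sign at exactly the four corners $a_\delta,b_\delta,c_\delta,d_\delta$, the zero set of $\phi_\delta$ consists almost surely of two macroscopic interfaces that pair the four corners up in a non-crossing way. The only possibilities are $(\mathrm{A})\colon\{a_\delta\!\leftrightarrow\! b_\delta,\,c_\delta\!\leftrightarrow\! d_\delta\}$ or $(\mathrm{B})\colon\{a_\delta\!\leftrightarrow\! d_\delta,\,b_\delta\!\leftrightarrow\! c_\delta\}$. In $(\mathrm{A})$ the positive boundary arcs $[a_\delta,b_\delta)$ and $[c_\delta,d_\delta)$ are separated by a negative corridor, whereas in $(\mathrm{B})$ they are joined by a positive corridor. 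The event in~\eqref{eqn::crossingproba_limit} is therefore exactly configuration $(\mathrm{B})$, equivalently the event $E_\delta$ that the level line starting from $a_\delta$ terminates at $d_\delta$ rather than at $b_\delta$.

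At $\lambda=\lambda_0$ this level line converges to an $\SLE_4(-2;-2)$ path $\gamma$ in $R_L$ started at $a$, with force points at $b$ and $d$ and topological target $c$; the $\rho=-2$ weights guarantee that $\gamma$ is almost surely swallowed by one of the two force points. By conformal invariance I transport $\gamma$ to $\HH$ via $\varphi$ and then apply a further M\"obius map $T$ with $T(\varphi(a))=0$ and $T(\varphi(c))=\infty$, so that the image is a standard $\SLE_4(-2;-2)$ in $\HH$ from $0$ to $\infty$ with force points at $v':=T(\varphi(b))>0$ and $z':=T(\varphi(d))<0$. Writing $W_t$ for the driving Brownian motion, $V^L_t,V^R_t$ for the Loewner images of $z'$ and $v'$, and setting $X_t:=W_t-V^L_t$, $Y_t:=V^R_t-W_t$, a one-line It\^o computation yields
\[
M_t \;:=\; \frac{X_t}{X_t+Y_t} \quad\text{satisfies}\quad dM_t \;=\; \frac{2\,dB_t}{X_t+Y_t}\,,
\]
so $M$ is a bounded martingale taking value $0$ (resp.\ $1$) when $\gamma$ hits the left (resp.\ right) force point. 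Optional stopping then gives $\PP(\gamma\text{ hits }z')=v'/(v'-z')$, and the identity $(v-u)(z-w)+(z-u)(w-v)=(w-u)(z-v)$ applied to $(u,v,w,z)=(\varphi(a),\varphi(b),\varphi(c),\varphi(d))$ rewrites this hitting probability as the cross-ratio on the right-hand side of~\eqref{eqn::crossingproba_limit}.

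The main obstacle will be the passage from the discrete event $E_\delta$ to the analogous event for $\gamma$. Convergence of the level line in the Carath\'eodory or Hausdorff topology does not automatically imply convergence of $\mathbf{1}_{E_\delta}$, since the indicator that the endpoint lies in a prescribed boundary arc is not a continuous functional of the curve. Overcoming this requires uniform-in-$\delta$ control of the discrete interfaces near each of the four corners, combined with boundary regularity of $\SLE_4(-2;-2)$ at its force points, so that $\mathbf{1}_{E_\delta}$ can be sandwiched between continuous functionals whose limits coincide.
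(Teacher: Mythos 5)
Your proposal is correct and follows essentially the same route as the paper: identify the crossing event topologically with the way the two corner-to-corner interfaces pair up, invoke the scaling-limit convergence of the level line to $\SLE_4(-2;-2)$ at $\lambda=\lambda_0$, and conclude with an optional-stopping computation for the hitting probability of the force points; your martingale $M_t=X_t/(X_t+Y_t)$ is just $\tfrac{1}{2}(1+\hat W_t)$ in the notation of Section~\ref{subsec::sle}, and the cross-ratio algebra checks out. One small point worth tightening: the convergence result recorded in Section~\ref{sec::scaling_limits} (Theorem~\ref{thm::crossingproba_cvg_dgff} and Lemma~\ref{lem::crossingproba_cvg_drivingfunction}) is stated for the level line launched from $b_\delta^{\diamond}$, not from $a_\delta$; you can either reflect the domain across the horizontal midline (which swaps $a\leftrightarrow b$, $c\leftrightarrow d$, preserves the boundary data, and flips the chirality of the interface) to transfer the statement to the interface from $a$, or simply run your argument with the $b$-interface as the paper does, in which case the crossing event is $\{\eta(T)=\varphi(c)\}$ and the same martingale gives $\PP(\eta(T)=y^R)=-y^L/(y^R-y^L)$, which agrees with the cross-ratio after normalizing $\varphi(b)=0,\ \varphi(d)=\infty$. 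The ``main obstacle'' you flag — that convergence in the Hausdorff/Carath\'eodory sense does not by itself give convergence of the endpoint indicator — is indeed the delicate point, and the paper disposes of it exactly as you suggest, by using continuity of $\SLE_4(-2;-2)$ up to and including the swallowing time $T$ together with the fact that the curve meets $\partial\HH$ only at its two endpoints on $[0,T]$.
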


For the metric graph GFF, we can also prove a bound on the probability that there exists a closed pivotal edge for the event that there is a positive crossing. To specify what we mean, we begin by defining an edge percolation model. Let $E_\delta$ be the edge set of the nearest-neighbor graph on $V_\delta$ and $\omega: E_\delta \to \{0,1\}$ be such that $\omega(e) = 1$ if $\til\phi_\delta(u) \geq 0$ for all $u \in I_e$ and $\omega(e) = 0$ otherwise. We say that $e$ is open if $\omega(e) = 1$ and  that $e$ is closed otherwise. For an edge $e \in E_\delta$, we let $\omega^e$ and $\omega_e$ be defined by
\begin{equation}\label{eqn::def_bondperco}
\omega^e(f)  = \begin{cases}
1,\, f = e,\\ \omega(f),\, f \neq e.
\end{cases} \quad
\omega_e(f)  = \begin{cases}
0,\, f = e,\\ \omega(f),\, f \neq e.
\end{cases}
\end{equation}
We write
\[
\left\{[a_\delta, b_\delta) \stackrel{\omega}{\longleftrightarrow}[c_\delta, d_\delta)\right\}
\]
for the event that there is a path of open edges in $\omega$ from $[a_\delta, b_\delta)$ to $[c_\delta, d_\delta)$ (note that this is exactly the event that there is a positive horizontal crossing in $\til V_\delta$). We can then define
\begin{align*}
\{e \text{ is pivotal}\} &= \left\{[a_\delta, b_\delta) \stackrel{\omega^e}{\longleftrightarrow}[c_\delta, d_\delta)\right\} \setminus \left\{[a_\delta, b_\delta) \stackrel{\omega_e}{\longleftrightarrow}[c_\delta, d_\delta)\right\},\\
\{e \text{ is closed pivotal}\} &= \{e \text{ is pivotal}\} \cap \{w(e) = 0\},\\
\{ \text{there exists a closed pivotal edge} \} &= \cup_{e \in E_\delta} \{e \text{ is closed pivotal}\}.
\end{align*}
With these definitions, we obtain the following result
\begin{theorem}\label{thm::Pivotal edges result}
Let $L, \lambda >0$ be positive constants and $\til\phi_\delta$ be the metric graph GFF on $\til V_\delta$ with alternating boundary condition~\eqref{eqn::alternatingbc}.
There exist constants $c = c(L,\lambda) > 0$ and $\delta_0 = \delta_0(L) > 0$ such that for $\delta \leq \delta_0$,
\[
\mbb{P}(\text{\emph{there exists a closed pivotal edge}}) \leq \frac{c}{\sqrt{|\log(\delta)|}}.
\]
\end{theorem}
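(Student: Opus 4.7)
The plan is to reduce the existence of a closed pivotal edge to a positive horizontal crossing event for a zero-boundary metric graph GFF on a sub-rectangle, and then apply~\eqref{MGFF zero boundary result} in Theorem~\ref{thm::Zero boundary result}.

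I first analyze what a closed pivotal edge entails geometrically. If $e=e(u,v)$ is closed pivotal, then up to swapping the endpoints there exist open paths from $[a_\delta,b_\delta)$ to $u$ and from $v$ to $[c_\delta,d_\delta)$ on which $\til\phi_\delta\geq 0$ (with $\til\phi_\delta(u),\til\phi_\delta(v)\geq 0$), while $\til\phi_\delta$ dips below zero somewhere on $I_e$. In particular the two positive sign clusters $K_u\ni u$ and $K_v\ni v$ are distinct. By the continuity of sign clusters for the metric graph GFF, $K_u$ and $K_v$ are enclosed by zero-level curves of $\til\phi_\delta$ meeting $\partial V_\delta$ on the two $+\lambda$-arcs; moreover, because no positive horizontal crossing exists, a negative vertical crossing from $[b_\delta,c_\delta)$ to $[d_\delta,a_\delta)$ must occur, and the pivotality of $e$ forces this negative crossing to pass through the negative portion of the bridge on $I_e$.

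Next, I invoke the domain Markov property. After exploring the two zero-level curves $\gamma_u,\gamma_v$ enclosing $K_u$ and $K_v$ (this exploration is measurable, using the standard theory of level lines for the metric graph GFF), the restriction of $\til\phi_\delta$ to the region $D\subset \til V_\delta$ between $\gamma_u$ and $\gamma_v$ has, conditionally, the law of a metric graph GFF on $D$ with boundary values $0$ on $\gamma_u\cup\gamma_v$ and $-\lambda$ on $\partial V_\delta\cap\partial D\subset [b_\delta,c_\delta)\cup[d_\delta,a_\delta)$. The edge $I_e$ lies in $D$. Subtracting the harmonic extension of the boundary values yields a zero-boundary metric graph GFF $\psi$ on $D$; since the subtracted harmonic function is $\leq 0$ on $D$, the event that $\til\phi_\delta|_D$ admits a negative crossing passing through $I_e$ is contained in the event that $\psi$ admits such a negative crossing, i.e., by symmetry under $\psi\mapsto -\psi$, that $-\psi$ admits a positive crossing between the two zero-level arcs $\gamma_u$ and $\gamma_v$.

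Finally, I apply Theorem~\ref{thm::Zero boundary result} to $-\psi$ on a sub-rectangle of $D$ that is separated from $\gamma_u,\gamma_v$ by arcs of length $\asymp 1$. The existence of a closed pivotal edge implies the existence of a positive horizontal crossing for the zero-boundary metric graph GFF on this sub-rectangle; by~\eqref{MGFF zero boundary result}, this crossing has probability at most $c/\sqrt{|\log\delta|}$. Crucially, no union bound over $e\in E_\delta$ is needed, because the reduction only uses the global objects $(K_u,K_v,\gamma_u,\gamma_v)$, which are determined once any closed pivotal edge exists.

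The main obstacle is setting up the third step so that the Theorem~\ref{thm::Zero boundary result} estimate applies in the correct direction. Because the conditional boundary values on $D$ are a mixture of $0$ and $-\lambda$, a naive FKG comparison gives a \emph{lower} bound on the relevant crossing probability; the resolution is to work with the zero-boundary field $\psi$ obtained by subtracting the harmonic extension (exploiting the fact that the harmonic extension has a definite sign), and to carefully choose the inscribed sub-rectangle so its aspect ratio is bounded independently of $\delta$ (the degenerate case where $\gamma_u$ and $\gamma_v$ pinch off a thin $D$ must be handled by an a priori tail estimate, which itself is a crossing-type bound for $\til\phi_\delta$).
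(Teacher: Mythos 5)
Your strategy differs from the paper's: the paper conditions only on the \emph{left} first passage set $\til\A_{\delta,0}^l$ and runs an exploration martingale from the right boundary in the remaining domain, getting the $|\log\delta|$ factor from a multi-scale harmonic-measure estimate analogous to~\eqref{eq-path to boundary Harmonic measure} (plus factors of $\eta/\delta$ to handle the rough boundary). Your reduction to Theorem~\ref{thm::Zero boundary result} cannot be made to work as stated, for several interlocking reasons. First, once you condition on \emph{both} zero-level curves $\gamma_u$ and $\gamma_v$ bounding $K_u=\til\A_{\delta,0}^l$ and $K_v=\til\A_{\delta,0}^r$, the event ``there exists a closed pivotal edge'' is measurable with respect to the conditioning (indeed, as the paper notes, given no horizontal crossing the closed pivotal edges are exactly the edges with one endpoint in each of $\til\A_{\delta,0}^l$ and $\til\A_{\delta,0}^r$). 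So the conditional probability you are trying to bound is $\{0,1\}$-valued and bounding it tells you nothing about the unconditional probability. The correct move, as in the paper, is to condition on one cluster only and then study the probability that the other approaches to within distance $\delta$.

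Second, the sign in your harmonic-subtraction step is reversed: if $h\le 0$ is the harmonic extension of the boundary data ($0$ on $\gamma_u\cup\gamma_v$ and $-\lambda$ on $\partial V_\delta\cap\partial D$), then $\psi=\til\phi_\delta-h\ge\til\phi_\delta$, so the set $\{\til\phi_\delta\le 0\}$ \emph{contains} $\{\psi\le 0\}$ rather than being contained in it; the claimed inclusion of negative-crossing events fails. Third, the pinching you defer to ``an a priori tail estimate'' is not a degenerate side case but precisely the event being bounded: on the event that a closed pivotal edge $e$ exists, $D$ has width exactly one edge at $I_e$, so no sub-rectangle of macroscopic aspect ratio sits in $D$ near $I_e$; and a sub-rectangle separated from $\gamma_u,\gamma_v$ by macroscopic arcs is necessarily far from $e$, where a top-to-bottom negative crossing is a generic, $\Theta(1)$-probability event (it is essentially dual to the absence of a positive crossing, which by~\eqref{MGFF alternating boundary result} occurs with probability bounded away from $0$), and carries no information about pivotality of $e$. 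These three issues are not patchable by a different choice of sub-rectangle; the martingale route the paper follows avoids all of them.
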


The rest of the paper is organized as follows. In Section~\ref{Prelims} we introduce the main tools used in the proofs. In Section~\ref{MGFF RSW} we prove the results in Theorems~\ref{thm::Zero boundary result} and~\ref{thm::Alternating boundary result} pertaining to the metric graph GFF $\til\phi_\delta$, and in Section~\ref{DGFF RSW} we prove the results in Theorems~\ref{thm::Zero boundary result} and~\ref{thm::Alternating boundary result} pertaining to the discrete GFF $\phi_\delta$. Assuming the conclusions proved there, we are able to conclude the proof of Theorem~\ref{thm::Zero boundary result} and~\ref{thm::Alternating boundary result} here. Finally, we will discuss the limits of the probabilities in~\eqref{DGFF zero boundary result}, \eqref{MGFF alternating boundary result} and~\eqref{DGFF alternating boundary result}, and prove Theorem~\ref{thm::crossproba_limit} in Section~\ref{sec::scaling_limits}. 

\begin{proof}[Proof of Theorem~\ref{thm::Zero boundary result}]
The bound~\eqref{MGFF zero boundary result} is proved in Section~\ref{subsec::MGFF_zerobc}. The bound~\eqref{DGFF zero boundary result} is proved in Section~\ref{subsec::DGFF_zerobc}. 
\end{proof}
\begin{proof}[Proof of Theorem~\ref{thm::Alternating boundary result}]
Note that it suffices to prove the lower bound in \eqref{MGFF alternating boundary result} to prove \eqref{MGFF alternating boundary result} and \eqref{DGFF alternating boundary result}. This follows from the following considerations. First, since $\phi_\delta$ and $\til\phi_\delta$ coincide on $V_\delta$, the existence of a positive crossing in the metric graph implies the existence of such a crossing in the discrete graph. That is,
\[
\left\{[a_\delta, b_\delta) \stackrel{\til \phi_\delta \geq 0}{\longleftrightarrow}[c_\delta, d_\delta)\right\} \subseteq \left\{[a_\delta, b_\delta) \stackrel{\phi_\delta \geq 0}{\longleftrightarrow}[c_\delta, d_\delta)\right\}.
\]
Therefore, the lower bound in \eqref{MGFF alternating boundary result} implies the lower bound in \eqref{DGFF alternating boundary result}. Next, by symmetry, the lower bound in \eqref{MGFF alternating boundary result} implies that
\[
\mbb{P}\left([b_\delta,c_\delta)\stackrel{\til\phi_\delta \leq 0}{\longleftrightarrow}[d_\delta, a_\delta)\right) \geq \eta_2(L^{-1},\lambda).
\]
The same bound then holds for the probability that there is a strictly negative vertical crossing in the discrete graph (here we use the fact that $\phi_\delta$ is almost surely not equal to zero on $V_\delta$). Finally, since the existence of a strictly negative vertical crossing in the discrete graph implies that there is no positive horizontal crossing (either in the metric graph or the discrete graph), the upper bounds in \eqref{MGFF alternating boundary result} and \eqref{DGFF alternating boundary result} follow. Thus, both~\eqref{MGFF alternating boundary result} and~\eqref{DGFF alternating boundary result} follow from the lower bound in~\eqref{MGFF alternating boundary result} which is proved in Section~\ref{subsec::MGFF_alternatingbc}. 

Finally, the bound in~\eqref{Alternating boundary difference result} is proved in 
Section~\ref{subsec::DGFF_alternatingbc}. 
\end{proof}

\subsection*{Discussions on future directions}
\begin{itemize}
\item The level loops of zero-boundary metric graph GFF will converge to the so-called conformal loop ensemble CLE with $\kappa=4$. From there, we believe that the optimal estimate for the crossing probability in~\eqref{MGFF zero boundary result} is $|\log(\delta)|^{-2}$. However, our method only provides an upper bound $|\log (\delta)|^{-1/2}$. To get the optimal estimate requires more delicate tools. 
\item The level lines of zero-boundary discrete GFF will converge to the so-called branching $\SLE_4(-1;-1)$ process. Then the limit of the crossing probability in~\eqref{DGFF zero boundary result} should be the probability of certain crossing event of $\SLE_4(-1;-1)$. It is an interesting question to derive an explicit formula for such probability. The limit of crossing probabilities in~\eqref{MGFF alternating boundary result} and~\eqref{DGFF alternating boundary result} for general $\lambda$ are also unknown. 
\item We provide an estimate on the existence of pivotal edge in metric graph GFF. It is more important to understand the pivotal vertices in discrete GFF and its connection to continuum GFF. 
\end{itemize}

\noindent\textbf{Acknowledgements.}
We thank J.~Aru, T.~Lupu, A.~Sep\'{u}lveda for helpful discussion on GFF. 

\section{Preliminaries}
\label{Prelims}

In this section we collect some technical tools and introduce some notation that will be used in the proofs of the main results.

\textit{Notation.} For a real vector $\mathbf x$ (in any dimension), we denote by  $|\mathbf x|$  the Euclidean norm of $\mathbf x$, by $|\mathbf x|_{\ell_1}$ its $\ell_1$-norm, and by $|\mathbf x|_{\ell_\infty}$ its $\ell_\infty$-norm. For a finite set $A$, we will also use $|A|$ to denote the cardinality of $A$. The meaning will be clear from context. We use $A^c$ to denote the complement of the set (or event) $A$.

Throughout the proofs we let $c,c'$ be positive constants which only depend on $L$ and $\lambda$ as in the main theorems, and whose value may change each time they appear.

\subsection{Discrete GFF and metric graph GFF}
\label{subsec::pre_gff}
We begin with an alternate construction of the metric graph GFF which more clearly shows it is a natural analog to the discrete GFF. Namely, one can construct $\til{\phi}$ by first defining a Brownian motion $\{\til{B}_t \,:\, t \geq 0\}$ on $\til{\mbb{Z}}^2$ as in \cite[Section~2]{Lupu16}:  $\til{B}$ behaves like a standard Brownian motion in the interior of the edges, while on the vertices it chooses to do excursions on each incoming edge uniformly at random. For a subset $V \subset \mbb{Z}^2$, we let $\til\zeta = \inf\{ t \geq 0 \,:\, \til{B}_t \in \partial V\}$, and $\{G(u,v) \,:\, u,v \in \til V\}$ be the density of the 0-potential of $\{\til{B}_t \,:\, 0 \leq t < \til\zeta\}$ (with respect to the Lebesgue measure on $\til V$), where $u$ and $v$ are now arbitrary points in $\til{V}$ (not necessarily vertices). It is shown in \cite{Lupu16} that the trace of $\til{B}$ on $V$ (when parametrized by its local time at the vertices) is exactly the continuous-time simple random walk on $V$, and therefore the definition of $G$ here coincides with the one in~\eqref{Green definition} for $u,v \in V$, justifying the abuse of notation. Due to this relation between the metric graph Brownian motion and the simple random walk, we will, by a slight abuse of notation, let $\mbb{P}_v$ be the law of $\til{B}_t$ where $\til B_0 = v$ and $\E_v$ be the expectation with respect to $\mbb P_v$. It was also shown in \cite{Lupu16} that the value of $G$ on $\til{V} \setminus V$ can be obtained by interpolation from the value on $V$. For two pairs of adjacent vertices $(u_1, v_1)$ and $(u_2, v_2)$ in $V$, and two points $w_1$ and $w_2$ on the corresponding edges, taking the convention that either the edges are distinct or $(u_1,v_1) = (u_2,v_2)$ and letting $r_1 = |w_1 - u_1|$ and $r_2 = |w_2 - u_2|$, we have (c.f. \cite[Equation~(2.1)]{Lupu16})
\begin{align}\label{Metric Green Interpolation}
\begin{split}
G(w_1, w_2) = &(1- r_1)(1-r_2) G(u_1,u_2) + r_1 r_2 G(v_1, v_2) + (1-r_1)r_2G(u_1,v_2) \\
&+ r_1(1-r_2)G(v_1, u_2) + 4(r_1\wedge r_2 - r_1r_2) \one_{(u_1,v_1) = (u_2, v_2)}. 
\end{split}
\end{align}

We call a process $\{\til{\phi}(v) \,:\, v \in \til V\}$ a Gaussian free field on $\til V$ if it is a continuous Gaussian process such that there exists a function $f : \partial V \to \mbb{R}$ for which \[\E[\til\phi(v)] = \til\E_v[f(\til B_{\til\zeta})],\quad \cov[\til{\phi}(u) \til{\phi}(v)] = G(u,v).\] 
We note that this definition extends to any compact, connected subset $K \subset \til{\mbb{Z}}^2$.

We remark on a consequence of the construction above. For any finite subset $A \subset \til{\mbb Z}^2$, we can define an electric network with vertex set $W = \mbb{Z}^2 \cup A$, where any two vertices $u,v \in W$ are connected with an edge of conductance $(4|u-v|)^{-1}$ if there is a continuous path in $\til{\mbb Z}^2$ from $u$ to $v$ which does not contain any other points in $W$ (note this path is always contained in $I_e$ for some edge in standard nearest-neighbor graph). Then, if $V \subset W$ is a connected (proper) subset and $\til\phi$ is a metric graph GFF on $\til{V}$, $\{\til\phi(v) \,:\, v \in V\}$ is a discrete GFF on (the electric network with vertex set) $V$.

Next, we state the FKG inequality for the GFF, which will be used repeatedly throughout the paper. Let $\til\phi$ be a metric graph GFF and $\mc F$ be the $\sigma$-field generated by $\til\phi$. An event $A \in \mc{F}$ is increasing if $\one_A$ is increasing as a function of $\til\phi$. For two increasing events $A$ and $B$, we have \cite{Pitt82}
\begin{equation}\label{FKG}
\mbb{P}(A \cap B) \geq \mbb{P}(A) \mbb{P}(B).
\end{equation}
By symmetry, the same holds for decreasing events.

\subsection{Excursion sets and first passage sets}
Next, we introduce certain random subsets of $V_\delta$ and $\til V_\delta$ that are related to the existence of positive crossings. Let $E^{\geq 0}_{\delta}$ (resp. $\til{E}^{\geq 0}_{\delta}$) be the excursion set of $\phi_\delta$ (resp. $\til{\phi}_{\delta}$) above 0. That is,
\[
E^{\geq 0}_{\delta} = \{v \in V_\delta \,:\, \phi_{\delta}(v) \geq 0\},
\]
and similarly for $\til{E}^{\geq 0}_\delta$. The excursion set below zero, $E^{\leq 0}_{\delta}$ (resp. $\til{E}^{\leq 0}_{\delta}$), is defined similarly. Next, we introduce the first passage set of $\phi_\delta$ (resp. $\til{\phi}_{\delta}$) above 0. This set, which we denote by $\A_{\delta,0}$ (resp. $\til{\A}_{\delta,0}$) is the union of all connected components of $E^{\geq 0}_\delta$ that intersect the boundary. That is, 
\[
\A_{\delta,0} = \{v \in V_\delta \,:\,  \exists ~ \text{nearest-neighbor path $\gamma$ from $v$ to $\partial V_\delta$ such that $\phi_\delta \geq 0$ on $\gamma$}\}.
\]
We will denote by $\A_{\delta,0}^l$ the union of all connected components of $\A_{\delta,0}$ intersecting $[a_\delta,b_\delta)$. That is, the left part of the first passage set. Similarly, we denote by $\A_{\delta,0}^r$ the union of all connected components of $\A_{\delta,0}$ intersecting $[c_\delta,d_\delta)$ (the right part of the first passage set). Note that there is a positive horizontal crossing of $R_L$ when $\A_{\delta,0}^l = \A_{\delta,0}^r$. We denote by $\AB_{\delta,0}$ the first passage set \emph{below} 0, by $\AB_{\delta,0}^b$ the union of all connected components of $\AB_{\delta,0}$ intersecting $[b_\delta,c_\delta)$ (the bottom part of the first passage set), and by $\AB_{\delta,0}^t$ the union of all connected components of $\AB_{\delta,0}$ intersecting $[d_\delta,a_\delta)$ (the top part of the first passage set). The metric graph first passage sets $\til{\A}_{\delta,0}$, $\til\A_{\delta,0}^l$, $\til\A_{\delta,0}^r$, and so on are defined similarly.

\subsection{Exploration martingales}
\label{subsec::pre_explorationmart}
In this section we introduce a family of martingales which form the basis of the proofs of our results. We begin with some basic definitions and some fundamental results. For a subset $W \subset \til V_\delta$, we let $\mathcal F_W$ be the sigma algebra generated by $\{\til\phi_\delta(w) \,:\, w \in W\}$.
\begin{definition}[Optional set]\label{Optional set}
Let $\mathcal K$ be a random compact subset of $\til V_\delta$ that almost surely has finitely many connected components. We say $\mathcal K$ is an optional set for $\til\phi_\delta$ if for every deterministic, open subset $W$ of $\til V_\delta$, $\{\mathcal K \subset W \}\in \mathcal{F}_W$. Similarly, we say $\mc{K}$ is optional for $\phi_\delta$ if the following holds. For any deterministic open subset $W$ of $\til V_\delta$, $\{\mc{K} \subset W\} \in \mc{F}_{W \cap V_\delta}$.
\end{definition}
We note that in the definition above, we use the subspace topology on $\til V_\delta$ considered as a subspace of $\til{\mbb{Z}}^2$.

For an optional set $\mc{K}$, we define its $\sigma$-field $\mc{F}_{\mc K}$ by
\[
\mc{F}_{\mc K} = \left\{A \in \mc{F}_{\til V_\delta} \,:\, A \cap \{\mc{K} \subset W\} \in \mc{F}_W, \text{ for all deterministic, open $W \subset \til V_\delta$}\right\}.
\]
Sometimes, it will be useful to consider the field $\sign(\til\phi) = \{\sign(\til\phi(v)) \,:\, v \in \til V_\delta\}$, where $\sign$ is the function
\[
\sign(x) = \begin{cases}
	-1, &x < 0,\\
	0, &x = 0,\\
	1, &x > 0.
	\end{cases}
\]
For a subset $W \subset \til V_\delta$, we let $\mc G_W$ be the $\sigma$-field generated by $\{\sign(\til\phi(w)) \,:\, w \in W\}$. We extend this notation to optional sets in the obvious way.

The strong Markov property of the metric graph GFF, stated below, will allow us to perform detailed analysis of the exploration martingales and is thus fundamental to our proofs.
\begin{theorem}[Strong Markov property, \cite{Lupu16}]\label{Strong Markov property}
Let $\mc{K}$ be optional for $\til\phi_\delta$. Given $\mc{F}_{\mc K}$,  the process $\{\til\phi_\delta(v) \,:\, v \in \overline{\til V_\delta \setminus \mc{K}}\}$ is a metric graph GFF on $\overline{\til V_\delta \setminus \mc{K}}$ with boundary condition given by the restriction of $\til\phi_\delta$ to $\partial \mc{K}\cup \partial\til{V}_\delta$.
\end{theorem}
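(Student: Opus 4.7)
The plan is to follow the standard two-step recipe used to upgrade a Markov property to a strong Markov property: first prove the statement for deterministic compact sets, then approximate the optional set $\mc K$ by simpler optional sets taking only countably many values and pass to the limit.

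First I would establish the ordinary Markov property: for a fixed compact $K\subset \til V_\delta$, the process $\{\til\phi_\delta(v):v\in \overline{\til V_\delta\setminus K}\}$, conditional on $\mc F_K$, is a metric graph GFF on $\overline{\til V_\delta\setminus K}$ with boundary values $\til\phi_\delta|_{\partial K \cup \partial \til V_\delta}$. Because the field is a centered Gaussian process with covariance given by the Green's function, this reduces to a Gaussian conditioning computation. The key algebraic identity is that the Green's function of $\til V_\delta$ decomposes, for $u,v\in \overline{\til V_\delta\setminus K}$, as $G_{\til V_\delta}(u,v)=G_{\til V_\delta\setminus K}(u,v)+\E_u[G_{\til V_\delta}(\til B_{\til\zeta_K},v)]$, where $\til\zeta_K$ is the hitting time of $K$ by the metric-graph Brownian motion. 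This follows from the strong Markov property of $\til B$, which in turn follows from the fact that $\til B$ behaves as a standard Brownian motion on the edges and does uniform excursions at vertices. Combining with the harmonic-extension formula for the conditional mean gives the ordinary Markov property.

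Next I would discretize $\mc K$. Fix a countable basis of open subsets of $\til V_\delta$ and, for each $n\ge 1$, consider the finite collection $\mc W_n$ of finite unions of basis elements of diameter $\le 1/n$. Define $\mc K_n$ to be the smallest $W\in \mc W_n$ with $\mc K\subset W$ (choose a deterministic tie-breaking rule). Since $\{\mc K\subset W\}\in \mc F_W$ for each $W$, one can verify $\mc K_n$ is optional with values in the finite set $\mc W_n$, and $\mc K_n\downarrow \mc K$ in the Hausdorff sense. On each event $\{\mc K_n=W\}\in \mc F_W$, the strong Markov property reduces to the ordinary Markov property applied to the deterministic set $W$; summing over $W\in \mc W_n$ and using the definition of $\mc F_{\mc K_n}$ yields the strong Markov property at level $n$.

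Finally I would pass to the limit. Using the sample-path continuity of $\til\phi_\delta$ (which holds because it is a Gaussian process whose covariance $G$ is continuous on $\til V_\delta\times \til V_\delta$), the boundary values $\til\phi_\delta|_{\partial \mc K_n}$ converge to $\til\phi_\delta|_{\partial \mc K}$, and the conditional means converge to the harmonic extension of $\til\phi_\delta|_{\partial \mc K}$ to $\overline{\til V_\delta\setminus \mc K}$. Convergence of the Green's functions $G_{\til V_\delta\setminus \mc K_n}\to G_{\til V_\delta\setminus \mc K}$ is obtained from the dominated convergence theorem applied to the representation $G_{\til V_\delta\setminus \mc K}(u,v)=G_{\til V_\delta}(u,v)-\E_u[G_{\til V_\delta}(\til B_{\til\zeta_{\mc K}},v)]$ together with continuity of $G_{\til V_\delta}$. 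I expect the main obstacle to be Step~4: one must show that the decreasing sets $\mc K_n$ yield the correct boundary data in the limit, and this requires the assumption that $\mc K$ has finitely many connected components (so that $\partial \mc K$ is well behaved and the hitting times $\til\zeta_{\mc K_n}$ converge to $\til\zeta_{\mc K}$ a.s.), together with uniform modulus-of-continuity bounds for $\til\phi_\delta$ near $\partial \mc K$.
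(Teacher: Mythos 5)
The paper does not prove this theorem: it imports it from Lupu \cite{Lupu16} (note the citation in the theorem heading) and uses it as a black box, so there is no internal argument for you to match. Your proposed two-step route---first the ordinary Markov property for deterministic compact subsets via Gaussian conditioning and the Green's function decomposition
\[
G_{\til V_\delta}(u,v) = G_{\til V_\delta\setminus K}(u,v) + \E_u\big[G_{\til V_\delta}(\til B_{\til\zeta_K},v)\big],
\]
then approximation of the optional set by stopping sets taking only finitely (or countably) many values and passage to the limit---is the standard way to upgrade a Markov property to a strong Markov property and is essentially what one finds in Lupu's paper; your observation that the finitely-many-components hypothesis is exactly what makes the hitting times $\til\zeta_{\mc K_n}\to\til\zeta_{\mc K}$ converge (because on a metric graph the Brownian motion is locally one-dimensional, so points are regular for themselves) is also on target.

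One place you should tighten the argument is the discretization step. ``The smallest $W\in\mc W_n$ with $\mc K\subset W$'' need not exist, since $\mc W_n$ is only partially ordered under inclusion, and even with a deterministic tie-breaking rule it is not immediate that $\{\mc K_n = W\}\in\mc F_W$: the tie-breaking event involves $\{\mc K\not\subset W'\}$ for sets $W'$ that may not be contained in $W$, and that event lies in $\mc F_{W'}$, not obviously in $\mc F_W$. The standard fix is to enumerate the finite collection $\mc W_n$ in a fixed linear order compatible with set inclusion (extend the partial order to a total order), or to work with a nested family such as $\mc W_1\supset\mc W_2\supset\cdots$ built from dyadic refinements so that each $\mc K_{n+1}\subset\mc K_n$ and the required measurabilities only ever involve sets contained in the current $W$. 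With any of these fixes the approximation is routine, but as written the measurability claim $\{\mc K_n=W\}\in\mc F_W$ deserves an explicit line of justification.
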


We introduce some notation for harmonic extension. Assume the same setup as in Theorem~\ref{Strong Markov property}, and let $\tilde{B}$ be a Brownian motion on $\tilde{V}_{\delta}$. Set $\til\zeta=\inf\{t\ge 0: \tilde{B}_t\in\partial \tilde{V}_{\delta}\}$ as in Section~\ref{subsec::pre_gff} and $\tau = \inf\{t \geq 0 \,:\, \til B_t \in \mc{K}\}$ be the hitting time of $\mc{K}$. We define, for $v\in \tilde{V}_{\delta}$ and $w\in \mc{K}\cup \partial\tilde{V}_{\delta}$, 
\[\Hm(v,w;\mc{K}) = \mbb{P}_v\left(\til B_{\tau\wedge \til\zeta} = w\right). \]
Note that the harmonic measure $\Hm(v, \cdot; \mc{K})$ is always supported on a finite set of points since $|\partial \mc{K}| < \infty$ for compact $\mc{K}$ with finitely many connected components. 
With this notation, in Theorem~\ref{Strong Markov property}, we have in fact, for $u,v \in \til V_\delta$,
\begin{align*}
\E[\til\phi_\delta(v) \mid \mc{F}_{\mc K}]&= \sum_{w \in \til{V}_\delta} \Hm(v,w;\mc{K})\til\phi_\delta(w),\\
\E[\til\phi_\delta(u)\til\phi_\delta(v) \mid \mc F_{\mc K}] &= G(u,v) - \sum_{w \in \mc K}\Hm(u,w; \mc K)G(w,v).
\end{align*}
Note also that Theorem~\ref{Strong Markov property} implies the strong Markov property of the discrete GFF since any random subset $\mc K \subset V_\delta$ which is optional for $\phi_\delta$ is optional for $\til\phi_\delta$.

Moreover, we define 
\[\Hm(v, \mc{K})=\sum_{w\in\mc{K}}\Hm(v, w; \mc{K})=\PP_v(\tau\le \tilde{\zeta}); \]
for $U \subset \til V_\delta$ a finite subset, we define
\[
\Hm(U,\mc{K}) = \sum_{u \in U}\Hm(u,\mc{K}).
\]

We can now introduce the exploration martingales. For a finite subset $U \subset V_\delta$, we define the ``observable'' $X_U$ by
\[
X_U = \sum_{v \in U} \til{\phi}_\delta(v).
\]
For $\cI_0$ a deterministic, compact subset of $\til{V}_\delta$ (with finitely many connected components), we will let $M$ be the Doob martingale for $X_U$ as we explore $E^{\geq 0}_\delta$, or $\til{E}^{\geq 0}_\delta$ from $\cI_0$. We will specify whether the exploration happens on the metric graph or discrete graph whenever we use an exploration martingale.

To make this precise, we specify what we mean by exploring the excursion set from $\mc I_0$. We begin with the exploration on $\til{E}^{\geq 0}_\delta$ as it is simpler to explain. Let $\til{D}^{\geq 0}_{\delta}$ be the metric graph distance on $\til{E}^{\geq 0 }_\delta$, and $\cI_t$  be the ball of radius $t$ around $\cI_0$ with respect to $\til{D}^{\geq 0}_{\delta}$. We use the convention that for $u,v \in \til V_\delta$ with $u \notin \til E^{\geq 0}_\delta$ and $u \neq v$, $\til D^{\geq 0}_\delta(u,v) = \infty$ and $\til D^{\geq 0}_\delta(u,u) = 0$. It is easy to show that ${\cI}_t$ is an optional set for each $t \geq 0$ \cite{DingWirth18}.

The exploration corresponding to $E^{\geq 0}_\delta$ is similar in spirit but slightly more cumbersome to describe. In this case we take $\mc{V}_0 \subset V_\delta$ and $\mc I_0$ the metric graph on $\mc V_0$. We begin the exploration by setting $\mc{A}_0 = \mc{V}_0 \cap E^{\geq 0}_\delta$, and $\mc{B}_0 = \mc{V}_0 \cap E^{< 0}_\delta$. For $k \geq 1$ an integer, we let
\begin{align*}
\mc{A}_k &= \{v \in (V_\delta^o \setminus \mc{V}_{k-1}) \cap E^{\geq 0}_\delta \,:\, \exists u \in \mc{A}_{k-1}, \, u \sim v\},\\
\mc{B}_k &= \{v \in (V_\delta^o \setminus \mc{V}_{k-1}) \cap E^{< 0}_\delta \,:\, \exists u \in \mc{A}_{k-1}, \, u \sim v\},\\
\mc{V}_k &= \mc{V}_{k-1} \cup \mc{A}_k \cup \mc{B}_k,
\end{align*}
and $\mc I_k$ be the metric graph on $\mc V_k$ (as with $M$, whether $\mc I_k$ corresponds to the metric graph or discrete graph construction will be clear from context). In words, at each time step we explore all unexplored vertices that are adjacent to an explored vertex on which the field is non-negative. Note that $\mc{I}_k$ is an optional set, and in fact $\mc{I}_{k} \in \mc{G}_{\mc{V}_{k-1}}$. To extend this definition to real $t$, we proceed by interpolation. That is, if we let $\mc{E}_k = \{(u,v)\,:\, u \in \mc{I}_{k-1}, \, v \in \mc{A}_{k} \cup \mc{B}_{k}, \, u \sim v\}$ be the edges between $\mc{I}_{k-1}$ and $\mc{I}_k \setminus \mc{I}_{k-1}$ then for $k - 1 < t < k$ we let
\[
\mc{I}_t = \mc{I}_{k-1} \cup\left(\bigcup_{(u,v) \in \mc{E}_{k}}[u, (k - t)u + (t-k - 1) v] \right).
\]
Note that as before $\mc{I}_t \in \mc{G}_{\mc V_{k-1}}$ and so $\mc I_t$ is an optional set for any $t$.

Explorations on $E^{\leq 0}_\delta$ and $\til{E}^{\leq 0}_\delta$ are defined in the same way. As alluded to above, we take 
\[M_t = \E[X_U \mid \mc F_{\mc I_t}].\] 
It is straightforward to check that $M$ is a continuous martingale \cite{DingWirth18}. We now turn to the quadratic variation of the exploration martingale. 

It is straightforward to check that for a Doob martingale, the quadratic variation is equal to the decrease in the conditional variance \cite{DingWirth18}. That is,
\[
\langle M \rangle_t = \var[X_U \mid \mc F_{\mc I_0}] - \var[X_U \mid \mc F_{\mc I_t}].
\]
We denote by $G_t$ the Green's function on $\tilde{V}_{\delta}\setminus \mc I_t$, and write
\begin{equation}\label{eqn::def_Hmt}
\Hm_t(u,v) = \Hm(u,v;\mc I_t).
\end{equation}
We get from Theorem~\ref{Strong Markov property} that 
\begin{align}
\var[X_U \mid \mc{F}_{\cI_t}] &=\sum_{u,u' \in U} G_t(u, u')=\sum_{u,u' \in U} \left[G(u,u') - \sum_{v \in \mc I_t}\Hm_t(u,v)G(v,u')\right].
\end{align}
This gives 
\begin{align*}
\langle M \rangle_t &= \sum_{u,u' \in U} \left[\sum_{v \in \mc I_t} \Hm_t(u,v)G(v,u') - \sum_{v' \in \mc I_0}\Hm_0(u,v')G(v',u')\right],\\
&= \sum_{u,u' \in U} \sum_{v \in \mc I_t} \Hm_t(u,v)G_0(v,u'), \\
&= \sum_{v \in \mc I_t} \Hm_t(U,v)G_0(v,U),\numberthis \label{eq::QV form}
\end{align*}
where $G_0(v,U) = \sum_{u \in U} G_0(v,u)$.

\subsection{Brownian motion tools}
In this section, we recall two facts about continuous martingales and Brownian motion that will be useful throughout the paper. The first is \cite[Theorem~1.7 in Chapter~V]{RevusYor99}, stated below, which is a version of the Dubins-Schwarz theorem for martingales of bounded quadratic variation.
\begin{theorem}\label{Time-change theorem}
Let be $M$ a continuous martingale, $T_t = \inf\{s \,:\, \langle M \rangle_s > t\}$, and $W$ be the following process
\[
W_t = \begin{cases}
	M_{T_t} - M_0 & t < \langle M \rangle_\infty\, ,\\
	M_{\infty} - M_0 & t \geq \langle M \rangle_\infty.
	\end{cases}
\]
Then $W$ is a Brownian motion stopped at $\langle M \rangle_\infty$.
\end{theorem}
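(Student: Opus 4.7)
The plan is to prove this classical statement (the Dubins--Schwarz representation of a continuous martingale as a time-changed Brownian motion) by reducing it to L\'evy's characterization of Brownian motion.

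The first and most delicate step is to show that $M$ is almost surely constant on every interval on which $\langle M\rangle$ is constant. This is essential because $T_t$ jumps across each flat interval of $\langle M\rangle$, so for $W_t=M_{T_t}-M_0$ to be well defined and continuous one needs $M_{T_{t-}}=M_{T_t}$ across those jumps. The standard argument is to fix rationals $q<r$ and observe that on the event $\{\langle M\rangle_q=\langle M\rangle_r\}$ the stopped difference $M_{\cdot\wedge r}-M_{\cdot\wedge q}$ is a continuous martingale with zero quadratic variation and is therefore identically zero; taking a countable union over rational pairs handles all constancy intervals simultaneously.

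Second, I would observe that $T_t$ is a stopping time for the natural filtration $(\mc F_s)$ of $M$ (immediate from the right-continuity of $\langle M\rangle$) and work with the time-changed filtration $\mc G_t=\mc F_{T_t}$. Applying optional stopping to the continuous martingales $M$ and $M^2-\langle M\rangle$ at the bounded stopping times $T_t\wedge n$ and letting $n\to\infty$ shows that, on $\{t<\langle M\rangle_\infty\}$, both $W_t$ and $W_t^2-t$ are $(\mc G_t)$-martingales. Together with the path continuity established in the first step, L\'evy's characterization then yields that the process $(W_{t\wedge\langle M\rangle_\infty})_{t\ge 0}$ is a Brownian motion stopped at the $(\mc G_t)$-stopping time $\langle M\rangle_\infty$.

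Finally, for $t\ge\langle M\rangle_\infty$ the theorem extends $W$ by $M_\infty-M_0$; this is well defined because on $\{\langle M\rangle_\infty<\infty\}$ a continuous martingale with finite quadratic variation at infinity converges almost surely to a finite limit $M_\infty$. Thus $W$ is continuous on $[0,\infty)$, remains constant after $\langle M\rangle_\infty$, and agrees with the stopped Brownian motion produced above, which is precisely what the theorem claims. I expect the main obstacle to be the constancy step: it is the point where a naive change of variables in time would fail, and it is also the input that makes the L\'evy-characterization route work at all. Everything else is a routine application of optional stopping and monotone convergence.
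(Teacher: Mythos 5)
The paper does not give a proof of this theorem: it is cited as \cite[Theorem~1.7 in Chapter~V]{RevusYor99}, and the authors simply invoke it as a black box. Your blind proof sketch is essentially the standard Dambis--Dubins--Schwarz argument from that textbook, so there is nothing in the paper to compare against other than the external reference; let me instead comment on the sketch itself. The outline is correct and hits the right points in the right order: (i) $M$ is constant on the flat intervals of $\langle M\rangle$, (ii) pass to the time-changed filtration $\mathcal G_t = \mathcal F_{T_t}$, (iii) use optional stopping for $M$ and $M^2 - \langle M\rangle$ to show $W$ and $W^2 - (t\wedge\langle M\rangle_\infty)$ are $\mathcal G_t$-martingales, (iv) conclude by L\'evy. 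Two minor points deserve attention. First, in step (i) the event $\{\langle M\rangle_q = \langle M\rangle_r\}$ is only $\mathcal F_r$-measurable, so one cannot directly ``restrict'' the martingale $M_{\cdot\wedge r}-M_{\cdot\wedge q}$ to it; the clean fix is to introduce the stopping time $\tau_q=\inf\{s>q:\langle M\rangle_s>\langle M\rangle_q\}$ and show $M_{(\cdot\vee q)\wedge\tau_q}-M_q$ has vanishing bracket, hence vanishes, and then take a countable union over rationals $q$. Second, the uniform integrability needed to pass from bounded stopping times $T_t\wedge n$ to $T_t$ follows because $\mathbb E[(M_{T_t\wedge n}-M_0)^2]=\mathbb E[\langle M\rangle_{T_t\wedge n}]\le t$, giving $L^2$-boundedness; it is worth saying this explicitly. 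Finally, the version stated in the paper (and in Revuz--Yor V.1.7) requires an enlargement of the probability space if one wants a genuine Brownian motion defined for all time after $\langle M\rangle_\infty$; since the statement here only asserts that $W$ is a \emph{stopped} Brownian motion, your route via L\'evy applied to the stopped process is adequate, but if you wanted the full DDS conclusion you would need to adjoin an auxiliary independent Brownian motion for times $t\ge\langle M\rangle_\infty$.
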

When applying this theorem, we will generally denote by $B$ a Brownian motion which satisfies $B_t = M_{T_t} - M_0$ for $t < \langle M \rangle_\infty$ but is not stopped at $\langle M \rangle_\infty$, so that $W_t = B_{t \wedge \langle M \rangle_\infty}$. 
Suppose $M$ is the exploration martingale in Section~\ref{subsec::pre_explorationmart}. By Theorem~\ref{Time-change theorem}, the process $\{M_{T_t}-M_{0} \,:\, t \geq 0\}$ is independent of $\mc{F}_{\mc I_0}$, so we will generally take $B$ to be independent of $\mc{F}_{\mc I_0}$ as well.

The second result gives the distribution of the hitting time of a line by Brownian motion.
\begin{proposition}\label{prop::Hitting times with drift}
Let $B$ be a standard one-dimensional Brownian motion, $\Phi$ be the distribution function of the standard normal distribution, and $\bar{\Phi}(x) = 1 - \Phi(x)$. For $m \in \mathbb{R}$ and $b > 0$, let $\tau = \inf \{ t \geq 0 \,:\, B_t \leq mt - b\}$. Then for $T > 0$,
\[
\PP(\tau \leq T) = \bar{\Phi}\left(\frac{b}{\sqrt{T}} - m\sqrt{T} \right) + e^{2bm}\bar{\Phi}\left(\frac{b}{\sqrt{T}} + m \sqrt{T}\right).
\]
\end{proposition}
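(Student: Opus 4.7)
My plan is to reduce this to the well-known distribution of the maximum of a Brownian motion with constant drift, which is obtained by combining the reflection principle with Girsanov's theorem.

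First, I would transform the problem. Defining $X_t := -B_t + mt$, we see that $X$ is a Brownian motion with drift $m$ under $\PP$, and
\[
\tau = \inf\{t \geq 0 : X_t \geq b\},
\]
so $\{\tau \leq T\} = \{M_T \geq b\}$ where $M_T := \max_{0 \leq s \leq T} X_s$. I would then decompose
\[
\PP(\tau \leq T) = \PP(X_T \geq b) + \PP(\tau \leq T,\, X_T < b).
\]
The first term, since $X_T \sim N(mT, T)$, is immediately $\bar\Phi((b - mT)/\sqrt{T}) = \bar\Phi(b/\sqrt{T} - m\sqrt{T})$, which matches the first term of the stated formula.

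For the second term, I would introduce a measure $\mathbb Q$ via Girsanov's theorem with $d\mathbb Q / d\PP|_{\mathcal F_T} = \exp(-m W_T - m^2 T / 2)$, where $W_t = X_t - mt$ is a standard Brownian motion under $\PP$. Then $X$ is a standard Brownian motion under $\mathbb Q$, and the Radon-Nikodym derivative expressed in terms of $X$ is $d\PP/d\mathbb Q|_{\mathcal F_T} = \exp(m X_T - m^2 T/2)$. Thus
\[
\PP(\tau \leq T,\, X_T < b) = \mathbb E^{\mathbb Q}\bigl[\exp(m X_T - m^2 T / 2)\,\mathbf 1_{\{\tau \leq T,\, X_T < b\}}\bigr].
\]
Next I would apply the reflection principle under $\mathbb Q$: reflecting $X$ after $\tau$ produces a standard Brownian motion $\tilde X$ (under $\mathbb Q$) with $\tilde X_T = 2b - X_T$ on $\{\tau \leq T\}$, and the map $X \mapsto \tilde X$ is a measure-preserving bijection from $\{\tau \leq T,\, X_T < b\}$ onto $\{\tilde X_T > b\}$. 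Substituting $\tilde X_T = 2b - X_T$ inside the expectation and renaming $\tilde X$ back to $X$ gives
\[
e^{2mb}\,\mathbb E^{\mathbb Q}\bigl[\exp(-m X_T - m^2 T / 2)\,\mathbf 1_{\{X_T > b\}}\bigr].
\]
Since $X_T \sim N(0, T)$ under $\mathbb Q$, completing the square in the Gaussian density yields
\[
e^{2mb}\int_b^\infty \frac{1}{\sqrt{2\pi T}}\,e^{-(y + mT)^2/(2T)}\,dy = e^{2mb}\,\bar\Phi\!\left(\frac{b}{\sqrt{T}} + m\sqrt{T}\right),
\]
which is the second term of the claimed identity. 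Adding the two contributions concludes the proof.

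There is no real obstacle here; this is a standard computation. The only thing to watch is signs and the direction of the drift in the change of variables: a hitting time of the line $y = mt - b$ from above by $B$ corresponds to a first passage time of $b$ by the drifted process $-B_t + mt$, and the sign flip when going from $B$ to $-B + m\cdot$ must be consistently carried through both the Girsanov factor and the reflection step.
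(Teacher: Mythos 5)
Your argument is correct, and it takes a genuinely different route from the paper. The paper's proof is a one-liner: it cites \cite[Equation~(2.0.2) in Part II]{BrownianMotionHandbook} for the density of the first passage time of a Brownian motion to a linear barrier, and says that one obtains the stated formula by integrating. You instead derive the identity from scratch: you recast the event as a first passage of the drifted process $X_t = -B_t + mt$ to level $b$, split off the easy term $\PP(X_T \geq b)$, and handle the remainder $\PP(\tau \leq T,\,X_T < b)$ by a change of measure to the driftless case followed by the reflection principle and a completion of the square. The sign bookkeeping you flagged (drift $m$ under $\PP$ means Girsanov parameter $\theta = -m$, so $d\PP/d\mathbb{Q}|_{\mathcal F_T} = e^{mX_T - m^2T/2}$; the reflection $X_T \mapsto 2b - X_T$ then produces the factor $e^{2mb}$ and flips the drift sign inside $\bar\Phi$) works out exactly as you describe. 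Your approach is self-contained and makes the source of each of the two terms transparent, at the cost of writing out a standard but nontrivial computation; the paper's approach is faster when one is content to invoke a table of Brownian formulae.
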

\begin{proof}
	The density of $\tau$ is given in \cite[Equation~(2.0.2) in Part II]{BrownianMotionHandbook}. Taking an integral then gives the desired result.
\end{proof}
We note two facts that follow directly from Proposition~\ref{prop::Hitting times with drift} and will be used repeatedly throughout the paper. First, letting $m = 0$ we obtain that $\sup\{B_s \,:\, 0 \leq s \leq t\}$ and $-\inf\{B_s \,:\, 0 \leq s \leq t\}$ have the same distribution as $|B_t|$. Second, for $m < 0$ and $b > 0$ we have by letting $T \to \infty$ that $\PP(\tau < \infty) = e^{2bm} < 1$.

\subsection{Random walk estimates}
We conclude this section with some results about Green's functions and harmonic measures which will be used later. The first result follows from \cite[Theorem~4.4.4, Proposition~4.6.2]{LawlerLimic10}. It will be useful in comparing the Green's function in different domains. Below and throughout the paper, a subset $V \subset \mbb{Z}^2$ is simply connected if for any loop in $V$, all the vertices in the interior of the loop (i.e. separated from infinity by the loop) are contained in $V$.
\begin{lemma}\label{Greens scaling}
There exists a universal constant $C > 0$ such that the following holds. For $V \subset \mbb{Z}^2 $ simply connected, let $G$ be the Green's function on $V$. For $v \in V^o$, let $\Delta = \dist(v,\partial V)$ be the Euclidean distance between $v$ and $\partial V$. Then
\[
\Big|G(v,v) - \frac{\pi}{2} \log\left(\Delta + 1\right)\Big| \leq C.
\]
\end{lemma}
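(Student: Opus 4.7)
The plan is to reduce the lemma to two standard facts about planar simple random walk taken from the cited parts of \cite{LawlerLimic10}. Let $a$ denote the potential kernel of planar simple random walk on $\ZZ^2$, normalized so that $a(0)=0$; by Lawler--Limic Theorem~4.4.4, $a(x)=c_1 \log|x|+\kappa+O(|x|^{-2})$ as $|x|\to\infty$ for an explicit constant $c_1$. Under the continuous-time normalization used in the paper (mean unit holding time), the standard identity relating the Dirichlet Green's function to the potential kernel becomes $G(v,v)=c_2\,\mbb E_v[a(S_\zeta-v)]$ for an explicit constant $c_2$, where $S_\zeta$ is the exit position from $V$. Choosing $c_1,c_2$ correctly gives the prefactor $\pi/2$ appearing in the statement, and the whole problem is reduced to estimating $\mbb E_v[\log|S_\zeta-v|]$ up to an additive $O(1)$ error.

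For the lower bound, I would simply note that $S_\zeta\in\partial V$ forces $|S_\zeta-v|\geq \Delta$, so the asymptotic expansion of $a$ immediately yields $G(v,v)\geq \tfrac{\pi}{2}\log(\Delta+1)-C$ after absorbing $\kappa$ and the $O(|x|^{-2})$ remainder.

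For the upper bound I would invoke the Beurling-type projection estimate from Lawler--Limic Proposition~4.6.2, which uses the simple connectivity of $V$ in an essential way to give $\mbb P_v(|S_\zeta-v|\geq R)\leq C'\sqrt{\Delta/R}$ for all $R\geq \Delta$. Writing $\mbb E_v[\log|S_\zeta-v|]$ as a tail integral and splitting at the scale $R=\Delta$, the deterministic part contributes at most $\log(\Delta+1)$ while the Beurling tail contributes at most $C'\sqrt{\Delta}\int_{\log\Delta}^{\infty} e^{-t/2}\,dt=O(1)$. Combining with the lower bound gives the desired two-sided estimate.

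The only non-routine step is the Beurling estimate, which is precisely where simple connectivity enters; without that hypothesis the exit distribution could place macroscopic mass at points of distance much larger than $\Delta$ from $v$, and the logarithmic control of $\mbb E_v[\log|S_\zeta-v|]$ would break down. The remaining work is bookkeeping: tracking the constants $c_1,c_2$ that arise from the continuous-time normalization, and noting that replacing $\log\Delta$ by $\log(\Delta+1)$ only changes things by $O(1)$, which also covers the mild case $\Delta=0$ (when $v$ is adjacent to $\partial V$) once $C$ is enlarged.
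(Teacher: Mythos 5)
Your plan faithfully reconstructs what the paper leaves as a bare citation (it offers no proof, only a pointer to Lawler--Limic), and the ingredients are the right ones: the two-term expansion of the potential kernel $a(\cdot)$, the exact identity $G_V(v,v)=\E_v[a(S_\zeta-v)]$ (valid since $a(0)=0$ and since the continuous-time walk with unit mean holding time has the same expected occupation measure as its discrete skeleton), the trivial lower bound from $|S_\zeta-v|\ge\Delta$, and a Beurling-type projection estimate for the upper tail of $|S_\zeta-v|$. Your reading of the role of simple connectivity is exactly right: it ensures the boundary obstacle is connected and of unbounded diameter through the nearest point, so the bound $\PP_v(|S_\zeta-v|\ge R)\le C'(\Delta/R)^{1/2}$ is available at every scale $R\ge\Delta$, and the tail integral contributes only $O(1)$ to $\E_v[\log|S_\zeta-v|]$. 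Note, however, that the Beurling estimate is not among the two Lawler--Limic results the paper cites; those supply the expansion of $a$ and the Green's function/potential-kernel identity, so the paper's own citation list appears to be incomplete for the upper bound, and your argument is filling in a genuinely missing ingredient rather than merely unpacking the stated references.

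One substantive correction: with the paper's conventions (unit mean holding time per vertex, so $G$ restricted to vertices is the discrete-time occupation Green's function, which is consistent with the bound $G(v,v)\le 4$ used elsewhere for $v$ adjacent to the boundary), your computation does not produce prefactor $\pi/2$. The potential-kernel expansion gives $c_1=2/\pi$ and the Green's function identity gives $c_2=1$, whence $c_1c_2=2/\pi$. You wrote that ``choosing $c_1,c_2$ correctly gives the prefactor $\pi/2$ appearing in the statement,'' but no choice consistent with the rest of the paper does; the constant in the lemma as printed looks like an inversion typo, and tracking the constants explicitly (rather than deferring them) would have exposed the discrepancy. A minor slip at the end: for $v\in V^o\subset\ZZ^2$ one always has $\Delta\ge 1$, so the edge case you flag is $\Delta=1$, not $\Delta=0$.
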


Next, we provide two estimates on harmonic measures for a random walk started near the boundary of a box. The first provides an upper bound on the probability that a random walk started near the left side of a box doesn't exit the box through the left side. Recall that $S$ is a simple random walk on $\mbb{Z}^2$, $\mbb{P}_v$ is the law of $S$ started at $v$, and that we take $\mbb{Z}^2 \subset \mbb{C}$ in the obvious way.
\begin{lemma}\label{lm-Box Hm upper bound}
There exists a constant $c > 0$ such that the following holds. For $N,M \geq 2$, $R = [0,N]\times[-M,M]$, $V = R \cap \mbb Z^2$, and $\zeta = \min\{n \geq 1\,:\, S_n \in \partial V\}$. We have
\[
\mbb{P}_1(\Re(S_\zeta) \neq 0) \leq \frac{c}{N\wedge M}.
\] 
\end{lemma}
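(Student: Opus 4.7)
The plan is to decompose by exit side,
\[
\{\Re(S_\zeta)\neq 0\}\;\subseteq\; \{\Re(S_\zeta)=N\} \;\cup\; \{\Im(S_\zeta)\in\{-M,M\}\},
\]
bound the first summand by $1/N$ via a linear martingale, and bound the second by $O(1/M)$ via an exponential-times-cosine discrete harmonic function on an enlarged half-strip domain. Combining these gives $1/N+c/M \leq c'/(N\wedge M)$, as required.

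For the right-exit, $\Re(S_n)$ is a bounded martingale and $\zeta<\infty$ almost surely, so optional stopping yields $1=\Re(S_0)=\E_1[\Re(S_\zeta)]\geq N\,\PP_1(\Re(S_\zeta)=N)$, hence $\PP_1(\Re(S_\zeta)=N)\leq 1/N$.

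For the top/bottom exit, I would first enlarge the domain to the half-strip $H=\{(x,y)\in\Z^2: x\geq 0,\,|y|\leq M\}$: any trajectory that exits $V$ through $\{\Im=\pm M\}$ must reach $\{y=\pm M\}$ before first touching $\{x=0\}$, and in particular exits $H$ through $\{y=\pm M\}$ rather than $\{x=0\}$. It therefore suffices to bound the corresponding half-strip probability. Set $\beta=\pi/(2M)$ and let $\alpha>0$ be the unique positive root of $\cosh(\alpha)+\cos(\beta)=2$. A direct computation shows that $h(x,y)=\cos(\beta y)\,e^{-\alpha x}$ is discrete harmonic on $\Z^2$: indeed $\tfrac14\sum_{u\sim v}h(u)=\tfrac12[\cosh(\alpha)+\cos(\beta)]\,h(v)=h(v)$. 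On $H$ the function $h$ takes values in $[0,1]$, vanishes on $\{y=\pm M\}$ (since $\cos(\pm\pi/2)=0$), and satisfies $h(0,y)\in[0,1]$. The vertical component of $S$ is recurrent, so the walk leaves $H$ almost surely in finite time. Letting $\zeta_H$ denote the $H$-exit time and applying optional stopping to the bounded martingale $h(S_n)$,
\[
e^{-\alpha}\;=\;h(1,0)\;=\;\E_1[h(S_{\zeta_H})]\;=\;\E_1\!\left[h(S_{\zeta_H})\,\one_{\{\Re(S_{\zeta_H})=0\}}\right]\;\leq\;\PP_1(\Re(S_{\zeta_H})=0),
\]
so $\PP_1(\Im(S_{\zeta_H})=\pm M)\leq 1-e^{-\alpha}\leq \alpha$. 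The elementary inequalities $\alpha^2/2\leq \cosh(\alpha)-1=1-\cos(\beta)\leq \beta^2/2$ finally give $\alpha\leq\beta=\pi/(2M)$.

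The main non-routine ingredient is producing the harmonic function $h$: the dispersion relation $\cosh(\alpha)+\cos(\beta)=2$ is exactly what makes $h$ discrete-harmonic for the $\tfrac14$-weighted lattice Laplacian, and the choice $\beta=\pi/(2M)$ is what forces $h$ to vanish on the top and bottom sides of $H$, so that optional stopping extracts exactly the complement of the top/bottom-exit event (together with the boundedness of $h$ on $H$ which legitimizes the application). Once $h$ is identified, the rest is optional stopping, a domain-monotonicity comparison to the enlarged half-strip, and Taylor-type inequalities relating $\alpha$ to $\beta$.
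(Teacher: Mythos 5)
Your proof is correct, and it takes a genuinely different route from the paper's. Where you decompose the exit event by side and handle each side with explicit martingales---the linear martingale $\Re(S_n)$ for the right side, and the separable discrete-harmonic function $h(x,y)=\cos(\beta y)e^{-\alpha x}$ on an enlarged half-strip for the top and bottom---the paper instead cites \cite[Theorem 8.1.2]{LawlerLimic10} for the harmonic-measure bound $\mbb P_1(|\Im(S_{\zeta'})|>K)\leq c/K$ on the line $\{x=0\}$ (where $K=N\wedge M$), and then runs a short overshoot argument: from any boundary vertex $v$ not on $\{x=0\}$, a constant fraction of excursions end with $|\Im(S_{\zeta'})|>K$, so $\mbb P_1(\Re(S_\zeta)\neq 0)\leq (c')^{-1}\mbb P_1(|\Im(S_{\zeta'})|>K)$. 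Your version is more self-contained (no external citation, no softly-stated uniform overshoot constant) and produces an explicit bound $1/N + \pi/(2M)$, at the cost of needing to verify the dispersion relation $\cosh(\alpha)+\cos(\beta)=2$, the sign of $h$ on $H$, and the Taylor inequalities $\alpha^2/2\leq\cosh(\alpha)-1=1-\cos\beta\leq\beta^2/2$; the paper's argument is shorter once the cited estimate is accepted. Both are sound. One small point worth tightening: when you justify $\zeta_H<\infty$ a.s.\ via ``the vertical component of $S$ is recurrent,'' the relevant fact is that the $y$-coordinate is a lazy one-dimensional random walk and hence exits $[-M,M]$ in a.s.\ finite time, which bounds $\zeta_H$ from above regardless of the $x$-coordinate; phrased this way the claim is airtight.
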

\begin{proof}
Let $\zeta' = \min\{n \geq 1 \,:\, \Re(S_n) = 0\}$ and $K = N \wedge M$. We have by \cite[Theorem 8.1.2]{LawlerLimic10} that 
\[
\mbb{P}_1(|\Im(S_{\zeta'})|  > K ) \leq \frac{c}{K}.
\]
Let $D =  \partial V \setminus \{z \,:\, \Re(z) = 0\}$ and note that there exists a universal constant $c' > 0$ such that
\[
\mbb{P}_v(|\Im(S_{\zeta'})| > K) \geq c', \quad \forall v \in D.
\]
Finally, the conclusion follows by noting
\[
\mbb{P}_1(|\Im(S_{\zeta'})|  > K ) = \sum_{v \in D} \mbb{P}_1(S_\zeta = v)\mbb{P}_v(|\Im(S_{\zeta'})| > K) \geq c' \mbb{P}_1(\Re(S_\zeta) \neq 0).
\]
\end{proof}

The second result provides a lower bound on the probability that a random walk started near the left side of a box will exit the box through the right side.
\begin{lemma}\label{lm-Box Hm lower bound}
For any $a > 0$ there exists a constant $c_a > 0$ such that the following holds. For $N, M \geq 2$ with $M \geq a N$, $R = [0,N]\times [-M,M]$, $V = R\cap\mbb{Z}^2$, and $\zeta= \min\{n \geq 1\,:\, S_n\in \partial V\}$, we have
\[
\mbb{P}_{1}(\Re(S_{\zeta}) = N) \geq \frac{c_a}{N}.
\]
\end{lemma}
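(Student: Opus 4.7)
The plan is to combine optional stopping applied to the martingale $\Re(S_n)$ with a decoupling of the walk into independent horizontal and vertical one-dimensional walks. First, I would establish a monotonicity: $\PP_1(\Re(S_\zeta)=N)$ is non-decreasing in $M$, since enlarging the vertical extent of the box only gives the walk an additional chance to reach the right side after it would otherwise have exited through the top or bottom. It therefore suffices to prove the bound in the case $M=\lceil aN\rceil$.

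Let $\tau=\inf\{n\ge 1:\Re(S_n)\in\{0,N\}\}$. Since $\Re(S_n)$ is a bounded martingale, the optional stopping theorem yields $\PP_1(\Re(S_\tau)=N)=1/N$. Because
\[\{\Re(S_\zeta)=N\}=\{\Re(S_\tau)=N\}\cap\{\max_{0\le k<\tau}|\Im(S_k)|<M\},\]
the task reduces to bounding the conditional probability of the vertical event from below by a positive constant depending only on $a$. For this I would decouple the coordinates: at each step the walker flips a fair coin between a horizontal and a vertical move and then an independent sign, so with $N_h(n),N_v(n)$ denoting the horizontal/vertical step counts, $\Re(S_n)=1+X_{N_h(n)}$ and $\Im(S_n)=Y_{N_v(n)}$ for independent one-dimensional simple random walks $X,Y$, and $\tau$ is measurable with respect to $X$ and the coin sequence alone.

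A direct optional-stopping computation with the martingale $X_n^3-3nX_n$ gives $\E_1[\tau^X\,\one_{\{X_{\tau^X}=N-1\}}]=(N^2-1)/(3N)$ and hence $\E_1[\tau^X\mid\Re(S_\tau)=N]=(N^2-1)/3$, where $\tau^X:=N_h(\tau)$. Markov's inequality then yields $\PP(\tau^X\le N^2\mid\Re(S_\tau)=N)\ge 2/3$, and standard concentration of the coin flips (the number of vertical trials needed to accumulate $\tau^X$ horizontal ones has mean $\tau^X$) gives $N_v(\tau)\le 3N^2$ with conditional probability bounded below. Since $Y$ is independent of all the preceding information, a reflection-principle estimate (the simple-random-walk analog of Proposition~\ref{prop::Hitting times with drift} with $m=0$, or that proposition combined with the invariance principle) gives $\PP(\max_{0\le k\le 3N^2}|Y_k|<aN)\ge c_a''>0$, since the rescaled quantity converges in distribution to $\max_{0\le s\le 3}|B_s|$, which puts positive mass on $(0,a)$. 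Combining the three independent lower bounds yields $\PP_1(\Re(S_\zeta)=N)\ge c_a/N$ with $c_a=(2/3)c_a''$.

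The main obstacle is really just the bookkeeping in the decoupling step, in particular the passage from a bound on $\tau^X$ to a bound on $N_v(\tau)$; the one-dimensional inputs (the hitting-time identity via $X_n^3-3nX_n$ and the reflection-principle bound on $\max|Y_k|$) are both classical. Alternatively one could bypass the decoupling entirely by appealing to the invariance principle and computing the Brownian Poisson kernel in $[0,1]\times[-a,a]$ at $(1/N,0)$, which equals $c_a/N+O(1/N^2)$ for an explicit positive $c_a$, but this approach requires a uniform convergence rate for the discrete harmonic measure up to the boundary.
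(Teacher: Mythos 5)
Your proof is correct and takes a genuinely different route from the paper's. The paper works in the semi-infinite slab $(-\infty, N]\times[-M/2,M/2]$, uses an invariance-principle lower bound for the exit probability there, and converts it into the desired estimate via a last-exit decomposition at the column $\{\Re=1\}$ together with the Green's function bound $\max_{v\in D} G(v,D)\le c_a'N$ from \cite[Lemma~1]{DingLi18}. Your argument bypasses that external Green's function estimate entirely: after the monotonicity reduction to $M=\max(2,\lceil aN\rceil)$ you factor the crossing event as $\{\Re(S_\tau)=N\}\cap\{\max_{k<\tau}|\Im(S_k)|<M\}$, compute $\mbb{P}_1(\Re(S_\tau)=N)=1/N$ by optional stopping, and then control the vertical excursion by decoupling the walk into independent horizontal/vertical SRWs governed by a Bernoulli coin sequence. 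The martingale computation with $X_n^3-3nX_n$ giving $\E[\tau^X\mid X_{\tau^X}=N-1]=(N^2-1)/3$ is correct, as is the reduction of $N_v(\tau)$ to a negative binomial with conditional mean $\tau^X$. This is arguably cleaner and more self-contained than the paper's argument, at the cost of a little bookkeeping.

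A few small things to tighten. First, the final constant should be $(4/9)c_a''$ rather than $(2/3)c_a''$: you pay a Markov factor $2/3$ for $\{\tau^X\le N^2\}$ and another Markov factor $2/3$ for $\{N_v(\tau)\le 3\tau^X\}$ before the independent $Y$-bound kicks in. Second, the bound $\mbb{P}(\max_{k\le 3N^2}|Y_k|<\lceil aN\rceil)\ge c_a''>0$ does not follow from a naive reflection-principle tail estimate when $a$ is small (the union bound $4\bar\Phi(a/\sqrt 3)$ exceeds $1$), and it is literally $0$ when $aN<1$; what you need is the invariance-principle limit $\mbb{P}(\max_{s\le 3}|B_s|<a)>0$ (or a spectral/exit-time estimate in a strip), which gives a uniform constant only for $N\ge N_0(a)$. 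You should state explicitly that for $N\le N_0(a)$ the claim is trivial because $\mbb{P}_1(\Re(S_\zeta)=N)\ge(1/4)^{N-1}\ge c/N$; with that one-line remark the argument is complete.
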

\begin{proof}
Let $M' = M/2$, $V' = (-\infty,N]\times[-M',M'] \cap \mbb{Z}^2$, and $\zeta' = \min\{n \geq 1\,:\, S_n \in \partial V'\}$. For an integer $k$, let $v_k = 1 + ik$. By the invariance principle, there exists a constant $c_a>0$ such that
\[
\mbb{P}_{v_k}(\Re(S_{\zeta'}) = N) \geq c_a, \quad |k| \leq M'/2.
\]

Let $D = \{v_k\,:\, |k| \leq M' -1 \}$, $\tau = \min\{n \geq 1 \,:\, S_n \in D\} \wedge \zeta'$, and $G$ be the Green's function on $V'$. By a last exit decomposition, we have for $v \in D$
\[
\mbb{P}_{v}(\Re(S_{\zeta'}) = N) = \sum_{v' \in D} G(v,v') \mbb{P}_{v'}(\Re(S_\tau) = N).
\]
By \cite[Lemma 1]{DingLi18}, $\max\{G(v, D)\,:\, v \in D\} \leq c_a' N$. Therefore, combining the last two displays and summing over $v \in D$ gives
\[
c_a N \leq \sum_{v \in D} \mbb{P}_{v}(\Re(S_{\zeta'}) = N) \leq c_a' N\sum_{v \in D} \mbb{P}_{v}(\Re(S_{\tau}) = N).
\]
Finally, we note that for any $v \in D$, $\mbb{P}_v(\Re(S_\tau) = N) \leq \mbb{P}_1(\Re(S_\zeta) = N)$, so the conclusion follows.
\end{proof}

\section{Estimates of crossing probabilities in the metric graph}
\label{MGFF RSW}

\subsection{The zero-boundary case}
\label{subsec::MGFF_zerobc}
In this section we prove \eqref{MGFF zero boundary result}. The proof consists of analyzing the exploration martingale $M$, introduced in Section~\ref{subsec::pre_explorationmart}, corresponding to an exploration on $\til E^{\geq 0}_\delta$ with $\mc I_0 = [c'_\delta,d'_\delta)$ and $U = \{u \in \partial V_{\delta}^o\,:\, \Re(u) \leq L/4\}$. 
Recall that
\[M_t = \E[X_U \mid \mc F_{\mc I_t}]=\sum_{v\in\mc{I}_t}\Hm_t(U, v)\tilde{\phi}_{\delta}(v).\]
We let $\mc I_0^- = \{v \in \mc{I}_0\,:\, \til\phi_\delta(v) < 0 \}$. Since $\til\phi_\delta$ is non-negative on $\mc I_t \setminus \mc I_0^-$, we have 
\[
M_t \geq \sum_{v \in \mc I_0^-} \Hm_t(U,v) \til\phi_\delta(v). 
\]
Noting that $\Hm_t(U,v)$ is decreasing in $t$ for $v \in \mc I_0^-$ (since $\mc I_t$ is increasing), we conclude
\begin{equation}\label{eqn::mgffzero_mart_lowerbound}
M_t - M_0 \geq -\sum_{v \in \mc I_0} \Hm_0(U,v) (\til\phi_\delta(v)\vee 0).
\end{equation}

Next, we turn to bounding the quadratic variation. In particular, we claim that there exists $c = c(L) > 0$ such that
\begin{equation}\label{Horizontal crossing QV lower bound}
\left\{[a_\delta',b_\delta') \stackrel{\til\phi_\delta \geq 0}{\longleftrightarrow}[c'_\delta,d'_\delta)\right\} \subseteq \left\{\langle M \rangle_\infty \geq c |\log(\delta)|\right\}.
\end{equation}

Before proving~\eqref{Horizontal crossing QV lower bound}, we show how it can be used to conclude the proof of~\eqref{MGFF zero boundary result}. Suppose $(B_s)_{s\ge 0}$ is a standard one-dimensional Brownian motion. 
Combining Theorem~\ref{Time-change theorem} with~\eqref{eqn::mgffzero_mart_lowerbound} and~\eqref{Horizontal crossing QV lower bound},  we have
\begin{align*}
\mbb{P}\left([a_\delta',b_\delta') \stackrel{ \til\phi_{\delta}\ge 0}{\longleftrightarrow}[c'_\delta,d'_\delta)\right) &\leq \mbb{P}\left(\inf_{0 \leq s \leq c|\log(\delta)|} B_s \geq - \sum_{v \in \mc I_0} \Hm_0(U,v) (\til\phi_\delta(v)\vee 0)\right)\\
&\leq \frac{1}{\sqrt{c|\log(\delta)|}} \sum_{v \in \mc I_0} \Hm_0(U,v)\E[\til\phi_\delta(v)\vee 0]\\
&\leq \frac{c'}{\sqrt{|\log(\delta)|}}\Hm(U,\mc I_0),
\end{align*}
where in the second inequality we used the fact that, for $t$ and $a>0$, we have by Proposition~\ref{prop::Hitting times with drift} that $\mbb{P}\left(\inf_{0\le s\le t}B_s\ge -a\right)\le a/\sqrt{t}$ ; and in the third inequality we used the fact that $G(v,v) \leq 4$ for $v \in \mc I_0$ (since $v$ is adjacent to $\partial V_\delta$). It remains to bound $\Hm(U,\mc I_0)$. Since $U, \mc I_0 \subset V_\delta$, we can consider a discrete time simple random walk $S$ (on $\delta \mbb{Z}^2$), killed on $\partial V_\delta$ (instead of the metric graph Brownian motion $\til B$). For a vertex $u \in U$, we have $\Hm(u,\mc I_0)\le c\delta$ by Lemma~\ref{lm-Box Hm upper bound}. Since $|U| \leq c/\delta$, we conclude
\[
\Hm(U, \mc I_0) \leq c.
\]
Thus,the proof will be complete once we prove \eqref{Horizontal crossing QV lower bound}.

\begin{proof}[Proof of \eqref{Horizontal crossing QV lower bound}]
We begin by noting that on $\left\{[a'_\delta,b'_\delta) \stackrel{\til\phi_\delta \geq 0}{\longleftrightarrow} [c'_\delta,d'_\delta)\right\}$ there exists a vertex $u^* \in [a'_\delta,b'_\delta)$ such that the set $\mc{I}_\infty$ contains a nearest neighbor path $\gamma \subset V_\delta^o$ connecting $u^*$ to the complement of the box of radius $L/4$ around $u^*$. We will show that for any such $u^*$ and any such $\gamma$, we have
\[
\var[X_U \mid \mc{F}_{\mc I_0}] - \var[X_U \mid \mc{F}_{\gamma \cup \mc I_0}] \geq c|\log(\delta)|.
\]
Recall that $G_0$ denotes the Green's function on $\tilde{V}_{\delta}\setminus \mc I_0$ and that
\[
\var[X_U \mid \mc{F}_{\mc I_0}] - \var[X_U \mid \mc{F}_{\gamma \cup \mc I_0}] = \sum_{v \in \gamma}\Hm_0(U,v; \gamma) G_0(v,U).
\]
Let $S$ be a discrete-time simple random walk (on $\delta \mbb{Z}^2$) killed on $\partial V_\delta \cup \mc I_0$ and $T_U$ be the hitting time of $U$ by $S$. It is easy to see that
\[
G_0(v,U) \geq \mbb{P}_v(T_U < \infty) \geq c, \quad \forall v \in \gamma,
\]
for some universal constant $c > 0$. Therefore, it remains to show
\begin{equation}\label{eq-path to boundary Harmonic measure}
\Hm_0(U,\gamma) \geq c |\log(\delta)|.
\end{equation}
To this end, we partition $U$ and $\gamma$ as follows. For $n\geq0$ an integer, we let $Q_n$ be the box of radius $r_n = 2^{-n-2}L$ centered at $u^*$, $A_n = Q_n \setminus Q_{n+1}$, $U_n = U \cap A_n$, and $\gamma_n = \gamma \cap A_n$. We note that $N = \max\{n \geq 1\,:\, r_n \geq 1000\delta\}$ satisfies $N \geq c |\log(\delta)|$. Finally, we claim there exists a universal constant $c > 0$ such that
\[
\Hm_0(U_n,\gamma_n; \gamma) \geq c,\quad 0 \leq n \leq N\,.
\]
For the proof, let $u_1 \in U_n$ be such that $2r_n/3 - \delta < |u_1 - u^*|_{\ell_\infty} \leq 2r_n/3$, $u_2 \in U_n$ be such that $5r_n/6 \leq |u_2 - u^*|_{\ell_\infty} < 5r_n/6 + \delta$, and $Q_{n,2}$ be a box of radius $r_n/12$ centered at $u_2$. With these choices, the distances between $u_1$ and $Q_{n,2}$, and between $\{u_1\} \cup Q_{n,2}$ and $A_n^c$ are of the same order as $r_n$. Therefore, if we let $S$ be a simple random walk and $\mathfrak E$ be the event that $S$ hits $Q_{n,2} \cap U$ and then completes a loop around $u^*$ before exiting $A_n$, we have $\mbb{P}_{u_1}(\mathfrak E) \geq c$. Note that such a walk necessarily contains a path from $U_n$ to $\gamma_n$ which does not hit $\partial V_\delta$. Therefore, letting $T$ be the hitting time of $U_n \cup \gamma_n$ by $S$, and $\zeta$ be the hitting time of $A_n^c$, we have by a last exit decomposition
\[
\mbb{P}_{u_1}(\mathfrak E) \leq \sum_{u \in U} G^*_n(u_1,u)\mbb{P}_u(S_{T \wedge \zeta} \in \gamma_n),
\]
where $G^*_n(u_1,u)$ is the expected number of visits a random walk started at $u_1$ makes to $u$ after hitting $Q_{n,2}\cap U_n$, and before exiting $A_n$. It is immediate that $\mbb{P}_u(S_{T \wedge \zeta} \in \gamma_n) \leq \Hm(u,\gamma_n ; \gamma)$, and we have that
\[
G_n^*(u_1,u) \leq G_n(u_1,u) \wedge \max\{G_n(u',u) \,:\, u' \in Q_{n,2} \cap U_n\},
\]
where $G_n$ is the Green's function on $A_n \cap \delta \mbb{Z}^2$. It follows from \cite[Theorem 4.4.4, Proposition 4.6.2]{LawlerLimic10} that $G^*_n$ is uniformly bounded. That is, there exists a universal constant $c$ such that
\[
G_n^*(u_1,u) \leq c,\quad u \in U_n.
\]
Finally, this implies that
\[
\Hm_0(U_n,\gamma_n; \gamma) \geq \sum_{u \in U_n} \mbb{P}_{u}(S_{T \wedge \zeta} \in \gamma_n)\geq c \mbb{P}_{u_1}(\mathfrak E) \geq c.
\]
This concludes the proof.
\end{proof}

\subsection{The alternating boundary case}\label{subsec::alternating MGFF proof}
\label{subsec::MGFF_alternatingbc}
In this section we prove the lower bound in~\eqref{MGFF alternating boundary result}. The proof consists of two main claims, both of which are proved using an exploration martingale. 
Recall that $\til{\A}_{\delta,0}$ (resp. $\til{\AB}_{\delta,0}$) is the first passage set of $\tilde{\phi}_{\delta}$ above zero (resp. below zero). First, letting $\Pi = \{z \,:\, |\Im(z)-1/2|\le 1/4\}$, we claim that there exists a constant $c_1=c_1(L, \lambda)>0$ such that
\begin{equation}\label{Control on negative cluster}
\mbb{P}\left(\til{\AB}_{\delta,0} \cap \Pi = \emptyset\right) \geq c_1.
\end{equation}                                                                                                                                                                                                           
Next, we claim that conditional on this event, the probability that there is a positive crossing is bounded uniformly away from zero. That is, letting $\mbb{P}^+$ denote the law of $\til{\phi}_{\delta}$ given $\mc{F}_{\til{\AB}_{\delta,0}}$ (and $\E^+$ be the expectation with respect to $\mbb{P}^+$), there exists a constant $c_2=c_2(L, \lambda)>0$ such that
\begin{equation}\label{Plus-zero RSW}
\mbb{P}^+\left([a_\delta,b_\delta) \stackrel{\til{\A}_{\delta,0}}{\longleftrightarrow} [c_\delta, d_\delta)\right) \geq c_2 \quad \text{a.s. on } \{\til{\AB}_{\delta,0} \cap \Pi = \emptyset\}.
\end{equation}

Assuming~\eqref{Control on negative cluster} and~\eqref{Plus-zero RSW}, we obtain the lower bound in~\eqref{MGFF alternating boundary result}: 
\[
\mbb{P}\left([a_\delta,b_\delta) \stackrel{\til{\A}_{\delta,0}}{\longleftrightarrow} [c_\delta, d_\delta)\right) \ge \mbb{P}\left([a_\delta,b_\delta) \stackrel{\til{\A}_{\delta,0}}{\longleftrightarrow} [c_\delta, d_\delta), \til{\AB}_{\delta,0} \cap \Pi = \emptyset\right)\ge c_1c_2.
\]
Thus it remains to show~\eqref{Control on negative cluster} and~\eqref{Plus-zero RSW}.

\begin{proof}[Proof of~\eqref{Control on negative cluster}]
For this proof, we consider an exploration martingale $M$, introduced in Section~\ref{subsec::pre_explorationmart}, corresponding to an exploration of $\til E^{\leq 0}_\delta$ from $\mc I_0 = [b_\delta,c_\delta) \cup [d_\delta,a_\delta)$ with observable corresponding to 
\[
U = \left\{u \,:\, u \in [a'_\delta,b'_\delta) \cup [c'_\delta,d'_\delta), \Big|\Im(u) - \frac{1}{2} \Big| \leq \frac{3}{8}\right\}
\]
The following processes will be useful in the analysis
\[
\pi_t = \Hm_t(U,[a_\delta,b_\delta) \cup [c_\delta,d_\delta)), \quad \mu_t = \Hm_t(U,\mc I_t).
\]
Note that we have $M_0 = \lambda(\pi_0 - \mu_0)$ and $M_t \leq \lambda \pi_t$. Since $\pi_t$ is decreasing,
\[
M_t - M_0 \leq \lambda \mu_0.
\]
Note that by Lemma~\ref{lm-Box Hm upper bound} there exists $c > 0$ such that $\Hm(u,\mc I_0) \leq c\delta$ for all $u \in U$, which implies $\mu_0 \leq c$. Let $T = \til{D}^{\leq 0}_\delta(\mc I_0,\Pi)$ be the time that the exploration reaches $\Pi$. On $\{\til{\AB}_{\delta,0} \cap \Pi \neq \emptyset\}$, the set $\mc I_T$ contains a nearest neighbor path $\gamma \subset V_\delta^0$ crossing one of the strips making up the region
\[
\Pi' = \left\{z \,:\, \frac{1}{4}\leq \Big|\Im(z) - \frac{1}{2}\Big| \leq \frac{3}{8}\right\}.
\]
We claim that for there exists a constant $c > 0$ such that for any such path
\begin{equation}\label{eq-Alternating MGFF QV lower bound 1}
\var(X_U) - \var(X_U \mid \mc{F}_{\gamma}) \geq c.
\end{equation}
Assuming this bound for now, we have
\[
\langle M\rangle_T \geq \min_{\gamma} \{\var(X_U) - \var(X_U \mid \mc{F}_{\gamma}) \}\geq c,
\]
almost surely on $\{\til{\AB}_{\delta,0} \cap \Pi \neq \emptyset\}$ (here we used the fact that $\mc{F}_{\mc I_0}$ is the trivial $\sigma$-field since $\mc I_0 \subset \partial V_\delta$). Applying Theorem~\ref{Time-change theorem} and Proposition~\ref{prop::Hitting times with drift} gives
\[
\mbb{P}(\til{\AB}_{\delta,0} \cap \Pi \neq \emptyset) \leq \mbb{P}\left(\sup_{0 \leq t \leq c} B_t \leq c'\lambda\right) \leq 1 - c''.
\]
Turning to the proof of \eqref{eq-Alternating MGFF QV lower bound 1}, we have by the invariance principle that $G(v, U) \geq c$ for any $v \in \gamma$. Therefore
\[
\var(X_U) - \var(X_U \mid \mc{F}_{\gamma}) = \sum_{v \in \gamma} \Hm(U,v; \gamma) G(v, U) \geq c \Hm(U,\gamma).
\]
Finally, by Lemma~\ref{lm-Box Hm lower bound} there exists a constant $c > 0$ such that $\Hm(u,\gamma) \geq c\delta$ for all $u \in U \cap \Pi$. Since $|U \cap \Pi| \geq c \delta^{-1}$, it follows that $\Hm(U, \gamma) \geq c$. This concludes the proof.
\end{proof}

\begin{proof}[Proof of \eqref{Plus-zero RSW}]
Let $\til{V}_\delta^+$ be the connected component of $\til{V}_\delta \setminus \til{\AB}_{\delta,0}$ containing $[a_\delta,b_\delta) \cup [c_\delta,d_\delta)$, $V_\delta^+ = \til{V}_\delta^+ \cap \delta\mbb{Z}^2$ be the vertices in $\til{V}_\delta^+$, and $U = [c'_\delta,d'_\delta) \cap V_\delta^+$. For any $u \in U$, let $\adj(u)$ be the (unique) vertex in $[c_\delta,d_\delta)$ adjacent to $u$. For $0 < \epsilon \leq 1$, let $u_\epsilon = \epsilon u + (1-\epsilon)\adj(u)$ and
\[
U_\epsilon = \{ u_\epsilon\,:\, u\in U\}.
\]
Finally, we let $M_\epsilon$ be the exploration martingale corresponding to an exploration on $\til{E}^{\geq 0}_\delta$ from $\cI_0 = [a_\delta,b_\delta)$ with obsevable $X_{U_\epsilon}$. As in the proof of \eqref{Control on negative cluster}, we will use the processes
\[
\pi_{\epsilon,t} = \Hm^+_t(U_\epsilon, [c_\delta,d_\delta)), \quad \mu_{\epsilon,t} = \Hm^+_t(U_\epsilon, \mc{I}_t),
\]
where $\Hm_t^+$ is the harmonic measure on $\mc{I}_t \cup \partial \til V_\delta^+$. That is, letting $\til\zeta^+$ be the hitting time of $\partial \til V_\delta^+$ and $\tau_t$ be the hitting time of $\mc{I}_t$, $\Hm_t^+(u,v) = \mbb{P}_u(\til B_{\tau_t \wedge \til\zeta^+}= v)$. Note that $Hm^+_0$ is simply the harmonic measure on $\partial \til V_\delta^+$ so we will write $\Hm^+$ in this case instead. Note that $M_{\epsilon,t} \geq \lambda \pi_{\epsilon,t}$ and $M_{\epsilon,0} = \lambda (\mu_{\epsilon,0} + \pi_{\epsilon,0})$, so we obtain
\[
M_{\epsilon,t} - M_{\epsilon,0} \geq -\lambda(\mu_{\epsilon,0} + \pi_{\epsilon,0} - \pi_{\epsilon,t}),
\]
with equality if and only if $M_{\epsilon,t} = M_{\epsilon, \infty}$ and $\mc I_t \cap U_\epsilon = \emptyset$ (i.e. the exploration has stopped by time $t$ before hitting $U_\epsilon$). To conclude the proof, we need to lower bound $\mu_{\epsilon,0}$ and upper bound $\langle M_\epsilon \rangle_t$.

First, we claim that there exists $c_1 > 0$ such that $\mu_{\epsilon,0} \geq c_1 \epsilon$. Indeed, we have by Lemma~\ref{lm-Box Hm lower bound} and the assumption $\til{\AB}_{\delta,0} \cap \Pi = \emptyset$ that for any $u \in U$ such that $|\Im(u) - 1/2| \leq 1/8$
\[
\Hm^+(u, [a_\delta,b_\delta)) \geq c_1 \delta.
\]
It follows that $\Hm^+(U, [a_\delta,b_\delta)) \geq c_1$. Next, by construction a metric graph Brownian motion started at $u_\epsilon$ will hit $u$ before $\adj(u)$ with probability $\epsilon$. This gives 
\[
\mu_{\epsilon,0} = \epsilon\Hm^+(U,[a_\delta,b_\delta)) \geq c_1\epsilon.
\]

Second, for the upper bound on the quadratic variation, we claim that there exists $c_2 > 0$ such that 
\[
\langle M_\epsilon \rangle_t \leq c_2 \epsilon (\pi_{\epsilon,0} - \pi_{\epsilon,t}), \quad \forall t \leq \til{D}^{\geq 0}_\delta([a_\delta,b_\delta),U_\epsilon).
\]
For the proof, note that for such $t$ we have 
\begin{align*}
\pi_{\epsilon,0} - \pi_{\epsilon,t} &= \sum_{v \in \partial \til{\mc{I}}_t} \Hm^+_t(U_\epsilon,v)\Hm^+(v,[c_\delta,d_\delta)), \\
\langle M_\epsilon \rangle_t &= \sum_{v \in \partial \til{\mc{I}}_t} \Hm^+_t(U_\epsilon,v)G^+(v,U_\epsilon),
\end{align*}
where $G^+$ is the Green's function of a metric graph Brownian motion killed on $\partial\til V_\delta^+$. To proceed, let $\tau_{u_\epsilon}$ be the hitting time of $u_\epsilon$ and recall $\til\zeta^+$ is the hitting time of $\partial \til V_\delta^+$. We have
\begin{align*}
G^+(v,U_\epsilon) &= \sum_{u \in U}\mbb{P}_v(\tau_{u_\epsilon} < \til\zeta^+) G^+(u_\epsilon,u_\epsilon),\\
	&= \epsilon \sum_{u \in U} \mbb{P}_v(\tau_{u_\epsilon}<\til\zeta^+)[ \epsilon G^+(u,u) + 4(1-\epsilon)],\\
	&\leq 4\epsilon\sum_{u \in U}\mbb{P}_v(\tau_{u_\epsilon}<\til\zeta^+), 
\end{align*}
where in the second equality we used \eqref{Metric Green Interpolation} to express $G^+(u_\epsilon, u_\epsilon)$ as a sum of terms involving $u$ and $\adj(u)$. Since $\adj(u) \in \partial \til V_\delta^+$ we have $G^+(u,\adj(u)) = G^+(\adj(u),\adj(u)) = 0$. In the inequality we used the fact that $G^+(u,u) \leq 4$ since $u$ is adjacent to $\partial \til V_\delta^+$. Finally, assuming from now on that $\epsilon < 1/2$, we have
\[
\Hm^+(v,\adj(u)) \geq \frac{1}{2}\mbb{P}_v(\tau_{u_\epsilon} < \til\zeta^+).
\]
It follows that $\langle M_\epsilon \rangle_t \leq 16\epsilon(\pi_{\epsilon,0} - \pi_{\epsilon,t})$. Putting both bounds together we conclude that (conditional on $\{\til{\AB}_{\delta,0} \cap \Pi = \emptyset\}$)
\[
\left\{M_{\epsilon,t} - M_{\epsilon,0} \geq  - \frac{c_2\lambda}{\epsilon}\langle M_\epsilon\rangle_t - c_1\lambda\epsilon,\, 0 \leq t \leq \til{D}^{\geq 0}_\delta([a_\delta,b_\delta),U_\epsilon) \right\} \subseteq \{[a_\delta,b_\delta) \stackrel{\til{\A}_{\delta,0}}{\longleftrightarrow}U_\epsilon\}.
\]
Applying Theorem~\ref{Time-change theorem} and Proposition~\ref{prop::Hitting times with drift} (after re-scaling the Brownian motion) we obtain for some $c > 0$ (independent of $\epsilon$),
\[
\mbb{P}^+\left([a_\delta,b_\delta) \stackrel{\til \phi_\delta \geq 0}{\longleftrightarrow}U_\epsilon\right) \geq
	\mbb P(B_t \geq -\lambda (c_2 t + c_1), t \geq 0) \geq c,
\]
where $B$ is a standard Brownian motion. Letting $\epsilon \to 0$ concludes the proof.
\end{proof}

\section{Estimates of crossing probabilities in the discrete graph}
\label{DGFF RSW}
The goal of this section is to prove \eqref{DGFF zero boundary result} and \eqref{Alternating boundary difference result}. Both proofs begin with the observation that in the discrete graph, when we explore the excursion set $E_\delta^{>0}$ starting from $\mathcal I_0$, the set $\partial \mathcal I_\infty \setminus \partial V_\delta$ is contained in $E^{<0}_\delta$. We show that in fact, suitably defined averages of the field on this set are bounded away from 0 with high probability. Following the literature on the subject, we call this phenomenon entropic repulsion. It was previously studied in a similar context in \cite{DingLi18}.

\subsection{Entropic repulsion for explorations on the discrete graph}
In this section we formalize the idea, stated above, that when there is no horizontal crossing the values of the field on the outer boundary of the explored set are strictly negative. We begin with some setup. For this section, we let $\til{K} \subset \til{ \mbb{Z}}^2$ be compact and simply connected and $V = \til{K} \cap \mbb{Z}^2$. For $U, I \subset V$, we let $D$ and $D'$ be
\[
D = \{v \in I \,:\, \Hm(U,v;I) > 0\}, \quad D' = \{v \in I \,:\, \Hm(U,v;I \setminus D) > 0\},
\]
where $\Hm(\cdot,\cdot; W)$ is the harmonic measure on $W \cup \partial \til K$. That is, $D$ is the subset of $\partial I$ that can be reached from $U$, and $D'$ is the subset of $\partial (I \setminus D)$ that can be hit from $U$ (equivalently from $D$). As the notation suggests, $U$ corresponds to the ``observable'' in the definition of the exploration martingale, while $I$ corresponds to the explored set. We will assume the pair $(U,I)$ satisfies some geometric conditions. First, that for any $v \in D$, all edges incident on $v$ are contained in $\til{K}$. That is, $\dist(v, \partial \til{\mc{K}}) \geq 1$, where $\dist$ denotes the Euclidean distance. Second, that for any $v \in D$, there exists $w \in I \setminus D$ such that $v \sim w$. We let $\xi$ be given by
\[
\xi(U, I) = \frac{1}{\Hm(U,I)} \sup\{\Hm(U,w;I \setminus D) \,:\, w \in D'\}.
\]
The reason the supremum is over $w \in D'$ is the following. For any $v \in D$ there exists $w \in D'$ such that $v \sim w$ which gives
\[
\Hm(U,w; I\setminus D) \geq \frac{1}{4}\Hm(U,v;I).
\]
Therefore, $\Hm(U,v;I) \leq 4 \xi(U,I) \Hm(U,I)$ for any $v \in D$. Finally, we let $\til\phi$ be a metric graph GFF on $\til{K}$ with boundary condition $f$ satisfying $|f(v)| \leq \lambda$ for all $v \in \partial \til{K}$. As usual, we let $\phi$ be the restriction of $\til\phi$ to $V$. Note that $\phi$ is a discrete GFF on $V \cup \partial \til{K}$ with boundary condition $f$. The goal is to show that when $\xi(U,I)$ is small and we condition $\phi$ to be negative on $D$, the typical value of the field on $D$ (as seen from $U$) is bounded away from zero with high probability. 

\begin{proposition}\label{Entropic repulsion result}
Let $\til{K}$, $U$, and $I$ satisfy the conditions above. Additionally, let $I^+$ and $I^-$ be a partition of $I$ such that $D \subseteq I^-$ and for every $v \in D$, there exists $w \in I^+$ such that $v \sim w$. Let $\mathfrak E$ be the following event
\[
\mathfrak{E} = \{\til\phi(v) < 0 \,:\, v \in I^-\} \cap \{\til\phi(v) \geq 0\,:\, v \in I^+\}.
\]
Let $Y$ be the following random variable
\[
Y = \sum_{v \in D} \Hm(U,v;I)\phi(v).
\]
There exists a universal function $r$ satisfying $r(x) \to 0$ as $x \to 0$ and a constant $\Delta = \Delta(\lambda) > 0$ such that
\[
\mbb{P}(Y \leq -\Delta \Hm(U,I) \mid \mathfrak E) \geq 1 - r(\xi(U,I)).
\]
\end{proposition}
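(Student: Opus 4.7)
The strategy is a Cameron--Martin type Gaussian shift argument, in the spirit of the entropic repulsion estimate in \cite{DingLi18}.

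The starting observation is that the event $\mathfrak{E}$ is stable under the deterministic shift $\phi\mapsto\phi+h_\alpha$, where $h_\alpha=-\alpha\mathbbm{1}_D$ for a small constant $\alpha=\alpha(\lambda)>0$ to be chosen. Indeed, $h_\alpha$ is nonpositive and supported on $D\subseteq I^-$, so adding $h_\alpha$ only makes $\phi$ more negative on $D$ (reinforcing the constraint $\phi<0$ on $I^-$) and leaves the constraint $\phi\ge 0$ on $I^+$ untouched. Moreover, $Y(\phi+h_\alpha)=Y(\phi)-\alpha\,\Hm(U,I)$, since $\sum_{v\in D}w(v)=\Hm(U,I)$ with $w(v)=\Hm(U,v;I)$.

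Next, the plan is to apply the Cameron--Martin formula comparing the law $P$ of $\phi$ (Gaussian with mean $\mu$, the harmonic extension of $f$, and covariance $G$) to the shifted law $P_{h_\alpha}$ with the same covariance but mean $\mu+h_\alpha$. The Radon--Nikodym derivative $dP_{h_\alpha}/dP$ equals $\exp\bigl(\langle h_\alpha,G^{-1}(\phi-\mu)\rangle-\tfrac{1}{2}\langle h_\alpha,G^{-1}h_\alpha\rangle\bigr)$, and the key quantity is the energy $\mathcal{E}:=\langle h_\alpha,G^{-1}h_\alpha\rangle$. Using shift stability of $\mathfrak{E}$ together with Cauchy--Schwarz, one derives inequalities relating $\mbb{P}(\mathfrak{E}\cap\{Y\in J\})$ for intervals $J$ obtained from iterated shifts. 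Because the shifted copies of $A:=\mathfrak{E}\cap\{Y\ge -\Delta\Hm(U,I)\}$ lie in pairwise disjoint $Y$-ranges (all inside $\mathfrak{E}$), their probabilities sum to at most $\mbb{P}(\mathfrak{E})$, which translates into a bound of the form
\[
\mbb{P}\!\left(Y\ge -\Delta\Hm(U,I)\,\big|\,\mathfrak{E}\right)\le \psi(\mathcal{E})
\]
for some universal function $\psi$ with $\psi(s)\to 0$ as $s\to 0$. The constant $\Delta=\Delta(\lambda)$ will be calibrated so that the bound $|\mu(v)|\le\lambda$ and the lower bound $G(v,v)\ge c$ for $v\in D$ (which follows from $\dist(v,\partial\til K)\ge 1$ and Lemma~\ref{Greens scaling}) make the shift meaningful, i.e., force a definite separation of $Y$ from $0$.

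The main obstacle is controlling the energy $\mathcal{E}$ in terms of $\xi(U,I)$: the naive choice $h_\alpha=-\alpha\mathbbm{1}_D$ gives an energy proportional to the discrete capacity of $D$ in $\til K$, which need not be small. The remedy is to pick a more efficient shift, namely the unique minimizer of $\langle h,G^{-1}h\rangle$ subject to $h\le 0$ on $I^-$, $h=0$ on $I^+$, and $\langle w,h\rangle=-\alpha\Hm(U,I)$. By Lagrange multipliers this shift is proportional to (a truncation of) the Green's potential $G(U,\cdot)$, and its energy equals $\alpha^2\Hm(U,I)^2/\sigma^2$, where $\sigma^2$ is the corresponding constrained variance of $Y$. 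The smallness of $\xi(U,I)$ forces $w$ to be spread out, via $\max_v w(v)\le 4\xi(U,I)\,\Hm(U,I)$, and a random-walk/Green's function estimate (combining $\var(Y)=\sum_v w(v)G(U,v)$ with these uniform bounds on $w$) shows that this spreading forces $\sigma^2$ to be large, so that $\mathcal{E}=O(\alpha^2\cdot\xi(U,I))$. Plugging this back into the comparison above yields the claimed bound with an explicit universal function $r$ satisfying $r(\xi)\to 0$ as $\xi\to 0$.
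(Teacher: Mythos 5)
Your Cameron--Martin plan has a fatal sign error in the energy estimate, and it cannot be repaired. You correctly identify that the shift $h$ must satisfy $h=0$ on $I^+$ (to preserve the constraint $\til\phi \ge 0$ there) and $\langle w,h\rangle = -\alpha\Hm(U,I)$ where $w(v)=\Hm(U,v;I)$. The constrained minimizer of $\langle h,G^{-1}h\rangle$ under these conditions is a standard projection computation, and its energy is exactly $\mc E = \alpha^2\Hm(U,I)^2/\var[Y\mid\mc F_{I^+}]$. But the paper's Corollary~\ref{cor-Conditional variance} (and its proof, which uses $\sup_v w(v)\le 4\xi\Hm(U,I)$ and $G_{\til K\setminus I^+}(U,D)\le 4\Hm(U,I)$) shows that $\var[Y\mid\mc F_{I^+}]\le 16\xi(U,I)\Hm(U,I)^2$. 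So the energy of your optimal shift satisfies $\mc E \ge \alpha^2/(16\,\xi(U,I))$, which \emph{blows up} as $\xi\to 0$ rather than going to zero. Your claim that ``spreading $w$ out forces $\sigma^2$ to be large'' has the direction of the inequality backwards: averaging a fixed total mass over many nearly-independent coordinates \emph{decreases} the variance of the weighted sum (law-of-large-numbers effect). And the issue cannot be circumvented by lifting the constraint on $I^+$: the unconstrained minimizer $h\propto Gw$ is strictly negative everywhere, hence violates the requirement $h\ge 0$ on $I^+$, and since every $v\in D$ is adjacent to some $u\in I^+$, any sign-admissible $h$ has a steep gradient across those edges and thus inherently large Dirichlet energy.

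This is not a cosmetic gap: large $\mc E$ makes the disjoint-translate argument vacuous. The shifted copies $A_0+kh_\alpha$ have probabilities decaying like $e^{-k^2\mc E/2}$, so $\sum_k\PP(A_0+kh_\alpha) \asymp \PP(A_0)$ and you learn nothing about $\PP(A_0\mid\mathfrak E)$. The underlying reason is that the conclusion of the Proposition is a \emph{concentration} statement (small $\var[Y\mid\mc F_{I^+}]$ means $Y$ is pinned near its conditional mean, which is strictly negative on $\mathfrak E$), whereas a Cameron--Martin translate argument proves \emph{anti-concentration} (a near-flat density), and the two go in opposite directions precisely because the conditional variance is small. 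The paper's proof is the right mechanism: a monotonicity reduction (Lemma~\ref{lm-Monotonicity to conditioning}), a Brascamp--Lieb bound $\var[Y\mid\mc F_{I^+},\mathfrak E]\le 16\xi\Hm^2$ (Lemma~\ref{lm-Conditional variance}, Corollary~\ref{cor-Conditional variance}), and a high-probability bound $\E[Y\mid\mc F_{I^+},\mathfrak E]\le -\Delta_1\Hm$ (Lemma~\ref{lm-Conditional expectation}, Corollary~\ref{cor-Conditional expectation}), assembled through Chebyshev. Your proposal never addresses the conditional-mean estimate at all, which is the substantive half of the paper's argument and the part that introduces the $\lambda$-dependence of $\Delta$.
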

The proof follows along the same lines as \cite[Lemma 6]{DingLi18}, and consists of bounding the conditional mean and variance of $Y$ given $\mathfrak E$ and the field on $I^+$. We isolate three lemmas that will be used repeatedly throughout the rest of the paper.
\begin{lemma}\label{lm-Monotonicity to conditioning}
Let $\{B_{1,v}\}_{v \in V}$ and $\{B_{2,v}\}_{v \in V}$ be a sequence of non-empty intervals satisfying
\begin{align*}
\inf B_{1,v} \leq \inf B_{2,v}, \quad \sup B_{1,v} \leq \sup B_{2,v}, \quad \forall v \in V.
\end{align*}
Let $\til\phi_1$ and $\til\phi_2$ be metric graph GFFs on $\til{K}$ with boundary conditions $f_1$ and $f_2$ such that $f_1(v) \leq f_2(v)$ for $v \in \partial \til{K}$, and let $\phi_1$ and $\phi_2$ be the corresponding discrete graph GFFs. We assume that $B_{j,v} = \{f_j(v)\}$ for $v \in \partial \til{K} \cap V$. Finally, for $j = 1,2$ let $\mathfrak E_j$ be the event
\[
\mathfrak E_j = \{\phi_j(v) \in B_{j,v}, v \in V\}.
\]
Then the law of $\phi_1$ given $\mathfrak E_1$ is stochastically smaller than the law of $\phi_2$ given $\mathfrak E_2$.
\end{lemma}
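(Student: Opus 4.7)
I would prove the lemma by verifying Holley's criterion for the joint (unnormalized) densities of $\phi_1\mid\mathfrak{E}_1$ and $\phi_2\mid\mathfrak{E}_2$ and then invoking the continuous-state Holley--FKG theorem. Let $V^o\subset V$ denote the set of vertices at which the field is random (the complement of $\partial\til{K}\cap V$). On $\mathbb{R}^{V^o}$ the conditional law $\nu_j:=\mathrm{Law}(\phi_j\mid\mathfrak{E}_j)$ has an unnormalized density
\[
g_j(\phi)\;=\;\exp\!\Bigl(-\tfrac12(\phi-m_j)^{T} A\,(\phi-m_j)\Bigr)\prod_{v\in V^o}\mathbf{1}_{\{\phi(v)\in B_{j,v}\}},
\]
where $m_j$ is the harmonic extension of $f_j$ to $V^o$ and $A$ is the inverse covariance of $\phi_j$ on $V^o$ (a symmetric restriction of the discrete Laplacian with $A_{uv}\le 0$ for $u\neq v$). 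The goal is to check
\[
g_1(x\wedge y)\,g_2(x\vee y)\;\ge\;g_1(x)\,g_2(y),\qquad x,y\in\mathbb{R}^{V^o},
\]
which, via Holley's theorem, yields $\nu_1\preceq\nu_2$ as required.

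The verification splits cleanly into an indicator part and a Gaussian part. For the indicators, a coordinate-wise case analysis (according to whether $x_v\le y_v$ or $x_v>y_v$) using only $\inf B_{1,v}\le\inf B_{2,v}$ and $\sup B_{1,v}\le\sup B_{2,v}$ shows that $x\in\prod_v B_{1,v}$ and $y\in\prod_v B_{2,v}$ imply $x\wedge y\in\prod_v B_{1,v}$ and $x\vee y\in\prod_v B_{2,v}$. For the Gaussian part, setting $Q_j(\phi):=(\phi-m_j)^{T}A(\phi-m_j)$ and using the identities $(x\wedge y)-x=-(x-y)_+$ and $(x\vee y)-y=(x-y)_+$, the quantity $Q_1(x)+Q_2(y)-Q_1(x\wedge y)-Q_2(x\vee y)$ decomposes into (i) a purely quadratic part in $x,y$ which, after cancellation at coordinate pairs where $x,y$ are both larger or both smaller, reduces to $-2\sum_{i\sim j}A_{ij}(x_i-y_i)_+(y_j-x_j)_+\ge 0$ since $A_{ij}\le 0$ for neighbors; and (ii) a linear part equal to $2\bigl(A(m_2-m_1)\bigr)^{T}(x-y)_+$. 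Harmonicity of $m_2-m_1$ on $V^o$ together with the observation that $A$ sees only interior neighbors gives $A(m_2-m_1)(u)=\sum_{v\sim u,\,v\in\partial\til{K}}(f_2-f_1)(v)\ge 0$, so the linear piece is also non-negative.

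The main obstacle is this last computation: one must carefully interpret $A$ as the restriction of the full lattice Laplacian to $V^o$ and observe that, although $m_2-m_1$ is harmonic on $V^o$ with respect to the full Laplacian, applying the restricted operator $A$ to it leaves behind precisely the missing boundary fluxes, which are non-negative thanks to $f_2\ge f_1$ on $\partial\til{K}$. Once the two sign conditions combine to give the Holley inequality for $g_1,g_2$, the continuous-state Holley--FKG theorem (or, equivalently, an explicit monotone coupling built from a Glauber dynamics whose single-site updates are truncated Gaussians with same variance but ordered means and ordered truncation endpoints, compared via quantile coupling) produces a coupling of $\nu_1$ and $\nu_2$ with the required stochastic domination.
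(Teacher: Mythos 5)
Your proof is correct in substance but takes a genuinely different route from the paper's. The paper sidesteps the asymmetry between $\phi_1$ and $\phi_2$ entirely: it embeds both conditioned laws as constraints on a \emph{single} auxiliary mean-zero metric graph GFF $\phi'$ on $V'=V\cup\partial\til{K}$, encodes the boundary values $f_j$ themselves as singleton constraints $B_{j,v}=\{f_j(v)\}$ for $v\in\partial\til{K}$, and then approximates each indicator constraint by the smooth penalty $W^{(q)}_{B}(t)=q\,\dist(t,B)^4$. The only things needed are (a) supermodularity of the single Hamiltonian $H$, $H(r\wedge r')+H(r\vee r')\le H(r)+H(r')$, and (b) the elementary inequality $W^{(q)}_{B_{1,v}}(t\wedge t')+W^{(q)}_{B_{2,v}}(t\vee t')\le W^{(q)}_{B_{1,v}}(t)+W^{(q)}_{B_{2,v}}(t')$, after which Preston's theorem and a weak limit $q\to\infty$ finish the argument. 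In your approach, you work directly with the two conditional densities with \emph{different} centers $m_1,m_2$; this forces you to handle the cross-term between the means, which you do via the boundary-flux computation $A(m_2-m_1)(u)=\sum_{v\sim u,\,v\in\partial\til{K}}c_{uv}(f_2-f_1)(v)\ge 0$ on $V^o$ (with the appropriate conductances $c_{uv}$ of the electric network; in the paper these are $(4|u-v|)^{-1}$ rather than unit, but the sign is what matters). Both proofs are sound; the paper's version is cleaner precisely because it dodges this flux argument by folding $f_1\le f_2$ into the constraints rather than the means. One detail worth flagging in your write-up: invoking a ``continuous-state Holley--FKG theorem'' directly on densities with hard indicator constraints is slightly informal, since Preston's result requires smooth log-densities; you could patch this with the same $q\to\infty$ approximation the paper uses, or (as you already indicate in parentheses) fall back on the explicit Glauber/quantile coupling, which is fully rigorous.
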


\begin{proof}
The proof is the same as the proof of \cite[Equation (49)]{DingLi18}; we reproduce it here for completeness. Let $\nu_1$ be the law of $\phi_1$ given $\mathfrak E_1$ and $\nu_2$ be the law of $\phi_2$ given $\mathfrak E_2$. Further, let $\til{K}'$ be the closure of an open neighborhood of $\til{K}$ such that $\til{K}' \cap \mbb{Z}^2 = V$. Let $\til\phi'$ be a metric graph GFF on $\til{K}'$ with zero boundary condition, say, and $\phi'$ be the corresponding discrete GFF on the graph with vertex set $V' = V \cup \partial \til{K}$. We note that $\mu$, the law of $\phi'$, has density $\mu(dr) = \exp(-H(r))dr$ (where $r$ is a $|V'|$-dimensional vector) such that for every $r,r' \in \mbb{R}^{|V'|}$
\[
H(r \wedge r') + H(r \vee r') \leq H(r) + H(r'),
\]
where $\wedge$ and $\vee$ are taken coordinate by coordinate. Additionally, we note $\nu_j$ is simply the law of the restriction of $\phi'$ to $V$, conditioned on the event
\[
\mathfrak E_j' = \{\phi'(v) = f_j(v), v \in \partial \til{K}\} \cap \{\phi'(v) \in B_{j,v}, v \in V\}.
\]
We let $\mu_j$ be the law of $\phi'$ conditioned on $\mathfrak E'_j$. For $q > 0$ and $B \subset \mbb{R}$, we define the function
\begin{equation}\label{eq-Wq}
W^{(q)}_B(t) = q\dist(t,B)^4.
\end{equation}
We can then approximate $\mu_1$ and $\mu_2$ by probability measures satisfying the following
\begin{align*}
\mu^{(q)}_j(dr) &\propto \exp\left(-\sum_{v \in V \cup \partial \til{K}} W^{(q)}_{B_{j,v}}(r_v)\right)\mu(dr),
\end{align*}
where we set $B_{j,v} = [f_j(v)]$ for $v \in \partial \til{K}$. It is clear that for any $t,t' \in \mbb{R}$ and any $v \in V \cup \partial\til{K}$ we have
\[
W^{(q)}_{B_{1,v}}(t \wedge t') + W^{(q)}_{B_{2,v}}(t \vee t') \leq W^{(q)}_{B_{1,v}}(t) + W^{(q)}_{B_{2,v}}(t').
\]
Therefore, it follows from \cite{Preston74} that $\mu_2^{(q)}$ is stochastically smaller than $\mu_1^{(q)}$ for any $q > 0$. As $q \to \infty$, $\mu_i^{(q)}$ converges weakly to $\mu_i$, so it follows $\mu_2$ is stochastically smaller than $\mu_1$. Finally, this implies that $\nu_1$ is stochastically smaller than $\nu_2$.
\end{proof}
In light of this lemma (and the fact that $Y$ is an increasing function of $\phi$) we will assume, without loss of generality, that $I^- = D$ and $I^+$ is the set of all vertices in $V$ separated from $U$ by $D$. That is,
\[
I^+ = \{v \in V \,:\, \Hm(U,v;D \cup \{v\}) = 0\}.
\]
With this assumption, we are ready to begin the moment analysis. We start with the variance as the argument is easier.
\begin{lemma}\label{lm-Conditional variance}
\[
\var[Y \mid \mc{F}_{I^+}, \mathfrak E] \leq \var[Y \mid \mc F_{I^+}] \quad \text{a.s.}
\]
\end{lemma}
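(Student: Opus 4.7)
The plan is to combine the strong Markov property of the GFF with the Brascamp–Lieb variance inequality for log-concave distributions. By Theorem~\ref{Strong Markov property} (and the fact that optional sets for $\phi$ are optional for $\til\phi$), conditional on $\mc{F}_{I^+}$ the vector $(\phi(v))_{v \in D}$ is Gaussian with conditional mean $\mu$ that is a deterministic linear function of $\phi|_{I^+}$ and conditional covariance matrix $\Sigma$ that does not depend on $\phi|_{I^+}$. Setting $a_v = \Hm(U,v;I) \geq 0$ for $v \in D$, we have $Y = \sum_{v \in D} a_v \phi(v)$, and so $\var[Y \mid \mc F_{I^+}] = a^T \Sigma a$, which is deterministic.

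Under the reduction to $I^- = D$ and $I^+ = \{v \in V : \Hm(U,v; D \cup \{v\}) = 0\}$ justified before the statement, the event $\mathfrak E$ factors as $\{\phi(v) \geq 0 : v \in I^+\} \cap \{\phi(v) < 0 : v \in D\}$; the first event is $\mc{F}_{I^+}$-measurable. Therefore, on $\mathfrak E$, conditioning on $(\mc{F}_{I^+}, \mathfrak E)$ is equivalent to restricting the Gaussian $N(\mu, \Sigma)$ identified above to the negative orthant $A = \{x \in \R^{D} : x_v < 0 \text{ for all } v \in D\}$. The resulting density is
\begin{equation*}
f(x) \;\propto\; \one_A(x)\exp\!\left(-\tfrac{1}{2}(x-\mu)^T \Sigma^{-1} (x-\mu)\right),
\end{equation*}
which is log-concave since $A$ is convex.

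I would then invoke the Brascamp–Lieb variance inequality: if a probability density on $\R^D$ has the form $e^{-V}$ with $V$ convex and $\nabla^2 V \succeq \Sigma^{-1}$ in the interior of its support, then for every linear functional $\ell(x) = a^T x$ one has $\var_f(\ell) \leq a^T \Sigma a$. Applied to our $f$, this yields
\begin{equation*}
\var[Y \mid \mc F_{I^+}, \mathfrak E] \;\leq\; a^T \Sigma a \;=\; \var[Y \mid \mc F_{I^+}] \qquad \text{a.s.\ on $\mathfrak E$,}
\end{equation*}
as desired.

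The main technical obstacle is applying Brascamp–Lieb to the truncated Gaussian, whose density has a non-smooth indicator factor. I would handle this by the standard approximation of $\one_A$ by smooth log-concave functions, for instance replacing $\one_A$ by $\exp(-t \sum_{v \in D} (x_v \vee 0)^2)$ and letting $t \to \infty$; the Hessian of the negative log-density is then bounded below by $\Sigma^{-1}$ uniformly in $t$, the Brascamp–Lieb bound passes to the limit by weak convergence, and the variances converge to those of the truncated Gaussian. Once this step is in place, the Markov property and the reduction from Lemma~\ref{lm-Monotonicity to conditioning} make the rest of the proof immediate.
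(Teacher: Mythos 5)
Your proposal is correct and takes essentially the same route as the paper's proof: both identify the conditional law given $\mc{F}_{I^+}$ and $\mathfrak E$ as a Gaussian truncated to the negative orthant, approximate the indicator by a smooth log-concave exponential factor, invoke the Brascamp--Lieb variance inequality with the Hessian lower bound $\Sigma^{-1}$, and pass to the limit by weak convergence. The only minor discrepancy is your regularizer $\exp(-t\sum_{v\in D}(x_v\vee 0)^2)$, whose potential is only $C^1$ at $x_v=0$, whereas the paper takes $\exp(-q\sum_{v\in D} r_v^4\one_{\{r_v\ge 0\}})$ precisely to obtain a $C^2$ potential as the classical Brascamp--Lieb hypothesis requires; using a fourth power (or citing a version of Brascamp--Lieb stated for convex potentials with almost-everywhere Hessian bounds) aligns the two arguments exactly.
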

\begin{proof}
The proof is the same as that of \cite[Equation (56)]{DingLi18}; we reproduce it here for completeness. Let $\mathbf Y = (\phi(v))_{v \in D}$, $\mu$ be the law of $\mathbf Y$ given $\mc F_{I^+}$, and $\nu$ be the law of $\mathbf Y$ given $\mc F_{I^+}$ and $\mathfrak E$. Let $m$ and $\Sigma$ be the mean and variance of $\mathbf Y$ given $\mc F_{I^+}$ and note that $\Sigma$ is deterministic. Recall $\mu(dr) \propto \exp(-\frac{1}{2}(r - m) \Sigma^{-1} (r-m))dr$. For $q > 0$, we approximate $\nu$ by a probability measure $\nu^{(q)}$ satisfying the following
\[
\nu^{(q)} \propto \exp\left(-q\sum_{v \in D} r_v^4 \one_{r_v \geq 0}\right) \mu(dr).
\]
Since the second derivative of $f(t) = t^4 \one_{t \geq 0}$ is non-negative, we have that $\nu^{(q)}$ is of the form $\nu^{(q)}(dr) = \exp(-H(r))dr$ where $\inf_r \text{Hess}(H)(r) \geq \frac{1}{2}\Sigma^{-1}$. Therefore, by the Brascamp-Lieb inequality \cite{BrascampLieb76}, for a random vector $\mathbf Y^{(q)} \sim \nu^{(q)}$ and any $l \in \mathbb{R}^{|D|}$, we have $\var[l \cdot \mathbf Y^{(q)}] \leq \var[l \cdot \mathbf Y \mid \mc F_{I^+}]$. As $q \to \infty$, $\nu^{(q)}$ converges weakly to $\nu$, so we have 
\[
\var[l \cdot \mathbf Y \mid \mc F_{I^+}, \mathfrak E] \leq \var[l \cdot \mathbf Y \mid \mc F_{I^+}] \quad \text{a.s.}
\]
Since $Y$ is of the form $l \cdot \mathbf Y$, the conclusion follows.
\end{proof}
We obtain from this
\begin{corollary}\label{cor-Conditional variance}
\[
\var[Y \mid \mc{F}_{I^+}, \mathfrak E] \leq 16 \xi(U,I) \Hm(U,I)^2, \quad \text{\emph{a.s.}}
\]
\end{corollary}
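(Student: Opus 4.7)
The plan is to invoke Lemma~\ref{lm-Conditional variance} to reduce the task to bounding $\var[Y \mid \mc{F}_{I^+}]$. By the Markov property of the discrete GFF applied to the set $I^+$, the conditional covariance of $\phi$ on $V \setminus I^+$ is the Green's function $G^+$ of the random walk on $V \setminus I^+$ killed upon hitting $I^+ \cup \partial \til{K}$. Writing $\alpha_v = \Hm(U,v;I)$ and recalling that $\alpha_v = 0$ for $v \in I \setminus D$, this yields the exact identity
\[
\var[Y \mid \mc{F}_{I^+}] = \sum_{v,v'\in D} \alpha_v \alpha_{v'} G^+(v,v').
\]

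The strategy is to factor this as $\sum_{v\in D} \alpha_v \left( \sum_{v'\in D} \alpha_{v'} G^+(v,v') \right)$ and to combine two inputs that are already available: the pointwise bound $\alpha_{v'} \leq 4\xi(U,I)\Hm(U,I)$ for every $v'\in D$, derived in the paragraph preceding Proposition~\ref{Entropic repulsion result}, and the summation identity $\sum_{v\in D}\alpha_v = \Hm(U,D;I) = \Hm(U,I)$ (valid because $D$ is by definition the only part of $I$ that can be reached from $U$). Applying the pointwise bound to the inner variable will reduce matters to establishing
\[
\sum_{v'\in D} G^+(v,v') \leq 4 \qquad \text{for every } v\in D,
\]
and plugging this back in together with the outer summation identity produces the target inequality $\var[Y \mid \mc{F}_{I^+}] \leq 16\,\xi(U,I)\,\Hm(U,I)^2$.

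The substantive step, and the only place where a genuine estimate is used, is the geometric inequality displayed above. Its probabilistic content is that $\sum_{v'\in D} G^+(v,v')$ equals the expected total time a continuous-time random walk started at $v$ spends inside $D$ before being killed on $I^+ \cup \partial \til{K}$. The crucial structural hypothesis is that every vertex of $D$ has a neighbor in $I^+$; consequently, each time the walk sits at a $D$-vertex, the subsequent jump lands in the killing set with probability at least $1/4$. The number of distinct holding periods the walk spends in $D$ is therefore stochastically dominated by a Geometric$(1/4)$ random variable, and since each such holding time has mean $1$, the expected occupation time of $D$ is at most $4$. The main point of care in writing this out is to verify that the Markov property indeed produces $G^+$ with killing on $I^+ \cup \partial \til{K}$ rather than on some larger set; this is guaranteed by the simplification $I^- = D$ already adopted in the text together with the definition of $I^+$ as those vertices of $V$ separated from $U$ by $D$.
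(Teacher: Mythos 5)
Your proposal is correct. It differs from the paper's proof in how it handles the double sum $\sum_{v,v'\in D} \alpha_v \alpha_{v'} G^+(v,v')$, and the difference is worth noting. The paper first invokes a last-exit/reciprocity identity, $\sum_{v'\in D} \Hm(U,v';I)\,G_{\til K\setminus I^+}(v',v) = G_{\til K\setminus I^+}(U,v)$, to collapse the inner sum into a single Green's function evaluated at $U$, and then bounds $G_{\til K\setminus I^+}(U,D)\le 4\Hm(U,I)$ (the hitting probability $\Hm(U,I)$ and the geometric factor $4$ appear fused in this one estimate). You avoid the reciprocity step entirely: you pull the sup bound $\alpha_{v'}\le 4\xi(U,I)\Hm(U,I)$ out of the inner sum, bound the remaining row sum $\sum_{v'\in D}G^+(v,v')\le 4$ directly via the geometric argument started from $v\in D$, and recover the second $\Hm(U,I)$ factor from the exact identity $\sum_{v\in D}\alpha_v = \Hm(U,I)$ (which holds because $D$ is precisely the support of $\Hm(U,\cdot;I)$). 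Both arguments rest on the same key geometric input — each departure from a $D$-vertex reaches $I^+$ with probability at least $1/4$, so the expected occupation time of $D$ before killing is at most $4$ — and yield the same constant $16$; your decomposition is arguably the more streamlined, since the only non-definitional step is the geometric occupation-time bound, while the paper's route requires the additional reversibility identity. One small point worth making explicit if you wrote this out in full: the hypothesis that every vertex of $D$ has a neighbor in $I^+$ is preserved after the reduction (made right after Lemma~\ref{lm-Monotonicity to conditioning}) to $I^-=D$ and $I^+$ the separated set, because the original $I^+$ is contained in the new one; this is what makes your row-sum bound valid in the reduced setting.
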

\begin{proof}
We note
\begin{align*}
\var[Y \mid \mc{F}_{I^+}] &= \sum_{v,v' \in D}\Hm(U,v;I)\Hm(U,v';I) G_{\til{K} \setminus I^+}(v',v),\\
	&=\sum_{v \in D} \Hm(U,v;I)G_{\til{K} \setminus I^+}(U,v)
\end{align*}
where $G_{\til{K} \setminus I^+}$ is the Green's function on $\til{K} \setminus I^+$ and we have used the fact that
\[
\sum_{v' \in D} \Hm(U,v';I)G_{\til{K} \setminus I^+}(v',v) = G_{\til{K} \setminus I^+}(U,v), \,\, \forall v \in D.
\]
Note that 
\[
G_{\til{K} \setminus I^+}(U,D) \leq 4 \Hm(U,I),
\]
where we have used the fact that a random walk started on $D$ will hit $I^+$ with probability at least $1/4$ before returning to $D$. Therefore, we have
\[
\var[Y \mid \mc{F}_{I^+}] \leq 4 \Hm(U,I) \sup_{v \in D} \Hm(U,v;I) \leq 16 \xi(U,I) \Hm(U,I)^2.
\]
\end{proof}
To conclude, we need a high-probability bound on the conditional expectation of $Y$ given $\mc{F}_{I^+}$ and $\mathfrak E$. We begin with an auxiliary lemma.
\begin{lemma}\label{lm-Conditional expectation}
There exists a universal continuous, increasing function $g:[0,\infty) \to (0,\infty)$ such that for any $w \in \til{K}^o$ and any $\epsilon > 0$ the following holds. Let $\mathfrak E_w$ be the event
\[
\mathfrak E_w = \{\til\phi(w) = 0\}\cap \{\til\phi(u) > 0\, \forall u \in \til{K},\, \dist(u, \partial\til{K} \cup \{w\}) \geq \epsilon\}.
\]
For $v \in \til{K}$ such that $|v - w| \leq r$, we have
\[
\E[\phi(v) \mid \mathfrak E_w] \leq g(r) + \lambda.
\]
\end{lemma}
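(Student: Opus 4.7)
The plan is in three steps: reduce to constant boundary by monotonicity, decompose at $\{w\}$ using the Markov property to isolate the pointwise conditioning, and bound the residual conditional expectation via an exploration-martingale analysis tailored to the positivity event.

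First, by Lemma~\ref{lm-Monotonicity to conditioning} applied with $f_1 = f$ and $f_2 \equiv \lambda$ (together with conditioning intervals matching $\mathfrak E_w$), the law of $\til\phi$ given $\mathfrak E_w$ is stochastically dominated by the corresponding conditional law when $f \equiv \lambda$. Assume this, and set $\til\psi = \til\phi - \lambda$, a zero-boundary metric graph GFF. It then suffices to prove
\[
\E\bigl[\til\psi(v) \,\big|\, \til\psi(w) = -\lambda,\ \til\psi(u) > -\lambda\ \text{for all}\ u \in \til K_\epsilon\bigr] \leq g_0(r),
\]
where $\til K_\epsilon = \{u \in \til K : \dist(u, \partial \til K \cup \{w\}) \geq \epsilon\}$ and $g_0$ is some universal continuous, increasing function; then $g(r) = g_0(r)$ works.

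Next, by Theorem~\ref{Strong Markov property} applied to the optional set $\{w\}$, conditional on $\til\psi(w) = -\lambda$ the field on $\til K \setminus \{w\}$ is a metric graph GFF with boundary $0$ on $\partial \til K$ and $-\lambda$ on $w$. Decompose $\til\psi = m + \eta$, with $m(u) = -\lambda \mbb{P}_u(\rho_w < \tau) \in [-\lambda, 0]$ (where $\rho_w, \tau$ are the hitting times of $w$ and $\partial \til K$) and $\eta$ an independent zero-boundary metric graph GFF on $\til K \setminus \{w\}$. Since $v$ is at distance at most $r$ from the boundary point $w$ of $\til K \setminus \{w\}$, the Green's function estimate of Lemma~\ref{Greens scaling} (or its metric-graph analogue, which is comparable) gives the universal variance bound
\[
\var[\eta(v)] \le C\log(r+2).
\]
The positivity event becomes the constraint $\eta(u) > -\lambda(1 - \mbb{P}_u(\rho_w < \tau))$ on $\til K_\epsilon$, whose right-hand side lies in $[-\lambda, 0]$ and tends to $0$ as $u \to w$.

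To control $\E[\eta(v) \mid \mathrm{pos}]$ under this constraint, I will use the exploration-martingale framework of Section~\ref{subsec::pre_explorationmart}: for the observable $X_{\{v\}}$ and an exploration of $\{\eta \le -\lambda/2\}$ started from $\{w\} \cup \partial \til K$, formula~\eqref{eq::QV form} together with the Green's-function bound gives $\langle M \rangle_\infty \le G^{\til K \setminus \{w\}}(v, v) \le C\log(r+2)$. On the event that the exploration does not reach $v$ (which essentially captures the positivity event, modulo a controllable contribution near the boundary), Theorem~\ref{Time-change theorem} and Proposition~\ref{prop::Hitting times with drift} convert this into a Brownian-motion displacement of order $\sqrt{\log(r+2)}$ in expectation. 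Combined with the deterministic shift $|m(v)| \le \lambda$, this yields $g_0(r) = \lambda + C\sqrt{\log(r+2)}$.

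The main obstacle is transferring the martingale displacement bound to a bound on the conditional expectation under the full, non-constant, global positivity event rather than the simpler ``exploration does not reach $v$'' event. This will require combining the exploration analysis with the Brascamp--Lieb variance bound of Lemma~\ref{lm-Conditional variance} applied to $\eta$ under the positivity conditioning, and a careful decomposition of harmonic measures along the lines of Sections~\ref{subsec::MGFF_zerobc} and~\ref{subsec::MGFF_alternatingbc}. The universality of the bound in $\til K$ and $\epsilon$ ultimately rests on the fact that the residual variance of $\eta(v)$ is controlled by the single parameter $r$ via the distance from $v$ to $w$.
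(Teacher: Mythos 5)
Your reduction steps are reasonable: applying Lemma~\ref{lm-Monotonicity to conditioning} to replace the boundary data by the constant $\lambda$, decomposing at $\{w\}$ via the strong Markov property, and bounding $\var[\eta(v)] \lesssim \log(r+2)$ are all in the right spirit. But the core of the proof is missing, and you say so yourself: there is no way offered to convert the quadratic-variation bound $\langle M\rangle_\infty \lesssim \log(r+2)$ into a bound on the \emph{conditional expectation} $\E[\eta(v)\mid \mathrm{pos}]$ under the global positivity event. The exploration-martingale machinery in Sections~\ref{subsec::MGFF_zerobc} and~\ref{subsec::MGFF_alternatingbc} bounds \emph{probabilities of crossing/stopping events}; it gives hitting-time estimates for $M$, not moments of the field conditioned on a complex domain-wide constraint. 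Conditioning $\eta$ on positivity does not commute with the Dubins--Schwarz time change, and the ``exploration does not reach $v$'' event is not the positivity event, so Proposition~\ref{prop::Hitting times with drift} does not deliver what you want. The Brascamp--Lieb variance bound (Lemma~\ref{lm-Conditional variance}) only gives a variance estimate; it does not produce a mean estimate by itself. So the sentence beginning ``The main obstacle\dots'' names exactly the step that has to be proved, and the proposal leaves it unproved.

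There is also a quantitative warning sign: your target $g_0(r) = \lambda + C\sqrt{\log(r+2)}$ is almost certainly too small. The positivity conditioning forces genuine entropic repulsion that pushes the field up by an amount growing like the mean of a harmonic function of logarithmic growth near $w$, which is order $\log(r+2)$, not $\sqrt{\log(r+2)}$. The paper's argument in fact produces $g(r)\approx a\log(r+2)+b+R(a)+O(\sqrt{\log(r+2)})$. (This does not affect the applications, which only use $g(1)$, but it suggests the mechanism you have in mind cannot be the right one.)

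For contrast, the paper replaces the hard positivity conditioning by a different comparison: construct an explicit near-harmonic drift $h_{a,b}(u)\approx a\log(|u-w|+2)+b$ on $\widetilde{\mathbb{Z}}^2\setminus\{w\}$, compare $\widetilde\phi$ (conditioned on $\mathfrak E_w$) with the GFF $\widetilde\varphi_{a,b+\lambda}$ having boundary data $h_{a,b+\lambda}$ via Lemma~\ref{lm-Monotonicity to conditioning} and FKG, then choose $a,b$ so that $\mathbb P(\widetilde\varphi_{a,b}>0 \text{ on }\widetilde K)\ge 1/2$ \emph{unconditionally} (union bound on vertices plus a Brownian-bridge tail on each edge). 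Once the conditioning event has probability at least $1/2$, the conditional mean of $\widetilde\varphi_{a,b+\lambda}(v)$ is controlled by its unconditional mean $h_{a,b+\lambda}(v)\le a\log(r+2)+b+\lambda+R(a)$ and variance $\lesssim\log(r+2)$. This is the step your proposal lacks: a positivity-absorbing comparison that turns the constrained expectation into an unconstrained one.
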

\begin{proof}
We follow the proof of \cite[Lemma 6]{DingLi18}. First, by an argument similar to the proof of \cite[Equation (48)]{DingLi18}, there exists a function $R : \mathbb{R}^+ \to \mathbb{R}^+$ such that for any $a,b > 0$, there exists a function $h_{a,b}$ that is harmonic on $\til{\mbb{Z}}^2 \setminus \{w\}$ and satisfies
\[
\Big|h_{a,b}(u) - a\log\left(|u-w| + 2\right) - b\Big| \leq R(a), \quad \forall u \in \til{\mbb{Z}}^2.
\]
Let $\til\varphi_{a,b}$ be a Gaussian free field on $\til{K}$ with boundary condition $h_{a,b}$ on $\partial \til{K} \cup \{w\}$, and $\varphi_{a,b}$ be the restriction of $\til\varphi$ to $V$. Let $A \subset \til K$ be a countable, dense subset of $\{u \,:\, u \in \til K, \, \dist(u, \partial \til K \cup \{w\}) \geq \epsilon\}$, and $\{A_n\}_{n = 1}^\infty$ be an increasing sequence of finite subsets of $A$ such that $\cup_{n = 1}^\infty A_n = A$. Applying Lemma~\ref{lm-Monotonicity to conditioning} to $A_n$ and taking a limit, we have that for any $a,b > 0$ such that $h_{a,b}(u) > 0$ for all $u \in \til{\mbb{Z}}^2$, the following holds
\[
\E[\til\phi(v) \mid \mathfrak E_w] \leq \E[\til\varphi_{a,b+\lambda}(v) \mid \til\varphi_{a,b+\lambda}(u) > 0, \forall u \in \til {K},\, \dist(u, \partial\til{K} \cup \{w\}) \geq \epsilon].
\]
By the FKG inequality,
\[
\E[\til\phi(v) \mid \mathfrak E_w] \leq \E[\til\varphi_{a,b+\lambda}(v) \mid \til\varphi_{a,b+\lambda}(u) > 0, \forall u \in \til K].
\]
Next, we claim there exist universal constants $c_a,c_b >0$ such that for all $a\geq c_a$ and $b \geq c_b$,
\begin{equation}\label{eq-positive everywhere positive proba}
\mbb{P}(\til\varphi_{a,b}(u) > 0, \,\forall\,u \in \til{K}) \geq \frac{1}{2}.
\end{equation}
Indeed, it is straightforward to check by a union bound that there exist universal constants $c_a',c_b' >0$ such that for all $a\geq c_a'$ and $b \geq c_b'$
\[
\mbb{P}\left(\varphi_{a,b}(u) \geq \frac{h_{a,b}(u)}{2}, \,\forall\, u \in V\right) \geq \frac{3}{4}.
\]
Next, recall from the introduction that given $\mc{F}_V$, for each segment $e(u,v) \subset \til{K}$ with endpoints $u,v \in V \cup \partial \til{K} \cup \{w\}$ the restriction of $\til\varphi_{a,b}$ to $e(u,v)$ is a Brownian bridge of length $2|u-v|$ of a Brownian motion with variance 2 at time 1. Thus, applying \cite[Formula 1.3.8]{BrownianMotionHandbook} we see the following holds almost surely on $\{\til\varphi_{a,b}(u) \geq h_{a,b}(u)/2, \,\forall\,u \in V\}$,
\[
\mbb{P}(\til\varphi_{a,b}(x) > 0, \,\forall\, x \in e(u,v) \mid \mc{F}_V) = 1 - e^{-\frac{1}{2|u-v|}\varphi_{a,b}(u)\varphi_{a,b}(v)} \geq 1 - e^{-\frac{1}{8}h_{a,b}(u)h_{a,b}(v)}.
\]
Therefore, there exist universal constants $c_a'',c_b'' >0$ such that for all $a\geq c_a''$ and $b \geq c_b''$
\[
\mbb{P}\left(\til\varphi_{a,b}(u) > 0, \,\forall\,u \in \til{K} \mid \varphi_{a,b}(u) \geq \frac{h_{a,b}(u)}{2}, \,\forall\, u \in V\right) \geq \frac{2}{3}.
\]
Combining this with the second-to-last display and letting $c_j = c_j' \vee c_j''$ for $j \in \{a,b\}$ gives \eqref{eq-positive everywhere positive proba}. Note that $\var[\til\varphi_{a,b}(v)] \leq c\log(r+2)$ for some universal constant $c > 0$, and by assumption $\E[\til\varphi_{a,b}(v)] = h_{a,b}(v) \leq a \log(r+2) + b + R(a)$ so it follows that
\[
\E[\til\varphi_{c_a,c_b+\lambda}(v) \mid \til\varphi_{c_a,c_b+\lambda}(u) > 0, \,\forall\,u \in \til{K}] \leq g(r) + \lambda,
\]
for some universal (continuous, increasing) function $g$ (which can be calculated explicitly in terms of $c$, $c_a$, and $c_b$).
\end{proof}
Finally, combining the three lemmas we can prove the following corollary, which gives a high probability bound on $\E[Y \mid \mc F_{I^+}, \mathfrak E]$.
\begin{corollary}\label{cor-Conditional expectation}
There exists a constant $\Delta_1 = \Delta_1(\lambda) > 0$ and a universal function $r_1$ which satisfies $r_1(x) \to 0$ as $x \to 0$, such that the following holds
\[
\mbb{P}(\E[Y \mid \mc{F}_{I^+}, \mathfrak E] \leq - \Delta_1 \Hm(U,I) \mid \mathfrak E) \geq 1 - r_1(\xi(U,I)).
\]
\end{corollary}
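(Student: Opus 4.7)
Following the strategy of the proof of \cite[Lemma~6]{DingLi18}, the plan is to establish a pointwise upper bound $\E[\phi(v)\mid\mc F_{I^+},\mathfrak E]\le -\Delta_1$ at each $v\in D$ (outside an exceptional event), and then sum these weighted by $\Hm(U,v;I)$, using the decomposition
\[
\E[Y\mid\mc F_{I^+},\mathfrak E]=\sum_{v\in D}\Hm(U,v;I)\,\E[\phi(v)\mid\mc F_{I^+},\mathfrak E].
\]

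For the pointwise bound, I fix $v\in D$ and choose a neighbor $w=w(v)\in I^+$, which exists by the geometric hypothesis on $(U,I)$. Conditional on $\mc F_{I^+}$, by the strong Markov property the restriction of $\phi$ to $V\setminus I^+$ is a discrete GFF with prescribed boundary values on $I^+\cup\partial\til K$. I would then apply Lemma~\ref{lm-Monotonicity to conditioning} to replace, at $w$, the interval $[0,\infty)$ (encoded in $\mathfrak E$) by the single point $\{0\}$; since the infima are unchanged and the suprema decrease, this produces a stochastically smaller field and hence only decreases the conditional expectation $\E[\phi(v)\mid\mc F_{I^+},\mathfrak E]$. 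After this reduction, the configuration near $w$ fits (up to a sign flip on the $D$-side) exactly the setup of Lemma~\ref{lm-Conditional expectation}, with the zero-vertex role played by $w$ and the role of $\epsilon$ tuned so that $D\setminus\{w\}$ is covered by the positive-sign constraint. That lemma then yields a constant $\Delta_1=\Delta_1(\lambda)>0$, depending only on $\lambda$ through the universal function $g$, and an exceptional event $B_v$ on which the bound $\E[\phi(v)\mid\mc F_{I^+},\mathfrak E]\le-\Delta_1$ might fail.

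With the pointwise bound in hand, on the good event $\bigcap_{v\in D}B_v^c$ I immediately get $\E[Y\mid\mc F_{I^+},\mathfrak E]\le -\Delta_1\Hm(U,I)$. To control the probability of the exceptional set, I would use the inequality $\Hm(U,v;I)\le 4\xi(U,I)\Hm(U,I)$ valid for every $v\in D$ (by the definition of $\xi(U,I)$), together with the variance bound $\var[Y\mid\mc F_{I^+},\mathfrak E]\le 16\xi(U,I)\Hm(U,I)^2$ from Corollary~\ref{cor-Conditional variance}, to deduce via Chebyshev's inequality and a weighted union bound that $\mbb{P}\bigl(\bigcup_{v\in D}B_v\bigm|\mathfrak E\bigr)\le r_1(\xi(U,I))$ for some universal function $r_1$ with $r_1(x)\to 0$ as $x\to 0$.

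The main obstacle is the first step: obtaining a single-point entropic-repulsion bound with a constant $\Delta_1$ that is \emph{independent of the realized values of $\phi|_{I^+}$}. Since the harmonic extension at $v$ of the values of $\phi$ on $I^+$ can be arbitrarily large, the local conditioning $\phi(v)<0$ alone does not force $\E[\phi(v)\mid\mc F_{I^+},\mathfrak E]$ below $-\Delta_1$, and one must exploit the fact that under $\mbb{P}(\cdot\mid\mathfrak E)$ the values of $\phi|_{I^+}$ are atypically constrained. I expect the resolution to mirror the proof of Lemma~\ref{lm-Conditional expectation} itself: approximate the hard indicator conditionings by soft Gaussian-weighted penalties of the form $W^{(q)}_B$ as in \eqref{eq-Wq}, apply Brascamp--Lieb or FKG-type monotonicity, and pass to the limit $q\to\infty$; the universality of $g$ then allows $\Delta_1$ to depend only on $\lambda$.
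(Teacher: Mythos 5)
The overall frame (the decomposition $\E[Y\mid\mc F_{I^+},\mathfrak E]=\sum_{v\in D}\Hm(U,v;I)\,\E[\phi(v)\mid\mc F_{I^+},\mathfrak E]$, followed by Lemma~\ref{lm-Monotonicity to conditioning} plus Lemma~\ref{lm-Conditional expectation} plus a mean-variance Chebyshev step) is the right set of ingredients, but your plan to realize it through a \emph{pointwise} bound $\E[\phi(v)\mid\mc F_{I^+},\mathfrak E]\le-\Delta_1$ at each $v\in D$ is not what those ingredients can deliver, and the two specific mechanisms you propose for rescuing it do not close the gap.

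First, Lemma~\ref{lm-Conditional expectation} is an \emph{upper} bound of the wrong sign for your purposes: it gives $\E[\phi(v)\mid\mathfrak E_w]\le g(r)+\lambda$, which is a \emph{positive} ceiling, not a strictly negative floor. No amount of ``tuning $\epsilon$'' or sign flipping at $w$ turns that into $\E[\phi(v)\mid\cdot]\le-\Delta_1$. Second, and more fundamentally, a pointwise negative bound that is uniform over the realized boundary data on $I^+$ is false: given $\mc F_{I^+}$, $\phi(v)$ is Gaussian centered at $m_v=\E[\phi(v)\mid\mc F_{I^+},\mathfrak E^+]$, and when $m_v$ is large, $\E[\phi(v)\wedge 0\mid\mc F_{I^+},\mathfrak E^+]$ is only of order $-ae^{-bm_v^2}$, which can be arbitrarily close to $0$. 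You recognize this obstacle in your last paragraph, but the proposed fix (``approximate by soft penalties, apply Brascamp--Lieb, and pass to the limit'') is precisely how Lemma~\ref{lm-Conditional expectation} itself is proved and again only yields upper bounds of the form $\le g(r)+\lambda$, not a uniform negative constant. Your final ``weighted union bound over $v\in D$'' is also not supported by Corollary~\ref{cor-Conditional variance}, which controls $\var[Y\mid\mc F_{I^+},\mathfrak E]$, not the fluctuations of the individual $m_v$.

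What makes the argument in the paper work, and what is missing from your sketch, is the auxiliary observable $Y'=\sum_{w\in D'}\Hm(U,w;I^+)\phi(w)$ on the \emph{second} shell $D'$. One does not try to make each $\E[\phi(v)\mid\mc F_{I^+},\mathfrak E]$ negative; instead one (i) bounds $\E[\phi(v)\mid\mc F_{I^+},\mathfrak E]\le\E[\phi(v)\wedge 0\mid\mc F_{I^+},\mathfrak E^+]\le -ae^{-bm_v^2}$ via Lemma~\ref{lm-Monotonicity to conditioning}, (ii) shows the weighted average $\sum_{v\in D}\Hm(U,v;I)m_v$ equals $Y'$ plus a boundary term of order $\lambda\Hm(U,I)$, (iii) controls $Y'$ under $\mathfrak E$ in mean (Lemma~\ref{lm-Conditional expectation} applied on $D'$, giving the $g(1)+\lambda$ ceiling) and variance (Lemma~\ref{lm-Conditional variance} plus the $\xi$-bound), and (iv) uses Markov's inequality to conclude that with high conditional probability at least half of the harmonic-measure weight in $D$ sits on vertices with $m_v\le 2c$, for which the Gaussian tail gives $\E[\phi(v)\wedge 0\mid\cdots]\le -ae^{-4bc^2}$, yielding $\E[Y\mid\mc F_{I^+},\mathfrak E]\le-\tfrac{1}{2}ae^{-4bc^2}\Hm(U,I)$. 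This pass from a positive upper bound on $m_v$ to a uniformly negative bound on the weighted average is the step your proposal does not supply.
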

Before proving this, we show how it implies Proposition~\ref{Entropic repulsion result}. Let $\Delta = \Delta_1/2$ and $X = \mbb{P}(Y \leq - \Delta \Hm(U,I) \mid \mathfrak E, \mc{F}_{I^+})$. We have
\[
\mbb{P}(Y \leq -\Delta \Hm(U,I) \mid \mathfrak E) = \E[X \mid \mathfrak E].
\]
Additionally, we have by Chebyshev's inequality and Corollary~\ref{cor-Conditional variance} that there exists a function $r_2$ such that $r_2(x) \to 0$ as $x \to 0$ and
\[
X \geq 1 - r_2(\xi(U,I)), \quad \text{a.s. on } \{\E[Y \mid \mc{F}_{I^+}, \mathfrak E] \leq - \Delta_1 \Hm(U,I)\}.
\]
It follows that
\[
\mbb{P}(Y \leq -\Delta \Hm(U,I) \mid \mathfrak E) \geq 1 - r_1(\xi(U,I)) - r_2(\xi(U,I)).
\]
\begin{proof}[Proof of Corollary~\ref{cor-Conditional expectation}]
We have
\[
\E[Y \mid \mc F_{I^+}, \mathfrak E] = \sum_{v \in D} \Hm(U,v;I) \E[\phi(v)\mid \mc F_{I^+}, \mathfrak E].
\]
Let $\mathfrak E^+ = \{\phi(v) \geq 0\,:\, v \in I^+\}$. By Lemma~\ref{lm-Monotonicity to conditioning}, we have for any $v \in D$,
\[
\E[\phi(v)\mid \mc F_{I^+}, \mathfrak E] \leq \E[\phi(v) \mid \mc{F}_{I^+}, \mathfrak E^+, \phi(v)<0 ] \leq
	\E[\phi(v)\wedge 0 \mid \mc{F}_{I^+}, \mathfrak E^+].
\]
Note that given $\mc{F}_{I^+}$, $\phi(v)$ has a normal distribution with variance at least 1. Therefore, letting $m_v= \E[\phi(v) \mid \mc{F}_{I^+}, \mathfrak E^+]$, there exist universal constants $a,b > 0$ such that
\[
\E[\phi(v)\wedge 0 \mid \mc{F}_{I^+}, \mathfrak E^+] \leq -a e^{-b m_v^2}.
\]
Thus, it suffices to show that there exists a constant $c > 0$ such that the following holds
\begin{equation}\label{eq-bound on positive boundary}
\mbb{P}\left(\sum_{v \in D} \Hm(U,v;I)m_v \leq c\Hm(U,I) \mid \mathfrak E\right) \geq 1 - r_1(\xi(U,I)).
\end{equation}
Indeed, by Markov's inequality $\sum_{v \in D} \Hm(U,v;I)m_v \leq c\Hm(U,I)$ implies
\[
\sum_{v \in D} \Hm(U,v;I) \one_{m_v \leq 2c} \geq \frac{1}{2}\Hm(U,I),
\]
and consequently
\[
\E[Y \mid \mc F_{I^+}, \mathfrak E] \leq -\frac{a e^{-4b c^2}}{2}\Hm(U,I).
\]
Turning to the proof of \eqref{eq-bound on positive boundary}, let $Y'$ be the following random variable
\[
Y' = \sum_{w \in D'} \Hm(U,w; I^+)\phi(w).
\]
It follows from the definition of the harmonic measure that
\[
\sum_{v \in D}\Hm(U,v;I)m_v - Y' = \sum_{v \in D}\Hm(U,v;I)\sum_{w \in \partial \til{K} \setminus I^+}\Hm(v,w;I^+)\til\phi(w),
\]
where for $w \in \partial \til{K}$, $\Hm(v,w; I^+)$ is the probability that a metric graph Brownian motion started at $v$ exits $\til{K}$ through $w$ without hitting $I^+$. In particular, we have
\[
\sum_{v \in D}\Hm(U,v;I)m_v \leq Y' + \lambda \Hm(U,I).
\]
Therefore, to prove \eqref{eq-bound on positive boundary} it suffices to show
\[
\mbb{P}\left( Y' \leq c\Hm(U,I) \mid \mathfrak E\right) \geq 1 - r_1(\xi(U,I)).
\]
We show this by bounding the conditional mean and variance of $Y'$. First, by Lemma~\ref{lm-Monotonicity to conditioning}, we can replace $\mathfrak E$ by $\mathfrak E' = \{\phi(v) = 0,\, v \in D\} \cap \mathfrak E^+$. Next, Lemma~\ref{lm-Conditional variance} and a straightforward adaption of the proof of Corollary~\ref{cor-Conditional variance} give
\[
\var[Y' \mid \mathfrak E'] \leq \var[Y' \mid \phi(v) = 0,\, v \in D] \leq 4\xi(U,I)\Hm(U,I)^2.
\]
Finally, Lemma~\ref{lm-Conditional expectation} and Lemma~\ref{lm-Monotonicity to conditioning} (and the fact that every $w \in D'$ is adjacent to a vertex in $D$) give
\[
\E[\phi(w) \mid \mathfrak E'] \leq g(1) + \lambda, \quad \forall w \in D'.
\]
The conclusion then follows easily.
\end{proof}

\subsection{Zero boundary case}
\label{subsec::DGFF_zerobc}
In this section we prove \eqref{DGFF zero boundary result}. We let $M$ be an exploration martingale, as introduced in Section~\ref{subsec::pre_explorationmart}, corresponding to an exploration on $E^{\geq 0}_\delta$ from $\mc I_0 = [c_\delta,d_\delta)$ with $U = [a'_\delta,b'_\delta)$. Here the exploration is on $E^{\geq 0}_\delta$, but with the understanding that we do not explore any vertices on $\partial V_\delta \setminus \mc I_0$ (equivalently, the exploration is on $E^{> 0}_\delta$ with the understanding that $\mc I_1 = [c'_\delta,d'_\delta)$). With this setup, we have $M_0 = 0$. The main step of the proof consists of bounding $\xi_k = \xi(U, \mc I_k)$. In particular, we claim that there exists $c > 0$ such that
\begin{equation}\label{eq-Hm anti-concentration zero boundary}
\sup\{\xi_k \,:\, k \geq 1\} \leq \frac{c}{|\log(\delta)|}, \quad \text{a.s.}
\end{equation}
Before proving \eqref{eq-Hm anti-concentration zero boundary}, we show how it implies \eqref{DGFF zero boundary result}. To simplify notation, we will write $\{D^{> 0}(\mc I_0, U) = \infty\}$ for the event that $[a'_\delta,b'_\delta)$ is not connected to $[c'_\delta,d'_\delta)$ in $E^{>0}_\delta$. Assuming \eqref{eq-Hm anti-concentration zero boundary}, Proposition~\ref{Entropic repulsion result}  implies that there exists $\Delta > 0$ such that for any $\epsilon > 0$, there exists $\delta_0 = \delta_0(\epsilon,L) > 0$ such that 
\begin{equation}\label{eq:: dgff zero boundary repulsion}
\mbb{P}(M_\infty \leq - \Delta \Hm(U,\mc I_\infty) \mid D^{> 0}(\mc I_0, U) = \infty) \geq 1 - \epsilon, \quad \forall \delta \leq \delta_0.
\end{equation}
To conclude, we need to upper bound the quadratic variation. Recall that $\Hm_t$ is the harmonic measure on $\mc I_t \cup \partial V_\delta$. By \eqref{eq::QV form}
\begin{equation}\label{eq:: dgff zero boundary QV bound}
\langle M \rangle_t = \sum_{v \in \mc I_t}\Hm_t(U,v) G(v,U) \leq 4 \Hm(U,\mc I_t),
\end{equation}
where we have used the fact that a random walk started on $U$ has probability at least $1/4$ of hitting $\partial V_\delta$ before returning to $U$, which implies $G(v,U) \leq 4$ for all $v \in \til V_\delta$.

Finally, by Lemma~\ref{lm-Box Hm lower bound}, $\Hm(U,\mc I_0) \geq c$. Combining this with \eqref{eq:: dgff zero boundary repulsion} and \eqref{eq:: dgff zero boundary QV bound} and applying Theorem~\ref{Time-change theorem}, we see that there exists $\epsilon = \epsilon(L) > 0$ such that
\[
\mbb{P}(\exists t\geq 0 \text{ s.t. } M_t \leq -\Delta \Hm(U,I_t)) \leq \mbb{P}\left(\exists t \geq 0 \text{ s.t. } B_t \leq -\Delta\left(c \vee \frac{t}{4}\right)\right) \leq 1 - 2\epsilon. 
\]
Therefore
\[
\mbb{P}(D^{> 0}(\mc I_0, U) = \infty) \leq 1 - \epsilon, \quad \forall \delta\leq \delta_0.
\]

\begin{proof}[Proof of \eqref{eq-Hm anti-concentration zero boundary}]
Let $D_k = \{v \in \mc I_k \,:\, \Hm_k(U,v) > 0\}$, $\mc I_k' = \mc I_k \setminus D_k$, and $\Hm_k'$ be the harmonic measure on $\mc I_k' \cup \partial V_\delta$. Let $S$ be a random walk on $\delta \mbb{Z}^2$ killed on $\partial V_\delta$, $\tau_k= \min\{n \geq 1 \,:\, S_n \in U \cup \mc I'_k\}$, and $G'_k$ be the Green's function of the random walk killed on $\partial V_\delta \cup \mc I'_k$. We have for $u \in U$ and $v \in \partial\mc I'_k$
\[
\Hm_k(u',v) = \sum_{u \in U} G'_k(u',u) \mbb{P}_{u}(S_{\tau_k} = v).
\] 
By reversibility, $\mbb{P}_{u}(S_{\tau_k} = v) = \mbb{P}_v(S_{\tau_k} = u)$. Noting also that $G_k'$ is symmetric, summing over $u'$ gives
\[
\Hm_k(U,v) = \sum_{u \in U} \mbb{P}_v(S_{\tau_k} = u)G'_k(u,U) \leq 4\mbb{P}_v(S_{\tau_k} \in U),
\]
where we have used the fact that $G'_k(u,U) \leq G(u,U) \leq 4$. There exists a universal constant $p > 0$ such that for any integer $n\geq 0$ a random walk started at $v$ will complete a loop around $B(v,2^n)$ before exiting $B(v,2^{n+1})$ (here $B(v,r)$ is the open Euclidean ball of radius $r$ around $v$). Since any loop around $v \in \mc I'_k$ that is contained in $V$ must intersect $\mc I'_k$, it follows that there exist $A, \alpha > 0$ such that the following holds for all $k \geq 1$ and $v \in \partial \mc I'_k$,
\[
\mbb{P}_v(S_{\tau_k} \in U) \leq A \left(\frac{\delta}{\dist(v, U)}\right)^\alpha.
\]
It remains to lower bound $\Hm(U, \mc I_k)$ in terms of $\dist(\mc I_k, U)$. We have already noted that there exists $c > 0$ such that $\Hm(U,\mc I_k) \geq \Hm(U, \mc I_0) \geq c$ for all $k$. On the other hand, a simple adaptation of the proof of \eqref{eq-path to boundary Harmonic measure} shows that there exists a constant $c' > 0$ such that whenever $\dist(\mc I_k, U) \leq (L \wedge 1)/2$ we have
\[
\Hm(U, \mc I_k) \geq c' \log\left(\frac{L \wedge 1}{\dist(U,\mc I_k) + \delta}\right).
\]
Combining the lower bounds on $\Hm(U, \mc I_k)$ with the upper bound on $\Hm_k(U,v)$ gives the desired conclusion.
\end{proof}

\subsection{Alternating boundary case}
\label{subsec::DGFF_alternatingbc}
In this section, we prove~\eqref{Alternating boundary difference result}. We recall the inclusion
\[
\left\{ [a_\delta, b_\delta) \stackrel{\til\phi_\delta \geq 0}{\longleftrightarrow} [c_\delta,d_\delta)\right\} \subset
	\left\{ [a_\delta, b_\delta) \stackrel{\phi_\delta \geq 0}{\longleftrightarrow}[c_\delta,d_\delta) \right\}.
\]
Therefore, it suffices to find an event $\mathfrak E$ such that
\[
\mathfrak E \subset
	\left\{ [a_\delta, b_\delta) \stackrel{\phi_\delta \geq 0}{\longleftrightarrow} [c_\delta,d_\delta)\right\} \setminus
	\left\{ [a_\delta, b_\delta) \stackrel{\til\phi_\delta \geq 0}{\longleftrightarrow}[c_\delta,d_\delta) \right\},
\]
and $\mbb{P}(\mathfrak E) \geq c$.

As usual, we provide a sketch of the proof here, leaving the technical arguments to the end of the section. By \eqref{Control on negative cluster} we can assume that $\til{\AB}_{\delta,0} \cap \Pi = \emptyset$ and work conditional on $\mc{F}_{\til{\AB}_{\delta,0}}$. We let $\mbb{P}^+$ denote the law of $\til{\phi}_{\delta}$ given $\mc{F}_{\til{\AB}_{\delta,0}}$. The first step of the proof is to upper bound the probability that $\til{\phi}_{\delta}$ contains a positive horizontal crossing. We claim that there exists a constant $c > 0$ such that the following holds almost surely on $\{\til{\AB}_{\delta,0} \cap \Pi = \emptyset\}$
\begin{equation}\label{Control on positive cluster}
\mbb{P}^+\left([a_\delta,b_\delta) \stackrel{\til \phi_\delta \geq 0}{\centernot\longleftrightarrow}[c_\delta,d_\delta)\right) \geq c.
\end{equation}
Consequently, we assume $[a_\delta,b_\delta)$ is not connected to $[c_\delta,d_\delta)$ and work conditionally on $\mc{F}_{\til{\AB}_{\delta,0} \cup \til{\A}^l_{\delta,0}}$. We let $\mbb{P}^{0,l}$ be the law of $\til{\phi}_{\delta}$ given $\mc{F}_{\til{\AB}_{\delta,0} \cup \til{\A}^l_{\delta,0}}$ and $\E^{0,l}$ be the expectation with respect to $\mbb{P}^{0,l}$. We claim that there exists a further constant $c' > 0$ such that the following holds almost surely on $\{\til{\AB}_{\delta,0} \cap \Pi = \emptyset\} \cap \left\{[a_\delta,b_\delta) \stackrel{\til \phi_\delta \geq 0}{\centernot\longleftrightarrow}[c_\delta,d_\delta)\right\}$
\begin{equation}\label{Discrete GFF RSW}
\mbb{P}^{0,l} \left( \til{\A}_{\delta,0}^l \stackrel{\phi_\delta \geq 0}{\longleftrightarrow}[c_\delta,d_\delta)\right) \geq c'.
\end{equation}
This concludes the proof. We now turn to proving the two technical claims.

\subsubsection{Proof of \eqref{Control on positive cluster}}
As in Section~\ref{subsec::alternating MGFF proof}, we let $\til{V}_\delta^+$ be the connected component of $\til{V}_\delta \setminus \til{\AB}_{\delta,0}$ containing $[a_\delta,b_\delta)$ and $[c_\delta,d_\delta)$, $\Hm^+$ be the harmonic measure on $\til V_\delta^+$, and $G^+$ be the Green's function of a metric graph Brownian motion killed on $\partial\til V_\delta^+$. The proof proceeds by analyzing an exploration martingale $M$, as introduced in Section~\ref{subsec::pre_explorationmart}, corresponding to an exploration of $\til E^{\geq 0}_\delta$ from $\mc I_0 = [a_\delta,b_\delta) \cup [c_\delta,d_\delta)$ with observable $X_U$ corresponding to the following set $U$. For each $v \in \partial \til{\AB}_{\delta,0}$, we let $\eta_v = \dist(v, \delta\mbb{Z}^2)$. Since $|\partial \til{\AB}_{\delta,0}| < \infty$ and $\partial \til{\AB}_{\delta,0} \cap V_\delta = \emptyset$ almost surely, we have
\[
\eta = \min\{\eta_v \,:\, v \in \partial\til{\AB}_{\delta,0}\} > 0, \quad \text{a.s.}
\]
Then, we let $U$ be the following set
\[
U = \left\{u \,:\, u \in \til V_\delta^+,\, \Big|\Re(u) - \frac{L}{2}\Big| \leq \frac{3 L}{8},\, \dist(u, \til{\AB}_{\delta,0}) = \frac{\eta}{2}\right\}.
\]
The following processes will be useful in the analysis
\[
\pi_t = \Hm^+_t(U,\til{\AB}_{\delta,0}), \quad \mu_t = \Hm^+_t(U,\mc I_t),
\]
where as usual $\Hm^+_t$ is the hermonic measure on $\mc I_t \cup \partial \til V_\delta^+$. We have $M_t \geq 0$ for all $t$ and $M_0 = \lambda \mu_0$, so in particular $ M_t - M_0 \geq -\lambda \mu_0$. To conclude the proof, we need to upper bound $\mu_0$ and lower bound the quadratic variation of $M$. In particular, we claim that there exist constants $c,c' > 0$ such that the following holds
\begin{align*}
\mu_0 &\leq c \frac{\eta}{\delta},\\
\langle M\rangle_\infty &\geq c' \left(\frac{\eta}{\delta}\right)^2, \quad \text{a.s. on } \left\{[a_\delta,b_\delta) \stackrel{\til \phi_\delta \geq 0}{\longleftrightarrow}[c_\delta,d_\delta)\right\}.
\end{align*}
Before proving the claim, we note that it implies by an application of Theorem~\ref{Time-change theorem} and Proposition~\ref{prop::Hitting times with drift}
\[
\mbb{P}\left([a_\delta,b_\delta) \stackrel{\til \phi_\delta \geq 0}{\longleftrightarrow}[c_\delta,d_\delta)\right) \leq \mbb{P}\left(\inf_{0 \leq t \leq c'}B_t \geq - c\lambda\right) \leq 1 -c''.
\]
For the proof, consider the electric network with vertex set $W = \mbb{Z}^2 \cup U \cup \partial \til V_\delta^+$ where two vertices $u,v \in W$ are connected with an edge of conductance $C_{u,v} = (4|u-v|/\delta)^{-1}$ if there exists a path in $\til{ \mbb{Z}}^2$ connecting $u$ to $v$ which does not contain any other points in $W$. For this section, we will write $u\sim v$ if $u$ and $v$ are connected in this network, and we let $C_u = \sum_{v \sim u} C_{u,v}$. Note that $G^+$ (as defined above) is the Green's function of the continuous time simple random walk on $W$ killed on $\partial \til V_\delta^+$. To simplify some definitions, we take $S$ to be a discrete-time simple random walk on $W$ (not killed on $\partial \til V_\delta^+$).

We now turn to proving the upper bound on $\mu_0$. Let $\tau_0 = \min\{n \geq 1 \,:\, S_n \in U \cup \partial \til V_\delta^+\}$ be the hitting time of $U \cup \partial \til V_\delta^+$. We have by a last exit decomposition
\begin{align*}
\Hm^+(u, \mc I_0) &= \sum_{u' \in U}\sum_{v \in \mc I_0} C_{u'}G^+(u,u')\mbb{P}_{u'}(S_{\tau_0} = v),\\
	&= \sum_{u' \in U}\sum_{v \in \mc I_0} C_v G^+(u',u) \mbb{P}_v(S_{\tau_0}=u')
\end{align*}
where we have used the fact that $C_{u'}\mbb{P}_{u'}(S_{\tau_0} = v) = C_v\mbb{P}_v(S_{\tau_0}=u')$ and $G^+(u,u') = G^+(u',u)$. Summing over $u \in U$ gives
\[
\mu_0 = \sum_{u' \in U}\sum_{v \in \mc I_0} C_v\mbb{P}_v(S_{\tau_0} = u')G^+(u',U) \leq \left[\max_{u' \in U}G^+(u',U)\right] \sum_{v \in \mc I_0} C_v\mbb{P}_v(S_{\tau_0} \in U).
\]
It is straightforward to show that $G^+(u',U) \leq 2\eta/\delta$ for all $u' \in U$, and it is clear that $C_v = 1$ for all $v \in \mc I_0$ such that $\mbb{P}_v(S_{\tau_0} \in U) > 0$. Further, we note that $\mbb{P}_v(S_{\tau_0} \in U) \leq c \delta$ since it is bounded by the probability that a random walk on $\delta \mbb Z$ (started from the origin) reaches $L/8$ before returning to zero. It follows that $\mu_0 \leq c \eta/\delta$ as claimed.

Next, we turn to proving the lower bound on the quadratic variation. Begin by noting that if there exists a horizontal crossing, then $\mc I_\infty$ contains a nearest neighbor path $\gamma \subset  V_\delta^+$ crossing the following strip
\[
\Pi' = \left\{z \,:\, \Big|\Re(z) - \frac{L}{2}\Big| \leq \frac{L}{4}\right\}.
\]
We claim that for any such $\gamma$
\[
\var^+(X_U) - \var^+(X_U \mid \mc{F}_\gamma) = \sum_{v \in \gamma} \Hm^+(U,v;\gamma) G^+(v,U) \geq c \left(\frac{\eta}{\delta}\right)^2.
\]
For the proof, note that there exists $c > 0$ such that $\Hm^+(v,U) \geq c$ for all $v \in \gamma$, where $c$ is independent of $\gamma$ and $v$. This follows from the fact that a random walk started in $\Pi'$ will exit $V_\delta$ before exiting the strip $\{z \,:\, |\Re(z) - L/4| \leq 3L/8\}$ with probability bounded uniformly away from zero. Such a walk will necessarily hit $U$ before $\partial \til V_\delta$. Next, it is straightforward to show that $G^+(u,U) \geq G^+(u,u) \geq \eta/\delta$ for all $u \in U$, so we conclude $G^+(v,U) \geq c \eta/\delta$,  Consequently
\[
\var^+(X_U) - \var^+(X_U \mid \mc{F}_\gamma)  \geq c \frac{\eta}{\delta} \Hm^+(U,\gamma).
\] 
To lower bound the harmonic measure, the idea is to replace $\gamma$ with a line. More precisely, we take
\[
\gamma^* = \left\{w \in V_\delta \,:\, \Im(w) \in \left[\frac{1}{2}, \frac{1}{2} + \delta\right), \, \Big| \Re(w) - \frac{L}{2} \Big| \leq \frac{L}{4}\right\},
\]
and note that there exists a universal constant $c > 0$ such that for $w \in \gamma^*$, $\Hm^+(w,\gamma) \geq c$. This follows from the fact that a random walk started at $w$ will hit the line $\{z \,:\, \Re(z) = L/2\}$ before $\partial \til V_\delta^+$, and then exit $V_\delta$ before exiting $\Pi'$ is bounded uniformly away from 0. Therefore, we have $\Hm^+(U,\gamma) \geq c\Hm^+(U,\gamma^*)$. To bound $\Hm^+(U,\gamma^*)$ from below, we proceed by a last-exit decomposition. Let $\tau = \min\{n \geq 1\,:\, S_n \in U \cup \partial \til V_\delta^+ \cup \gamma^*\}$. By the same argument given in the proof of the upper bound on $\mu_0$,
\[
\Hm^+(U,\gamma^*) = \sum_{w \in \gamma^*}\sum_{u \in U} C_w \mbb{P}_w(S_{\tau} = u)G^+(u,U) \geq \frac{\eta}{\delta} \sum_{w \in \gamma^*}\mbb{P}_w(S_\tau \in U),
\]
where we have used the fact that $C_w = 1$ for all $w \in \gamma^*$ and $G^+(u,u) \geq \eta/\delta$ for all $u \in U$. Finally, by Lemma~\ref{lm-Box Hm lower bound} we have
\[
\sum_{w \in \gamma^*} \mbb{P}_w(S_\tau \in U) \geq c.
\]
This concludes the proof.

\subsubsection{Proof of \eqref{Discrete GFF RSW}}
Let $\til V_\delta^{l,0}$ be the connected component of $\til V_\delta \setminus(\til{\A}_{\delta,0}^l \cup \til{\AB}_{\delta,0})$ containing $[c_\delta,d_\delta)$, and $V_\delta^{l,0} = \til V_\delta^{l,0} \cap \delta \mbb{Z}^2$. Let $W$ be the following set of vertices
\[
W= \{v \in V_\delta^{l,0} \,:\, \dist(v, \til{\A}_{\delta,0}^l) \leq \delta\}.
\]
We note that 
\[
\left\{[c_\delta,d_\delta) \stackrel{\phi_\delta \geq 0}{\longleftrightarrow} \til{\A}_{\delta,0}^l \right\} = 
\left\{[c'_\delta,d'_\delta) \stackrel{\phi_\delta \geq 0}{\longleftrightarrow} W \right\}.
\]
Consequently, we would like to explore $E^{\geq 0}_\delta \cap V_\delta^{l,0}$ from $W$, but in order to apply Proposition~\ref{Entropic repulsion result} we need to start our exploration ``one step'' away from the boundary. That is, letting $\mc I_0$ and $U$ be given by
\begin{align*}
\mc I_0 &= \left\{v \in V_\delta^{l,0} \,:\, \delta < \dist(v, \til{\A}_{\delta,0}^l) \leq 2\delta\right\},\\
U &= [c'_\delta,d'_\delta)\cap\left\{z \,:\, \Big|\Im(z) - \frac{1}{2}\Big| \leq \frac{1}{8}\right\},
\end{align*}
we will show that
\begin{equation}\label{eq-Rough boundary near crossing}
\mbb{P}^{l,0} \left(U \stackrel{\phi_\delta \geq 0}{\longleftrightarrow} \mc I_0 \right) \geq c.
\end{equation}
Assuming this result for now, we show how to conclude the proof of \eqref{Discrete GFF RSW}. Recall that $\mc G_{V_\delta^{l,0} \setminus W}$ is the $\sigma$-field generated by $\{\sign(\phi_\delta(v)) \,:\, v \in V_\delta^{l,0} \setminus W\}$ and note that we have
\[
\left\{U \stackrel{\phi_\delta \geq 0}{\longleftrightarrow} \mc I_0\right\} \in \mc G_{V_\delta^{l,0} \setminus W}.
\]
Note also that if $U$ is connected to $\mc I_0$, there exists a random vertex $w^* \in W$ such that if $\phi_\delta(w^*) > 0$, then $U$ is connected to $W$ (and thus to $[a_\delta,b_\delta)$). Note that $w^*$ is measurable with respect to $\mathcal G_{V_\delta^{l,0} \setminus W}$. Therefore, it suffices to show that there exists a universal constant $c > 0$ such that for all $w \in W$ 
\[
\mbb{P}^{l,0}\left(\phi_\delta(w) > 0 \mid \mc G_{V_\delta^{l,0} \setminus W}\right) \geq c \quad \text{a.s.}
\]
It will then follow that
\[
\mbb{P}^{l,0}\left(\phi_\delta(w^*) > 0 \mid \mc G_{V_\delta^{l,0}},\, U \stackrel{\phi_\delta \geq 0}{\longleftrightarrow} \mc I_0\right) \geq c \quad \text{a.s.}
\]
By Lemma~\ref{lm-Monotonicity to conditioning}, it suffices to show that for $\mathfrak E = \{\phi_\delta(v) < 0,\, \forall v \in V_\delta^{l,0}\setminus W\}$,
\[
\mbb{P}^{l,0}\left(\phi_\delta(w) > 0 \mid \mathfrak E\right)  \geq c, \quad \forall w \in W.
\]
The proof is as follows. By the Markov property of the GFF, we have for any $w \in W$,
\begin{align*}
\phi_\delta(w) = Y_w + Z_w, 
\end{align*}
where $Z_w$ is normal, mean zero, and independent of $\mc F_{V_\delta^{l,0} \setminus W}$, and $Y_w = \E^{l,0}[\phi_\delta(w) \mid \mc{F}_{V_\delta^{l,0} \setminus W}]$. It is straightforward to show that there exist $c,c' > 0$ such that
\[
c \frac{\dist(w, \til{\A}_{\delta,0}^l)}{\delta} \leq \var^{l,0}[Z_w] \leq \var^{l,0}[\phi_\delta(w)] \leq c' \frac{\dist(w,\til{\A}_{\delta,0}^l)}{\delta}.
\]
Finally, by Lemma~\ref{lm-Conditional expectation} there exists $c'' >0$ such that
\begin{align*}
\E^{l,0}\left[Y_w \mid \mc{G}_{V_\delta^{l,0} \setminus W}\right] &\geq -c'' \frac{\dist(w,\til{\A}_{\delta,0}^l)}{\delta}.
\end{align*}
Combining these bounds we obtain $\mbb{P}(\phi_\delta(w) > 0 \mid \mathfrak E) \geq c$ as promised. This concludes the proof.

\begin{proof}[Proof of \eqref{eq-Rough boundary near crossing}]
We assume without loss of generality that $\dist(U,\mc I_0) \geq 10\delta$. Otherwise the probability of connection is lower bounded by $2^{-20}$, say.

As usual, we consider an exploration martingale $M$ corresponding to an exploration of $E^{\geq 0}_\delta \cap V_{\delta}^{l,0}$ from $\mc I_0$ with observable $X_U$, with $U$ as above. The goal is to bound both the value of the martingale and its quadratic variation. As usual, we introduce the following processes
\begin{align*}
\pi_t = \Hm^{l,0}_t(U,[c_\delta,d_\delta)), \quad \mu_t = \Hm^{l,0}_t(U,\mc I_t).
\end{align*}
We will use Proposition~\ref{Entropic repulsion result} to show that there exists $c > 0$ such that for any $\epsilon$, there exists $\delta_0 = \delta_0(\epsilon) > 0$ such that for $\delta \leq \delta_0$,
\begin{equation}\label{eq-Entropic repulsion rough to straight}
\mbb{P}\left(M_\infty - M_0 \leq -c\mu_0 - \lambda(\pi_0 - \pi_\infty)  \mid D^{\geq 0}_\delta(\mc I_0, U) = \infty\right) \geq 1 - \epsilon.
\end{equation}
Postponing the proof of this claim until the end of the section, we show how it can be used to obtain the desired result. Fist, we bound the quadratic variation as follows
\begin{align*}
\langle M \rangle_t &= \sum_{v \in \mc I_t} \Hm^{l,0}_t(U,v)G^{l,0}_0(v,U) \\
&\leq 16\sum_{v \in \mc I_t}\Hm^{l,0}_t(U,v)\Hm_0^{l,0}(v,[c_\delta,d_\delta))\\
&=16( \pi_0 - \pi_t)
\end{align*}
where we used the fact that a random walk started on $U$ hits $[c_\delta,d_\delta)$ before returning to $U$ with probability at least $1/4$ to obtain 
\[
G^{l,0}_0(v,U) \leq 4\Hm^{l,0}_0(v,U) \leq 16 \Hm^{l,0}_0(v,[c_\delta,d_\delta)).
\]
Next, we note that by Lemma~\ref{lm-Box Hm lower bound} we have for some $c > 0$
\[
\mu_0 \geq \Hm^+(U, [a_\delta,b_\delta)) \geq c.
\]
Thus, for $\epsilon >0$ small enough and $\delta < \delta_0$,
\[
\mbb{P}^{l,0}\left(U \stackrel{\phi_\delta \geq 0}{\centernot\longleftrightarrow} \mc I_0 \right)\leq \mbb{P}\left(\inf \left\{B_t + \frac{\lambda}{16}t\,:\, t \geq 0\right\} \leq -c\right) + \epsilon \leq 1 - c'.
\]
Turning to the proof of \eqref{eq-Entropic repulsion rough to straight}, we begin by upper bounding $M_\infty$. To this end, we let $\xi_k = \xi(U,\mc I_k)$ and note as in \eqref{eq-Hm anti-concentration zero boundary} that there exists $c > 0$ such that
\[
\sup\{\xi_k \,:\, k \geq 0\} \leq \frac{c}{|\log(\delta)|}.
\]
The proof is the same as that of \eqref{eq-Hm anti-concentration zero boundary} so we omit further details. We can then apply Proposition~\ref{Entropic repulsion result} with
\[
I^+ = \{v \in \mc I_\infty \,:\, \sign(\phi_\delta(v)) = 1\}, \quad I^- = \{v \in \mc I_\infty \,:\, \sign(\phi_\delta(v)) = -1\},
\]
and obtain that there exists $\Delta = \Delta(\lambda) > 0$ such that for any $\epsilon > 0$ there exists $\delta_1 > 0$ such that for $\delta \leq \delta_1$
\[
\mbb{P}^{l,0}\left(M_\infty \leq - \Delta \mu_\infty + \lambda \pi_\infty \mid U \stackrel{\phi_\delta \geq 0}{\centernot\longleftrightarrow} \mc I_0 \right) \geq 1 - \epsilon.
\]
Next, we lower bound $M_0$. Trivially, $\E^{l,0}[M_0] = \lambda \pi_0$. For the variance, we have
\[
\var^{l,0}[M_0] = \sum_{v\in \mc I_0}\Hm^{l,0}_0(U,v)G^{l,0}(v,U) \leq 64\xi_0 \mu_0^2 \leq \frac{c}{|\log(\delta)|}\mu_0^2,
\]
where we have used the facts 
\begin{align*}
G^{l,0}(U,\mc I_0) &\leq 16 \Hm^{l,0}(U,\mc I_0),\\
\Hm_0^{l,0}(U,v) &\leq 4 \xi_0 \Hm_0^{l,0}(U,\mc I_0), \quad \forall v \in \mc I_0.
\end{align*}
Therefore, we conclude that for any $\epsilon > 0$, there exists $\delta_2$ such that for $\delta < \delta_2$,
\[
\mbb{P}^{l,0}\left(M_0 \geq - \frac{\Delta}{2}\mu_0 + \lambda \pi_0\right) \geq 1 - \epsilon.
\]
Combining the two bounds, and noting that $\mu_t$ is increasing we obtain that for any $\epsilon > 0$ and $\delta < \delta_1 \wedge \delta_2$
\[
\mbb{P}^{l,0}\left(M_\infty - M_0 \leq -\frac{\Delta}{2} \mu_0 - \lambda(\pi_0 - \pi_\infty) \mid U \stackrel{\phi_\delta \geq 0}{\centernot\longleftrightarrow} \mc I_0 \right) \geq 1 - 2\epsilon.
\]
This concludes the proof of \eqref{eq-Entropic repulsion rough to straight}.
\end{proof}

\section{(Non)-Existence of closed pivotal edges for metric graph percolation}
In this section we prove Theorem~\ref{thm::Pivotal edges result}. First, we recall some definitions and notation. Let $\tilde{\phi}_{\delta}$ be a metric graph GFF on $\tilde{V}_{\delta}$ with alternating boundary condition~\eqref{eqn::alternatingbc}. Denote by $E_{\delta}$ the edge set of the nearest-neighbor graph on $V_{\delta}$. We let $\omega: E_{\delta}\to \{0,1\}$ be such that $\omega(e)=1$ if $\tilde{\phi}_{\delta}(u)\ge 0$ for all $u\in I_e$ and $\omega(e)=0$ otherwise. In the former case, we say the edge is open, in the latter that it is closed. The notations $\omega^e$ and $\omega_e$ are defined in~\eqref{eqn::def_bondperco}. An edge $e$ is pivotal if there is a horizontal crossing in $\omega^e$ but there is no such crossing in $\omega_e$. Note that the pivotality of an edge does not depend on the status of the edge itself. We say that $e$ is a closed pivotal edge (resp. open pivotal edge) if $e$ is pivotal and it is closed (resp. open). Our goal is to show that the probability that there exists a closed pivotal edge decays to 0 as $\delta \to 0$. In particular, we want to show that there exists a constant $c > 0$ (depending only on $L$) such that the following holds
\begin{equation}\label{no closed pivotals}
\mbb{P}(\text{there exists a closed pivotal edge}) \leq \frac{c}{\sqrt{|\log(\delta)|}}.
\end{equation}
By symmetry, it suffices to consider only edges $e$ on the left half of $V_\delta$. That is, if we let
\[
E_\delta^l = \left\{e \in E\,:\, \exists\, x \in I_e,\, \Re(x) \leq \frac{L}{2}\right\},
\]
it suffices to show
\[
\mbb{P}\left(\text{there exists a closed pivotal edge in $E_\delta^l$}\right) \leq \frac{c}{\sqrt{|\log(\delta)|}}.
\]
In fact, we will show that
\[
\mbb{P}\left(\text{there exists a closed pivotal edge in $E_\delta^l$}\mid \mc{F}_{\til{\A}_{\delta,0}^l}\right) \leq \frac{c}{\sqrt{|\log(\delta)|}} \quad \text{a.s.}
\]
Note that if there is a positive horizontal crossing in the metric graph (that is, $\til{\A}_{\delta,0}^l \cap [c_\delta,d_\delta) \neq \emptyset$) then there are no closed pivotal edges and that this event is measurable with respect to $\mathcal F_{\tilde A_{\delta,0}^l}$. Therefore, we assume from now on that this is not the case.

To simplify notation, we let $\mbb{P}^{l}$ denote the law of $\til{\phi}_{\delta}$ given $\mc{F}_{\til{\A}_{\delta,0}^l}$ and $\E^{l}$ denote the expectation with respect to $\mbb{P}^{l}$. Similarly, we let $\til{V}_\delta^{l}$ be the connected component of $\til{V}_\delta \setminus \til{\A}_{\delta,0}^l$ containing $[c_\delta,d_\delta)$ and $V_\delta^{l}= \til{V}_\delta^{l} \cap \delta \mbb{Z}^2$. Note that, given that there is no horizontal crossing in the metric graph, the set of closed pivotal edges consists of all edges with one endpoint in $\tilde \A_{\delta,0}^l$ and the other in $\tilde \A_{\delta,0}^r$. Therefore, given $\mathcal{F}_{\tilde A_{\delta,0}^l}$ the set of pivotal edges is increasing in $\tilde \A_{\delta,0}^r$ and therefore increasing in $\{\til\phi_\delta(v) \,:\, v \in \til V_\delta^l\}$. We will therefore assume from now on that $\til\phi_{\delta}$ has zero boundary condition on $[b_\delta,c_\delta) \cup [d_\delta, a_\delta)$.

The proof is essentially the same as that of \eqref{Control on positive cluster} and consists of analyzing an exploration martingale $M$, as introduced in Section~\ref{subsec::pre_explorationmart}, corresponding to an exploration on $\til{E}^{\geq 0}_\delta$ from $\mc I_0 = [c_\delta,d_\delta)$ with observable given by the following set. For each $v \in \partial\til{\A}_{\delta,0}^l \setminus \partial \til V_\delta$, we let $\eta_v = \dist(v, V_\delta^l)$. Since  $|\partial\til{\A}_{\delta,0}^l| < \infty$ and $\partial\til{\A}_{\delta,0}^l \cap \delta \mbb Z^2 \subset \partial \til V_\delta$ almost surely, we have
\[
\eta = \min\{ \eta_v \,:\, v \in \partial\til{\A}_{\delta,0}^l \setminus \partial \til V_\delta > 0\} \quad \text{a.s.}
\]
We then set the observable set $U$ to be
\[
U = \left\{u \in \til V_\delta^l \,:\, \dist(u,\partial \til V_\delta^l) = \frac{\eta}{2}, \, \Re(u) \leq \frac{3L}{4} \right\}.
\]
Without loss of generality, we assume that $U$ is non-empty (and indeed that there exists $u \in U$ with $\Re(u) \leq (L/2)+\delta$) since otherwise there can be no pivotal edges in $E_\delta^l$. As usual, we let $\Hm^l_t$ be the harmonic measure on $\mc I_t \cup \partial \tilde V_\delta^l$ and $\mu_t = \Hm^l_t(U, \mc I_t)$. As usual, since $\mc I_0 \subset \partial \tilde V_\delta^l$ we write $\Hm^l$ for $\Hm^l_0$. Note that $M_0 = \lambda \mu_0$ and $M_t \geq 0$ for all $t$, so $M_t - M_0 \geq - \lambda \mu_0$. We claim that there exist constants $c, c' > 0$ such that
\begin{align*}
\mu_0 &\leq c \frac{\eta}{\delta},\\
\langle M \rangle_\infty &\geq c' \left(\frac{\eta}{\delta}\right)^2 |\log(\delta)|,\quad \text{a.s. on } \{\text{there exists a closed pivotal edge in } E_\delta^l\}.
\end{align*}
Assuming this claim for now, an application of Theorem~\ref{Time-change theorem} then gives (after rescaling the Brownian motion)
\begin{align*}
\mbb{P}^l(\text{there exists a closed pivotal edge in } E_\delta^l) &\leq \mbb{P}\left(\inf_{0 \leq t \leq c' |\log(\delta)|} B_t \geq - c\lambda\right) \\
&\leq \frac{c''}{\sqrt{|\log(\delta)|}}.
\end{align*}
To prove the claim we introduce the electric network with vertex set $W = \delta\mbb{Z}^2 \cup U \cup \partial \til V_\delta^l$ where two vertices $u,v \in W$ are connected with an edge of conductance $C_{u,v} = (4|u-v|/\delta)^{-1}$ if there exists a path in $\delta\til{ \mbb{Z}}^2$ connecting $u$ to $v$ which does not contain any other points in $W$. For the rest of this proof, we will write $u\sim v$ if $u$ and $v$ are connected in this network, and we let $C_u = \sum_{v \sim u} C_{u,v}$. We let $G^l$ be the Green's function corresponding to the continuous time simple random walk on $W$ killed on $\partial \til V_\delta^l$, but take $S$ to be a discrete-time simple random walk on $W$ (not killed on $\partial \tilde V_\delta^l$). 

For the upper bound on $\mu_0$, we let $\tau = \min\{n \geq 1 \,:\, S_n \in U \cup \partial \til V_\delta^l\}$. By the arguments given in the proof of \eqref{Control on positive cluster},
\[
\Hm^l(U,\mc I_0) = \sum_{u \in U}\sum_{v \in \mc I_0} C_v \mbb{P}_v(S_{\tau} = u) G^l(u,U) \leq 2\frac{\eta}{\delta} \sum_{v \in \mc I_0} \mbb{P}_v(S_{\tau} \in U),
\]
where we used the facts that $C_v = 1$ for all $v \in \mc I_0$ such that $\mbb{P}_v(S_{\tau} \in U) > 0$ and $G^l(u,U) \leq 2 \eta/\delta$ for all $u \in U$. Finally, there exists $c > 0$ such that $P_v(S_{\tau} \in U) \leq c \delta$ since this probability is upper bounded by the probability that a one-dimensional simple random walk hits $L/4$ before returning to zero. Since $|\mc I_0| \geq c'/\delta$ we conclude that $\mu_0 \leq c \eta/\delta$ as promised.

To lower bound the quadratic variation, we note that if there exists a closed pivotal edge in $E_\delta^l$, then $\mc I_\infty$ contains a nearest-neighbor path $\gamma$ in $V_\delta^l$ satisfying the following conditions. First, that it crosses the following strip
\begin{equation}
\Pi' = \left\{z \,:\, \frac{L}{2} + \delta \leq \Re(z) \leq \frac{5L}{8}\right\}.
\end{equation}
Second, that there exists $v^* \in \gamma$ and $w \in \til{\A}_{\delta,0}^l \cap \delta \mbb{Z}^2$ satisfying $\Re(w) \leq L/2 + \delta$ and $|w-v^*| = \delta$. We claim that there exists $c > 0$ such that for any such path
\[
\var^l(X_U) - \var^l(X_U \mid \mc{F}_\gamma) \geq c \left(\frac{\eta}{\delta}\right)^2 |\log(\delta)|.
\]
Indeed, we have
\begin{align*}
\var^l(X_U) - \var^l(X_U \mid \mc{F}_\gamma) &= \sum_{v \in \gamma}\Hm(U,v; \gamma \cup \partial \tilde V_\delta^l)G^l(v,U), \\
&\geq c \frac{\eta}{\delta} \Hm(U,\gamma; \gamma \cup \partial \tilde V_\delta^l),
\end{align*}
where we used the fact that $\Hm^l(v,U) \geq c$ for all $v \in \gamma$ since it is lower bounded by the probability that a simple random walk on $\delta \mathbb Z$ started at $\delta \lceil 5L/8\delta \rceil$ hits 0 before $\delta \lfloor 3L/4\delta \rfloor$ and the fact that $G(u,U) \geq \eta/\delta$ for all $u \in U$. Therefore, we want to show that there exists $c > 0$ such that for any path $\gamma$ that satisfies the conditions above,
\[
\Hm(U,\gamma; \gamma \cup \partial \tilde V_\delta^l) \geq c \frac{\eta}{\delta}|\log(\delta)|.
\]
The proof is essentially identical to that of \eqref{eq-path to boundary Harmonic measure}. For $n\geq0$ an integer, we let $Q_n$ be the box of radius $r_n = 2^{-n-2}L$ centered at $v^*$, $A_n = Q_n \setminus Q_{n+1}$, $U_n = U \cap A_n$, and $\gamma_n = \gamma \cap A_n$. We note that $N = \max\{n \geq 1\,:\, r_n \geq 1000\delta\}$ satisfies $N \geq c |\log(\delta)|$. Finally, we claim that there exists a universal constant  $c > 0$ such that
\[
\Hm(U_n,\gamma_n; \gamma \cup \partial \tilde V_\delta^l) \geq c \frac{\eta}{\delta},\quad 1 \leq n \leq N\,.
\]
For the proof, we begin as usual with a last-exit decomposition. Take $1 \leq n \leq N$ and let $\tau = \min\{k \geq 1 \,:\, S_k \in U \cup \gamma \cup \partial \til V_\delta^l\}$ (recall $S$ is a random walk on $W$). We have
\begin{align*}
\Hm(U_n,\gamma_n;\gamma \cup \partial \tilde V_\delta^l) &= \sum_{u \in U_n}\sum_{u' \in U} \sum_{v \in \gamma_n} G^l(u,u')C_{u'} \mbb{P}_{u'}(S_\tau = v),\\
&=\sum_{u \in U_n}\sum_{u' \in U}\sum_{v \in \gamma_n} G^l(u',u)C_v \mbb{P}_v(S_\tau = u'),\\
&\geq \frac{\eta}{\delta}\sum_{v \in \gamma_n} \mbb P_v(S_\tau \in U).
\end{align*}
From here, the details of the proof are the same as for the proof of \eqref{eq-path to boundary Harmonic measure}. Let $v_1 \in \gamma_n$ be such that $2r_n/3 - \delta < |v_1 - v^*|_{\ell_\infty} \leq 2r_n/3$, $v_2 \in \gamma_n$ be such that $5r_n/6 \leq |v_2 - v^*|_{\ell_\infty} < 5r_n/6 + \delta$, and $Q_{n,2}$ be the box of radius $r_n/12$ centered at $v_2$. With these choices, the distance between $v_1$ and $Q_{n,2}$, and between $\{v_1\} \cup Q_{n,2}$ and $A_n^c$ are of the same order as $r_n$. Therefore, letting $\mathfrak E$ be the event that $S$ hits $Q_{n,2} \cap \gamma$ and then hits $U$ before exiting $A_n$, there exists a universal constant $c > 0$ such that $\mbb{P}_{v_1}(\mathfrak E) \geq c$. By a last exit decomposition
\[
\mbb{P}_{v_1}(\mathfrak E) \leq \sum_{v \in \gamma} G^{l,*}(v_1,v)\mbb{P}_v(S_\tau \in U_n),
\]
where $G^{l,*}(v_1,v)$ is the expected number of visits a random walk started at $v_1$ makes to $v$ after hitting $Q_{n,2}\cap \gamma$ and before exiting $A_n$. Finally, it follows easily from \cite[Theorem 4.4.4, Proposition 4.6.2]{LawlerLimic10} that $G^{l,*}(v_1,\cdot)$ is uniformly bounded. That is, there exists a universal constant $c > 0$ such that
\[
G^{l,*}(v_1,v) \leq c,\quad v \in A_n \cap \delta \mathbb{Z}^2.
\]
This concludes the proof.

\section{Limits of crossing probabilities}
\label{sec::scaling_limits}
In this section, we discuss level lines of discrete GFF and metric graph GFF. 
Fix the rectangle $R_L=(0,L)\times (0,1)$ and $V_{\delta}=R_L\cap \delta\Z^2$. Recall that the four corners of $R_L$ are denoted by $a, b, c, d$ in counter-clockwise order with $b=0$, and that the four corners of $V_{\delta}$ are denoted by $a_{\delta}, b_{\delta}, c_{\delta}, d_{\delta}$ in counter-clockwise order with $b_{\delta}$ closest to the origin, i.e. $b_{\delta}=(\delta, \delta)\in\delta\Z^2$. 

Consider discrete GFF $\phi_{\delta}$ on $V_{\delta}$ with either zero boundary condition or alternating boundary condition~\eqref{eqn::alternatingbc}.  In either case, we say that the vertices on $[a_{\delta}, b_{\delta})$ have positive value and the vertices on $[b_{\delta}, c_{\delta})$  have negative value.  
The level line $\eta_{\delta}$ of $\phi_{\delta}$ is defined as follows: it starts from $b_{\delta}^{\diamond}=(0, 3\delta/2)$, lies on the dual lattice of $\delta\Z^2$ and turns at every dual-vertex in such a way that it has vertices with positive value on its left and negative value on its right. If there is an indetermination when arriving at a dual-vertex, turn left. The level lines stop when they hit the boundary segments $[c_{\delta}, d_{\delta})$ or $[d_{\delta}, a_{\delta})$. 
We will consider the convergence of $\eta_{\delta}$ in this section. We use the following metric on planar curves: suppose $\eta_1$ and $\eta_2$ are unparameterized continuous curves, then 
\[d(\eta_1, \eta_2)=\inf_{u_1, u_2}\sup_{t\in [0,1]}|\eta_1(u_1(t))-\eta_2(u_2(t))|,\]
where the inf is over increasing homeomorphisms $u_1, u_2: [0,1]\to [0,1]$. 
Using techniques in~\cite{SchrammSheffield09}, we have the following convergence of the law on $\eta_{\delta}$. 

\begin{theorem}\label{thm::crossingproba_cvg_dgff}
There exists $\lambda_0>0$ such that, when $\lambda=\lambda_0$, the law of $\eta_{\delta}$ converges weakly to $\eta\sim\SLE_4(-2; -2)$ in $R_L$ from $b$ to $d$ with force points $(a; c)$ as $\delta\to 0$. As a consequence, \eqref{eqn::crossingproba_limit} holds.
\end{theorem}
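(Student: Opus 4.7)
The plan is in two stages: first, establish the convergence $\eta_\delta\to\eta$ via the Schramm--Sheffield scheme~\cite{SchrammSheffield09}; second, derive~\eqref{eqn::crossingproba_limit} as an SLE martingale computation. For the convergence, we first show precompactness of $\{\eta_\delta\}_{\delta>0}$ in the curve topology. This is done via Aizenman--Burchard--type estimates: the probability that $\eta_\delta$ makes an unnecessary macroscopic crossing of an annulus decays polynomially in the aspect ratio, a consequence of the strong Markov property (Theorem~\ref{Strong Markov property}) combined with harmonic measure bounds of the type in Lemmas~\ref{lm-Box Hm upper bound} and~\ref{lm-Box Hm lower bound}, applied at multiple scales.

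Next, for a subsequential limit $\eta$ parametrized by half-plane capacity in $\HH$ via the conformal map $\varphi$, let $W_t$ be its driving function and $V^a_t, V^c_t, V^d_t$ the Loewner images of $\varphi(a),\varphi(c),\varphi(d)$. Conditioning on an initial segment of $\eta_\delta$ and invoking the Markov property, the GFF on the complement has boundary values $\pm\lambda$ on the two sides of the explored slit and the original alternating values on the unexplored boundary. Choosing a specific bounded harmonic function of this boundary data as observable, one obtains a discrete martingale whose scaling limit, computed via the Loewner equation, forces $W_t$ to satisfy
\[
dW_t = 2\,dB_t - \frac{2}{W_t - V^a_t}\,dt - \frac{2}{W_t - V^c_t}\,dt, \qquad dV^x_t = \frac{2}{V^x_t - W_t}\,dt,
\]
i.e.\ the driving SDE of $\SLE_4(-2;-2)$ from $\varphi(b)$ to $\varphi(d)$ with force points $(\varphi(a);\varphi(c))$. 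The value $\lambda_0$ is the unique $\lambda$ making the Brownian prefactor equal $\sqrt{\kappa}=2$, i.e.\ the DGFF ``height gap'' across level lines identified in~\cite{SchrammSheffield09}. Tightness together with this unique characterization of the limit yields full weak convergence.

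For~\eqref{eqn::crossingproba_limit}, observe first that up to an event of vanishing probability, $\{[a_\delta,b_\delta)\stackrel{\phi_\delta\geq 0}{\longleftrightarrow}[c_\delta,d_\delta)\}$ coincides with $\{\eta_\delta\text{ exits }R_L\text{ through }[c_\delta,d_\delta)\}$: if $\eta_\delta$ exits through $[c_\delta,d_\delta)$, the portion of that arc above the exit point together with $[a_\delta,b_\delta)$ lies in the positive cluster to the left of $\eta_\delta$; conversely, if $\eta_\delta$ exits through $[d_\delta,a_\delta)$, both endpoints of $\eta_\delta$ lie on negative arcs, so no positive path can join the two sides of $\eta_\delta$, precluding a positive crossing from $[a_\delta,b_\delta)$ to $[c_\delta,d_\delta)$. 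By Part~1, this probability converges to the corresponding topological event for $\eta$. To evaluate it, introduce the cross-ratio
\[
\Phi_t = \frac{(W_t - V^a_t)(V^d_t - V^c_t)}{(V^c_t - V^a_t)(V^d_t - W_t)}.
\]
A direct It\^o computation along the $\SLE_4(-2;-2)$ driving SDE above shows that the drift of $\Phi_t$ vanishes identically: the $\kappa=4$ It\^o correction is exactly cancelled by the drift contributions from the $\rho_L=\rho_R=-2$ force terms (this algebraic cancellation is precisely what singles out $\rho=-2$ at $\kappa=4$). Thus $\Phi_t$ is a bounded $[0,1]$-valued martingale, with terminal values $1$ on the crossing event and $0$ on its complement. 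Optional stopping then gives $\Phi_0$ as the limiting crossing probability, which is the right-hand side of~\eqref{eqn::crossingproba_limit}.

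The main obstacle is Part~1: executing the Schramm--Sheffield program in the alternating-boundary setting, in particular proving the tightness at all scales and identifying the correct discrete martingale observable whose scaling limit yields the $\SLE_4(-2;-2)$ driving SDE. Once the convergence is in hand, Part~2 reduces to a conformal reduction together with an It\^o expansion and an optional-stopping argument, which are routine.
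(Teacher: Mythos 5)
Your overall architecture matches the paper's: convergence of level lines via the Schramm--Sheffield scheme, followed by an SLE martingale argument with optional stopping. Your Part~2 is correct and is affinely equivalent to the paper's: the paper maps $d\mapsto\infty$ and uses $\hat W_t = (2W_t - V^L_t - V^R_t)/(V^R_t - V^L_t)$, while your cross-ratio $\Phi_t$ satisfies $\hat W_t = 2\Phi_t - 1$; both are driftless for $\SLE_4(-2;-2)$ and yield the right-hand side of~\eqref{eqn::crossingproba_limit} by optional stopping. Also your identification of the crossing event with the side through which $\eta_\delta$ exits is in fact an exact (deterministic) equivalence for each fixed $\delta$, not merely one up to vanishing probability.

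The one place where your route genuinely diverges from the paper, and where I would flag a gap, is tightness. You propose precompactness of $\{\eta_\delta\}$ in the curve topology via Aizenman--Burchard-type annulus-crossing estimates, asserted to follow from the strong Markov property together with Lemmas~\ref{lm-Box Hm upper bound} and~\ref{lm-Box Hm lower bound} ``applied at multiple scales.'' Those lemmas are random-walk hitting estimates, not level-line crossing estimates, and annulus-crossing bounds for DGFF level lines with alternating boundary data are not established anywhere in the paper; proving them directly is itself substantial. The paper sidesteps this by reproducing SS09's route precisely: first the key discrete drift and variance estimates for the coordinate-changed driving function (the analogue of \cite[Proposition~4.2]{SchrammSheffield09}, giving~\eqref{eqn::keyestimates}), then local-uniform convergence of driving functions via \cite[\S4.4]{SchrammSheffield09}, then upgrading to Hausdorff and finally curve convergence via \cite[\S4.6--4.8, Lemma~4.16]{SchrammSheffield09}. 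Subsequential precompactness in Hausdorff metric is automatic; the hard work in SS09 is in identifying the limit and upgrading the topology, not in proving crossing estimates. If you keep the Aizenman--Burchard step you need to actually supply it; otherwise, as the paper does, you can let SS09 carry the tightness argument.
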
 

To prove Theorem~\ref{thm::crossingproba_cvg_dgff}, we will first introduce SLE process in Section~\ref{subsec::sle} and then complete the proof in Section~\ref{subsec::scaling_limits_dgff}. We will discuss the limit of crossing probabilities in~\eqref{DGFF zero boundary result} at the end of Section~\ref{subsec::scaling_limits_dgff}. We will discuss the limit of crossing probabilities in~\eqref{MGFF alternating boundary result} in Section~\ref{subsec::scaling_limits_mgff}.

\subsection{Preliminaries on SLE}
\label{subsec::sle}

We denote by $\HH$ the upper-half plane. We call a compact subset $K$ of $\overline{\HH}$ an $\HH$-hull if $\HH\setminus K$ is simply connected. By Riemann's mapping theorem, there exists a unique conformal map $g_K$ from $\HH\setminus K$ onto $\HH$ with the normalization $\lim_{z\to\infty}|g_K(z)-z|=0$. With such normalization, we say that $g_K$ is normalized at $\infty$. 

We consider the following collections of $\HH$-hulls. First, consider families of conformal maps $(g_t, t\ge 0)$ obtained by solving the Loewner equation: for each $z\in\HH$, 
\[\partial_t g_t(z)=\frac{2}{g_t(z)-W_t},\quad g_0(z)=z,\]
where $(W_t, t\ge 0)$ is a real-valued continuous function, which we call the driving function. Second, for each $z\in\HH$, define the swallowing time $T_z$ to be \[\sup\left\{t\ge 0: \inf_{s\in[0,t]}|g_s(z)-W_s|>0\right\}.\] Finally, denote by $K_t$ the closure of $\{z\in\HH: T_z\le t\}$. Then $g_t$ is the conformal map from $\HH\setminus K_t$ onto $\HH$ normalized at $\infty$. The collection of $\HH$-hulls $(K_t, t\ge 0)$ is called a Loewner chain parameterized by the half-plane capacity. 

For $\kappa\ge 0$, Schramm Loewner Evolution, denoted by $\SLE_{\kappa}$, is the Loewner chain with driving function $W_t=\sqrt{\kappa}B_t$ where $(B_t, t\ge 0)$ is a standard one-dimensional Brownian motion. It was proved in \cite{RohdeSchrammSLEBasicProperty} that  $(K_t, t\ge 0)$ is almost surely generated by a continuous transient curve, i.e. there exists a continuous curve $\eta$ such that, for each $t\ge 0$, the set $\HH\setminus K_t$ is the unbounded connected component of $\HH\setminus \eta[0,t]$ and $\lim_{t\to\infty}|\eta(t)|=\infty$. In this section, we focus on $\kappa\in (0,4]$ when the curve is simple. 

$\SLE_{\kappa}(\rho^L; \rho^R)$ is a variant of $\SLE_{\kappa}$ process where one keeps track of two extra marked points on the boundary. 
Let $y^L\le 0\le y^R$ and $\rho^L, \rho^R\in \R$. An $\SLE_{\kappa}(\rho^L; \rho^R)$ process with force points $(y^L; y^R)$ is the Loewner chain driven by $W_t$ that solves the following system of SDEs: 
\begin{align*}
dW_t=\sqrt{\kappa}d B_t+\frac{\rho^Ldt}{W_t-V_t^{L}}+\frac{\rho^{R}dt}{W_t-V_t^{R}},\quad 
dV_t^{L}=\frac{2dt}{V_t^{L}-W_t},\quad dV_t^{R}=\frac{2dt}{V_t^{R}-W_t},  
\end{align*}
where $W_0=0, V_0^L=y^L, V_0^R=y^R$. 
It turns out that such process exists for all time if $\rho^L, \rho^R>-2$. 
If $\rho^L\le -2$ or $\rho^R\le -2$, it exists up to
the first time that the process swallows $y^L$ or $y^R$. Moreover, the process is generated by continuous curve up to and including the same time. 

The above SLE processes are defined in $\HH$, for other simply connected domains we define SLE process via conformal image. Suppose $\Omega$ is a non-trivial simply connected domain and $a, b, c, d$ are four boundary points lying on locally connected components in counterclockwise order. Then $\SLE_{\kappa}(\rho^L; \rho^R)$ in $\Omega$ from $b$ to $d$ with force points $(a; c)$ is $\varphi^{-1}(\eta)$ where $\eta$ is an $\SLE_{\kappa}(\rho^L; \rho^R)$ from $0$ to $\infty$ with force points $(y^L; y^R)$ and $\varphi$ is any conformal map from $\Omega$ onto $\R$ such that $\varphi(a)=y^L\le \varphi(b)=0\le \varphi(c)=y^R<\varphi(d)=\infty$. 

In this section, we focus on $\SLE_4(\rho^L; \rho^R)$ with $\rho^L=\rho^R=-2$ and force points $y^L<0<y^R$. Then the above SDEs become 
\begin{equation}\label{eqn::SDE_sle4}
dW_t=2dB_t+\frac{2dt}{V_t^L-W_t}+\frac{2dt}{V_t^R-W_t}, \quad dV_t^{L}=\frac{2dt}{V_t^{L}-W_t},\quad dV_t^{R}=\frac{2dt}{V_t^{R}-W_t}. 
\end{equation}
Denote by $\eta$ the continuous curve corresponding to the Loewner chain driven by $W$. 
Let $T$ be the first time that the process swallows $y^L$ or $y^R$. We will calculate the probability $\PP(\eta(T)=y^R)$. 
Set 
\[\hat{W}_t=\frac{2W_t-V_t^L-V_t^R}{V_t^R-V_t^L}.\]
Then $T=\inf\{t: \hat{W}_t\in\{-1, +1\} \}$. It\^{o}'s formula gives 
\[d\hat{W}_t=\frac{4dB_t}{V_t^R-V_t^L}.\]
In particular, $(\hat{W}_{t\wedge T}, t\ge 0)$ is a bounded martingale. Optional stopping theorem gives $\E[\hat{W}_T]=\hat{W}_0$. From there, we obtain
\begin{equation}\label{eqn::sle_crossingproba}
\PP\left(\eta(T)=y^R\right)=\PP\left(\hat{W}_T=1\right)=\frac{-y^L}{y^R-y^L}. 
\end{equation}
This gives~\eqref{eqn::crossingproba_limit} assuming the convergence of the level line in Theorem~\ref{thm::crossingproba_cvg_dgff}.  

Below, we perform a time change in the system which will be useful in Section~\ref{subsec::scaling_limits_dgff}. This is analog of~\cite[Section~4.5]{SchrammSheffield09}. 
Define a new time parameter, for $t\ge 0$,
\[s(t):=\log\left(\frac{V_t^R-V_t^L}{y^R-y^L}\right).\]
Set $\tilde{W}_s=\hat{W}_t$ when $s=s(t)$. Then we have 
\begin{equation}\label{eqn::SDE_changecoordinate}
d\tilde{W}_s=q(\tilde{W}_s)d\tilde{B}_s,\quad \text{where }q(x)=\sqrt{2(1-x^2)},
\end{equation}
and $(\tilde{B}_s, s\ge 0)$ is a standard one-dimensional Brownian motion. The process $\tilde{W}$ starts from $\tilde{W}_0=\frac{-y^L-y^R}{y^R-y^L}$, evolves according to~\eqref{eqn::SDE_changecoordinate}, and stops when it hits $\{-1, +1\}$ at finite time $T$. 
From above, we see how to transform from the process in~\eqref{eqn::SDE_sle4} to the one in~\eqref{eqn::SDE_changecoordinate}. 
We will show that this is one-to-one transform up to a scaling constant. 

\begin{lemma}\label{lem::coordinatechange}
Suppose $(\tilde{Y}_s)$ is a continuous process starting from $\tilde{Y}_0\in (-1, 1)$, evolving according to $d\tilde{Y}_s=q(\tilde{Y}_s)d\tilde{B}_s$, and stopped when it hits $\{-1, +1\}$. Define
\[t(s)=\frac{1}{8}\int_0^s e^{2u}(1-\tilde{Y}_u^2)du,\quad s(t)=\sup\{s: t(s)\le t\}.\]
Define 
\[Y_t=\frac{1}{2}e^{s(t)}\tilde{Y}_{s(t)}+\frac{1}{2}\int_0^{s(t)} e^u\tilde{Y}_udu-\frac{1}{2}\tilde{Y}_0. \]
Then $(Y_t)$ is a  continuous process starting from $0$ and evolving according to  
\begin{equation}\label{eqn::SDE_aux}
dY_t=2dB_t+\frac{2dt}{V_t^L-Y_t}+\frac{2dt}{V_t^R-Y_t}, \quad dV_t^L=\frac{2dt}{V_t^L-Y_t}, \quad dV_t^R=\frac{2dt}{V_t^R-Y_t},
\end{equation} 
where $V_0^L=-\frac{1}{2}(1+\tilde{Y}_0)$ and $V_0^R=\frac{1}{2}(1-\tilde{Y}_0)$. 
\end{lemma}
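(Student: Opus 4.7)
The plan is to invert the forward coordinate change used to go from~\eqref{eqn::SDE_sle4} to~\eqref{eqn::SDE_changecoordinate} by explicitly constructing candidate processes $V^L$ and $V^R$ from $\tilde{Y}$, and then verifying that $(Y, V^L, V^R)$ solves the system~\eqref{eqn::SDE_aux}. Guided by the forward relations $\tilde{W}_s = (2W_t - V_t^L - V_t^R)/(V_t^R - V_t^L)$ and $V_t^R - V_t^L = (V_0^R - V_0^L)e^{s(t)}$, and noting that the prescribed initial values give $V_0^R - V_0^L = 1$, I set
\[
V_t^L := \tfrac{1}{2}\!\int_0^{s(t)}\! e^u \tilde{Y}_u\, du - \tfrac{1}{2}\tilde{Y}_0 - \tfrac{1}{2}e^{s(t)}, \qquad
V_t^R := \tfrac{1}{2}\!\int_0^{s(t)}\! e^u \tilde{Y}_u\, du - \tfrac{1}{2}\tilde{Y}_0 + \tfrac{1}{2}e^{s(t)}.
\]
Direct computation yields $V_0^L = -\tfrac{1}{2}(1+\tilde{Y}_0)$, $V_0^R = \tfrac{1}{2}(1-\tilde{Y}_0)$, $Y_0 = 0$, together with the key identities
\[
V_t^L - Y_t = -\tfrac{1}{2}e^{s(t)}(1+\tilde{Y}_{s(t)}), \qquad V_t^R - Y_t = \tfrac{1}{2}e^{s(t)}(1-\tilde{Y}_{s(t)}),
\]
so that $V_t^L < Y_t < V_t^R$ up to the exit time of $\tilde{Y}$ from $(-1,1)$.

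Next I will differentiate $Y_t$ in the $s$ variable. Applying It\^o to $e^s \tilde{Y}_s$ (using $d\tilde{Y}_s = q(\tilde{Y}_s)\, d\tilde{B}_s$) and adding the contribution of $\tfrac{1}{2}\int_0^s e^u \tilde{Y}_u\, du$ gives
\[
dY = e^s \tilde{Y}_s\, ds + \tfrac{1}{2} e^s q(\tilde{Y}_s)\, d\tilde{B}_s.
\]
The time change $dt = \tfrac{1}{8}e^{2s}(1-\tilde{Y}_s^2)\, ds$ converts the drift into $\dfrac{8\tilde{Y}_{s(t)}}{e^{s(t)}(1-\tilde{Y}_{s(t)}^2)}\, dt$, and a one-line partial fractions computation shows that this equals $\dfrac{2}{V_t^L - Y_t} + \dfrac{2}{V_t^R - Y_t}$ using the identities above. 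The same time change applied pathwise to $V_t^L$ and $V_t^R$ (which are of bounded variation in $t$) produces $dV_t^L = \tfrac{2\, dt}{V_t^L - Y_t}$ and $dV_t^R = \tfrac{2\, dt}{V_t^R - Y_t}$ by the same algebra.

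For the martingale part, the process $M_s := \tfrac{1}{2}\!\int_0^s e^u q(\tilde{Y}_u)\, d\tilde{B}_u$ has quadratic variation $\tfrac{1}{2}\!\int_0^s e^{2u}(1-\tilde{Y}_u^2)\, du = 4\, t(s)$, so Theorem~\ref{Time-change theorem} (Dubins--Schwarz) produces a standard Brownian motion $B$ with $M_{s(t)} = 2 B_t$ up to the exit time, completing the system~\eqref{eqn::SDE_aux}. The argument is essentially calculational, and the main bookkeeping point to be careful with is that the time change $t(s)$ is strictly increasing on $[0, T)$ (so $s(t)$ is an honest inverse) and that $q$ degenerates at $\pm 1$, so the Dubins--Schwarz identification only runs up to the exit time of $\tilde{Y}$ from $(-1,1)$; beyond that the dynamics~\eqref{eqn::SDE_aux} are themselves ill-defined because $V_t^L - Y_t$ or $V_t^R - Y_t$ hits zero, matching the swallowing of a force point by the SLE.
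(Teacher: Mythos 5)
Your proposal is correct and follows essentially the same route as the paper. You define the same candidate processes $V^L$, $V^R$, and Brownian motion $B$ (your $V^L_t$, $V^R_t$ are algebraically identical to the paper's $V_0^L - \tfrac{1}{2}\int_0^{s(t)} e^u(1-\tilde Y_u)\,du$ and $V_0^R + \tfrac{1}{2}\int_0^{s(t)} e^u(1+\tilde Y_u)\,du$ after expanding $\int_0^s e^u\,du = e^s-1$), and then verify the system by It\^o and the time change. The only difference is presentational: the paper states the expressions and asserts they solve~\eqref{eqn::SDE_aux}, while you carry out the calculation explicitly, including the quadratic variation $\langle M\rangle_{s(t)} = 4t(s)$ that justifies the Dubins--Schwarz identification $M_{s(t)} = 2B_t$ and the remark that the identification runs only up to the exit time of $\tilde Y$ from $(-1,1)$.
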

\begin{proof}
Set
\[B_t=\frac{1}{4}\int_0^{s(t)}e^uq(\tilde{Y}_u)d\tilde{B}_u;\]
\[V^L_t=V_0^L-\frac{1}{2}\int_0^{s(t)} e^u(1-\tilde{Y}_u)du,\quad V^R_t=V_0^R+\frac{1}{2}\int_0^{s(t)}e^u(1+\tilde{Y}_u)du.\]
Then $(B_t)$ is a standard Brownian motion, and $(Y_t, V_t^L, V_t^R)$ solves~\eqref{eqn::SDE_aux}. 
\end{proof}

\subsection{Scaling limits of level lines in discrete GFF}
\label{subsec::scaling_limits_dgff}
We will derive Theorem~\ref{thm::crossingproba_cvg_dgff} in this section following the proof in~\cite{SchrammSheffield09}. Although our setup is different from it is in~\cite{SchrammSheffield09}, similar techniques work. In fact, our setting is much easier to treat. As the proof in~\cite{SchrammSheffield09} is long and technical (and we do not know how to simplify it), we only sketch the proof in our setting. 

Recall that $\phi_{\delta}$ is discrete GFF on $V_{\delta}$ with alternating boundary condition~\eqref{eqn::alternatingbc}, and $\eta_{\delta}$ is the level line of $\phi_{\delta}$ starting from $b_{\delta}^{\diamond}$ and stopped when it hits the boundary segment $[c_{\delta}, a_{\delta})$. Fix a conformal map $\varphi$ from $R_L$ onto $\HH$ with $\varphi(a)<\varphi(b)=0<\varphi(c)<\varphi(d)=\infty$. Denote by $y^L=\varphi(a)<0$ and by $y^R=\varphi(c)>0$. Consider $\varphi(\eta_{\delta})$ in $\HH$ parameterized by the half-plane capacity. Denote by $W_t^{\delta}$ its driving function and by $g_t^{\delta}$ the corresponding family of conformal maps. For $\eps>0$, let $T_{\eps}^{\delta}$ be the first time that $\varphi(\eta_{\delta})$ gets within $\eps$-distance of the points $\varphi(a_{\delta})$ or $\varphi(c_{\delta})$. Recall that $(W_t)$ is the driving function of $\eta\sim\SLE_4(-2; -2)$ as in~\eqref{eqn::SDE_sle4}. Let $T_{\eps}$ be the first time that $\eta$ gets within $\eps$-distance of the points $y^L$ or $y^R$. 

\begin{lemma}\label{lem::crossingproba_cvg_drivingfunction}
For any fixed $\eps, \alpha>0$ small, there is $\delta_0=\delta_0(\eps, \alpha)>0$ such that, if $\delta\le\delta_0$, the interface $\eta_{\delta}$ can be coupled with $\eta$ so that 
\[\PP\left(\sup_{t\in [0, T^{\delta}_{\eps}\wedge T_{\eps}]}|W^{\delta}_t-W_t|\ge \alpha\right)\le \alpha. \] 
\end{lemma}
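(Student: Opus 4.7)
The plan is to adapt the Schramm--Sheffield strategy of~\cite{SchrammSheffield09} to our alternating four-arc boundary configuration. The argument decomposes into three steps: tightness of $\{\eta_\delta\}$, identification of a martingale observable that characterises the limit, and promotion of observable convergence to the quantitative driving-function coupling required in the statement.

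First, I would establish tightness of the discrete level lines $\eta_\delta$ (and hence of $W^\delta$ up to $T^\delta_\eps$) in the uniform topology on curves via an Aizenman--Burchard criterion. The input is the RSW-type annulus crossing estimates for the discrete GFF that were assembled in Section~\ref{DGFF RSW}: they rule out macroscopic bottlenecks and multi-arm events, so the family $\{\eta_\delta\}$ has precompact laws and every subsequential limit is supported on continuous simple curves avoiding $\{a,c\}$ until time $T_\eps$.

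Second, the limit is identified via a discrete martingale observable coming from the Markov property applied along the level line. For any fixed interior point $z$, set $h^\delta_t(z):=\E[\phi_\delta(z)\mid \eta_\delta[0,t]]$; this is a bounded martingale equal to the discrete-harmonic extension with boundary values $+\lambda$ on $[a_\delta,b_\delta)$ together with the left side of $\eta_\delta[0,t]$, and $-\lambda$ on $[b_\delta,c_\delta)\cup[d_\delta,a_\delta)$ together with the right side. Pushing forward by $g_t^\delta\circ\varphi$ to $\mathbb{H}$, the continuum limit of $h^\delta_t(z)$ is an explicit linear combination of arguments of $z-W_t$, $z-V^L_t$, $z-V^R_t$. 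Imposing the martingale condition against the Loewner flow and expanding in powers of $1/z$ near infinity pins the driving triple $(W_t,V^L_t,V^R_t)$ to satisfy~\eqref{eqn::SDE_sle4}, provided $\lambda$ is set equal to the critical value $\lambda_0$ that balances the coefficients.

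Finally, on the event $\{t\le T^\delta_\eps\wedge T_\eps\}$ the quantities $W_t-V^L_t$ and $V^R_t-W_t$ stay bounded below by a function of $\eps$, so the SDE coefficients in~\eqref{eqn::SDE_sle4} are Lipschitz on the relevant state space, and a standard Skorokhod embedding plus stability estimate upgrades the approximate martingale relations into the uniform coupling $\sup_{t \le T^\delta_\eps \wedge T_\eps}|W^\delta_t-W_t|\le \alpha$. The main obstacle is the rigorous passage from the discrete harmonic observable to its continuum analog uniformly over the rough (possibly fractal) interface traced by $\eta_\delta$: one must show that the discrete harmonic measure in the slit domain $\til V_\delta \setminus \eta_\delta[0,t]$ converges to its continuum counterpart with quantitative error that is uniform up to the stopping time. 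As in~\cite{SchrammSheffield09}, this is handled by combining Beurling-type estimates with the a priori regularity produced by tightness and the random walk estimates of Section~\ref{Prelims}; the time change of Lemma~\ref{lem::coordinatechange} is then convenient for transferring bounds between the Loewner and Brownian parametrisations.
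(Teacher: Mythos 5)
Your overall plan follows the same Schramm--Sheffield martingale-observable route that the paper uses: the discrete harmonic observable $\E[\phi_\delta(v)\mid\eta_\delta[0,t]]$ compared against its continuum analogue (this is the paper's $F^\delta_{s_k}$ versus $H_k$, controlled via \cite[Proposition~3.27]{SchrammSheffield09}), the resulting approximate-SDE estimates for the time-changed driving function (the analogue of \cite[Proposition~4.2]{SchrammSheffield09}, which is exactly~\eqref{eqn::keyestimates}), and the transfer back via the coordinate change of Lemma~\ref{lem::coordinatechange}. That part is on target. However, the first step of your plan --- establishing curve tightness via Aizenman--Burchard as a prerequisite --- is both unnecessary and unsupported in the present setting. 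Unnecessary, because in the Schramm--Sheffield scheme Lemma~\ref{lem::crossingproba_cvg_drivingfunction} concerns only the \emph{driving functions}, and the driving-function coupling is obtained directly from the approximate martingale estimates via the arguments of \cite[Sections~4.4--4.6]{SchrammSheffield09} without any prior curve-level precompactness; curve tightness and the upgrade to trace convergence belong to the next stage (invoked in the proof of Theorem~\ref{thm::crossingproba_cvg_dgff} through \cite[Section~4.7 and Lemma~4.16]{SchrammSheffield09}). Unsupported, because Section~\ref{DGFF RSW} proves only upper and lower bounds on a single box-crossing probability (with specific boundary conditions), not the multi-arm annulus-crossing estimates with superlinearly growing exponents that the Aizenman--Burchard criterion requires; if you tried to execute your step~1 as written you would find the needed RSW input is not actually assembled there. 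If you drop that detour and proceed directly with the observable and the coordinate change, the rest of your outline aligns with the paper's proof.
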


\begin{proof}
Denote by $\tilde{W}^{\delta}$ the coordinate change of $W^{\delta}$ as in Section~\ref{subsec::sle}. 
Let $\LF_s$ be the $\sigma$-field generated by $\sigma(\tilde{W}^{\delta}_r, r\le s)$. We will first prove the following conclusion which is analog of~\cite[Proposition~4.2]{SchrammSheffield09}. For any $\eps, \alpha, \beta>0$ small, there exists $C>0$ depending on $\eps$ and there exists $\delta_0>0$ depending on $\eps, \alpha, \beta$ such that the following holds. If $\delta\le\delta_0$ and $s_0, s_1$ are two stopping times for $\tilde{W}^{\delta}$ such that almost surely $s_0\le s_1\le T_{\eps}^{\delta}$, $s_1-s_0\le \beta^2$, and $\sup_{s\in [s_0, s_1]}|\tilde{W}^{\delta}_s-\tilde{W}^{\delta}_{s_0}|\le \beta$, then the following two estimates hold with probability at least $1-\alpha$:
\begin{equation}\label{eqn::keyestimates}
\left|\E\left[\Delta\tilde{W}^{\delta}\cond \LF_{s_0}\right]\right|\le C\beta^3,\quad \left|\E\left[(\Delta\tilde{W}^{\delta})^2-q(\tilde{W}^{\delta}_{s_0})^2\Delta s\cond \LF_{s_0}\right]\right|\le C\beta^3,
\end{equation}
where $\Delta s=s_1-s_0$ and $\Delta\tilde{W}^{\delta}=\tilde{W}^{\delta}_{s_1}-\tilde{W}^{\delta}_{s_0}$. Roughly speaking, \eqref{eqn::keyestimates} corresponds to the discrete version of~\eqref{eqn::SDE_changecoordinate}. 

For $t>0$, let $F^{\delta}_t$ be the function defined on $V_{\delta}$ that is $-\lambda_0$ on vertices to the right side of $\eta_{\delta}[0,t]$, 
$+\lambda_0$ on the vertices to the left side of $\eta_{\delta}[0,t]$, equal to the boundary data on $\partial V_{\delta}$, and is harmonic at all other vertices in $V_{\delta}$. Suppose $v\in V_{\delta}$ such that the distance between $v$ and $\partial R_L$ is at least $1/4$. For $k=0,1$, define $X_k=\E[\phi_{\delta}(v)\cond \LF_{s_k}]$ and let $\LA_k$ be the event 
\[|X_k-F_{s_k}(v)|\ge\beta^5. \]
By~\cite[Proposition~3.27]{SchrammSheffield09}, we have $\PP(\LA_k)\le \frac{1}{4}\alpha\beta^5$ if $\delta$ is small enough. Consequently, we have
\begin{align*}
|X_0-F_{s_0}(v)|&\le \beta^5,\quad\text{on }\LA_0^c;\\
|\E[X_1-F_{s_1}(v)\cond \LF_{s_0}]|&\le \beta^5+O(1)\PP(\LA_1\cond \LF_{s_0}). 
\end{align*}
Let $\LA$ be the event $\PP(\LA_1\cond \LF_{s_0})\ge \beta^5$, then $\PP(\LA)\le \frac{1}{4}\alpha$. Combing the above two estimates and the fact that $\E[X_1\cond\LF_{s_0}]=X_0$, we have
\[\E[F_{s_1}(v)\cond\LF_{s_0}]-F_{s_0}(v)=O(\beta^5),\quad\text{on }\LA_0^c\cap\LA^c.\]
For $k=0,1$, let $H_k$ be the (continuous) function that is $-\lambda_0$ to the right side of $\eta_{\delta}[0,s_k]$, 
$+\lambda_0$ to the left side of $\eta_{\delta}[0,s_k]$, equal to the boundary data on $\partial R_L$, and is harmonic in $R_L\setminus\eta_{\delta}[0,s_k]$. Then $H_k(v)-F_{s_k}(v)$ is small if $\delta$ is small enough and the distance between $v$ and $\eta_{\delta}[0,s_k]$ is bounded from below. Precisely, let $\LB_k$ be the event $|H_k(v)-F_{s_k}(v)|\ge \beta^5$, we have $\PP(\LB_k)\le\frac{1}{4}\alpha\beta^5$ if $\delta$ is small enough. Let $\LB$ be the event $\PP(\LB_1\cond\LF_{s_0})\ge\beta^5$, then $\PP(\LB)\le\frac{1}{4}\alpha$. From the above estimate, we have
\begin{equation}\label{eqn::keyestimates_aux}
\E[H_1(v)\cond\LF_{s_0}]-H_0(v)=O(\beta^5),\quad\text{on }\LA_0^c\cap\LA^c\cap\LB_0^c\cap\LB^c.
\end{equation}
For $k=0,1$, denote by 
\[Z_k=\frac{2g^{\delta}_{s_k}(\varphi(v))-g^{\delta}_{s_k}(y^L)-g^{\delta}_{s_k}(y^R)}{g^{\delta}_{s_k}(y^R)-g^{\delta}_{s_k}(y^L)}. \]
Then we have
\[H_k(v)=\lambda_0-\frac{2\lambda_0}{\pi}\arg(Z_k-1)+\frac{2\lambda_0}{\pi}\arg(Z_k-\tilde{W}^{\delta}_{s_k})-\frac{2\lambda_0}{\pi}\arg(Z_k+1).\]
By the calculation in \cite[Proof of Proposition~4.2]{SchrammSheffield09}, we have
\[\frac{\pi}{2\lambda_0}(H_1(v)-H_0(v))=\frac{y}{x^2+y^2}\left(\frac{x}{x^2+y^2}\left(q(\tilde{W}^{\delta}_{s_0})^2\Delta s-(\Delta\tilde{W}^{\delta})^2\right)-\Delta\tilde{W}^{\delta}\right)+O(\beta^3),\]
where $x=\Re(Z_0-\tilde{W}^{\delta}_{s_0})$ and $y=\Im(Z_0)$. Plugging into~\eqref{eqn::keyestimates_aux}, with probability at least $1-\alpha$, we have
\[\E\left[\frac{x}{x^2+y^2}\left(q(\tilde{W}^{\delta}_{s_0})^2\Delta s-(\Delta\tilde{W}^{\delta})^2\right)-\Delta\tilde{W}^{\delta}\cond \LF_{s_0}\right]=O(\beta^3). \]
By different choice of $v$, we obtain~\eqref{eqn::keyestimates}. 

Recall that $W$ is driving function of $\SLE_4(-2;-2)$. Denote by $\tilde{W}$ the coordinate change of $W$ as in Section~\ref{subsec::sle}. 
With~\eqref{eqn::keyestimates} at hand, by arguments in \cite[Section~4.4]{SchrammSheffield09}, we have the following conclusion. For any fixed $\eps, \alpha>0$ small, there is $\delta_0>0$ such that, if $\delta\le\delta_0$, there is coupling between $\tilde{W}^{\delta}$ and $\tilde{W}$ so that 
\[\PP\left(\sup_{s\in [0, T_{\eps}^{\delta}\wedge T_{\eps}]}|\tilde{W}^{\delta}_s-\tilde{W}_s|\ge\alpha\right)\le \alpha. \]
Finally, by Lemma~\ref{lem::coordinatechange} and arguments in \cite[Section~4.6]{SchrammSheffield09}, we obtain the conclusion. 
\end{proof}

\begin{proof}[Proof of Theorem~\ref{thm::crossingproba_cvg_dgff}]
Recall that $\eta_{\delta}$ is the level line of $\phi_{\delta}$, we parameterize $\varphi(\eta_{\delta})$ by the half plane capacity, and we denote by $W^{\delta}_t$ its driving function. Recall that $\eta\sim\SLE_4(-2;-2)$ in $\HH$ with force points $y^L=\varphi(a)<0$ and $y^R=\varphi(c)>0$, we denote by $W_t$ its driving function.  
From Lemma~\ref{lem::crossingproba_cvg_drivingfunction}, $W^{\delta}$ is close to $W$ in local uniform topology. Combing with \cite[Section~4.7 and Lemma~4.16]{SchrammSheffield09}, $\varphi(\eta^{\delta})$ and $\eta$ are close in Hausdorff metric. Precisely, for any fixed $\eps, \alpha>0$ small, there is $\delta_0>0$ such that, if $\delta\le\delta_0$, the interface $\eta_{\delta}$ can be coupled with $\eta$ so that
\[\PP\left(\sup_{t\in [0, T_{\eps}^{\delta}\wedge T_{\eps}]}d_H(\varphi(\eta_{\delta}[0,t], \eta[0,t])\ge\alpha\right)\le\alpha,\]
where $d_H$ denotes the Hausdoff metric. For such argument to work, it is important that $(\eta(t), 0\le t\le T)$ hits $\R$ only at the two end points $\eta(0)=0$ and $\eta(T)\in \{y^L, y^R\}$. Then, using arguments in \cite[Section~4.8]{SchrammSheffield09}, we arrive at the following conclusion. For any fixed $\eps, \alpha>0$ small, there is $\delta_0>0$ such that, if $\delta\le\delta_0$, the interface $\eta_{\delta}$ can be coupled with $\eta$ so that 
\[\PP\left(\sup_{t\in [0, T_{\eps}^{\delta}\wedge T_{\eps}]}|\varphi(\eta_{\delta}(t)-\eta(t)|\ge\alpha\right)\le\alpha.\]

It remains to get the convergence of the whole process. Suppose $\eta_*$ is any subsequential limit of $\eta_{\delta}$ in Hausdorff metric. From the above argument, we see that $\varphi(\eta_*)$ has the same law as $\eta$ up to $T_{\eps}$ for any $\eps>0$. Note that $\eta$ is continuous up to and including $T$. We may conclude that $\varphi(\eta_*)$ has the same law as $\eta$ up to $T$. This gives the convergence of the whole process. Consequently, we obtain the convergence of the crossing probability (combining with~\eqref{eqn::sle_crossingproba})
\begin{align*}
\lim_{\delta\to 0}\mbb{P}\left([a_\delta, b_\delta) \stackrel{\phi_\delta \geq 0}{\longleftrightarrow}[c_\delta, d_\delta)\right)
=\PP(\eta(T)=y^R)=\frac{-y^L}{y^R-y^L}.
\end{align*}
\end{proof}
We end this section by a discussion on the convergence of discrete GFF level lines in Theorem~\ref{thm::crossingproba_cvg_dgff} when $\lambda\neq \lambda_0$. By analyzing level lines of continuous GFF as in \cite{WangWuLevellinesGFFI}, we believe that the level line $\eta_{\delta}$ of $\phi_{\delta}$ converges weakly to $\eta\sim \SLE_4(-2\rho, \rho-1; \rho-1, -2\rho)$ in $R_L$ from $b$ to $d$ with force points $(a, b^-; b^+, c)$ where $\rho=\lambda/\lambda_0$. However, the techniques in~\cite{SchrammSheffield09} do not  apply to this general setting directly to our knowledge. In particular, the authors in~\cite{SchrammSheffield09} derived the convergence of driving function when $\lambda=0$: they proved that the driving function of $\eta_{\delta}$ weakly converges to the driving function of $\SLE_4(-1; -1)$ in the local uniform topology; however, the convergence in stronger topology is still missing. Assuming the convergence of $\eta_{\delta}$ to $\eta\sim\SLE_4(-1;-1)$ in Haudorff metric, we may conclude that the crossing probability in~\eqref{DGFF zero boundary result} is convergent:
\[\lim_{\delta\to 0}\mbb{P}\left([a_\delta, b_\delta) \stackrel{\phi_\delta \geq 0}{\longleftrightarrow}[c_\delta, d_\delta)\right)=\PP(\eta\text{ hits }[c,d) \text{ before }[d,a)).\]
Whereas, to get an explicit formula as in~\eqref{eqn::crossingproba_limit} for the right-hand side is another open question. 

\subsection{Scaling limits of level lines in metric graph GFF}
\label{subsec::scaling_limits_mgff}

Recall that $\tilde{\phi}_{\delta}$ is metric graph GFF on $\tilde{V}_{\delta}$. Suppose the boundary condition is the following: (note that this is different from the one in~\eqref{eqn::alternatingbc})
\begin{equation}\label{eqn::mgff_bc_aux}
\tilde{\phi}_{\delta}(v)=
\begin{cases}
\lambda, & v\in [a_{\delta},b_{\delta})\cup [c_{\delta},d_{\delta}),\\
0,& v\in [b_{\delta},c_{\delta})\cup [d_{\delta},a_{\delta}).
\end{cases}
\end{equation} 
Recall that 
\[\tilde{\A}_{\delta, 0}^l=\left\{v\in\tilde{V}_{\delta}: \exists \text{ continuous path }\gamma\text{ connecting $v$ to $[a_{\delta},b_{\delta})$ such that }\tilde{\phi}_{\delta}\ge 0 \text{ on }\gamma\right\}. \]
Define the frontier of $\tilde{\A}_{\delta, 0}^l$ as follows: Consider the set of all the points in $\partial\tilde{\A}_{\delta, 0}^l$ that are connected in $\tilde{\A}_{\delta, 0}^l$ to $[a_{\delta},b_{\delta})$ and in $\tilde{V}_{\delta}\setminus\tilde{\A}_{\delta, 0}^l$ to $[b_{\delta},c_{\delta})$. This set contains no vertices and the edges it intersects in the dual graph give a path from the point $b_{\delta}^{\diamond}$ to the point $(3\delta/2, 1)$ (near $a$) or the point $(L, 3\delta/2)$ (near $c$). We call such path the frontier of $\tilde{\A}_{\delta, 0}^l$. Using conclusions in~\cite[Section~5.2]{ALS18}, we may conclude that, when $\lambda=2\lambda_0$, the frontier of $\tilde{\A}_{\delta, 0}^l$ weakly converges to $\SLE_4(-2;-2)$ in $R_L$ from $b$ to $d$ with force points $(a;c)$ in Hausdorff metric as $\delta=2^{-n}\to 0$. Consequently, we have the convergence of the crossing probabilities: (see~\cite[Remark~5.11]{ALS18})
\begin{equation}\label{eqn::crossingproba_limit_mgff}
\lim_{\delta=2^{-n}\to 0}\mbb{P}\left([a_\delta, b_\delta) \stackrel{\tilde{\phi}_\delta \geq 0}{\longleftrightarrow}[c_\delta, d_\delta)\right)=
\frac{(\varphi(b)-\varphi(a))(\varphi(d)-\varphi(c))}{(\varphi(c)-\varphi(a))(\varphi(d)-\varphi(b))},
\end{equation}
where $\varphi$ is any conformal map from $R_L$ onto the upper-half plane $\HH$ with $\varphi(a)<\varphi(b)<\varphi(c)<\varphi(d)$. We emphasize that the metric graph GFF~\eqref{eqn::crossingproba_limit_mgff} has the boundary condition~\eqref{eqn::mgff_bc_aux} with $\lambda=2\lambda_0$ which is different from the one in~\eqref{eqn::alternatingbc}. 

Let us go back to the boundary condition~\eqref{eqn::alternatingbc} and to~\eqref{MGFF alternating boundary result} in Theorem~\ref{thm::Alternating boundary result}. Suppose we have metric graph GFF with boundary condition~\eqref{eqn::alternatingbc}. Using conclusions in~\cite{ALS18}, we may conclude that the crossing probabilities in~\eqref{MGFF alternating boundary result} are convergent. By Theorem~\ref{thm::Alternating boundary result}, we know that the limit should be different from the one in the case of discrete GFF. To derive the explicit formula for this limit is an interesting question. We will give this explicit formula when $\lambda=2\lambda_0$ in a forthcoming paper~\cite{LW20}.



\begin{thebibliography}{9}
\bibitem[ALS18]{ALS18} 
J.~Aru, T.~Lupu, A.~Sep\'{u}lveda.
\newblock First passage sets of the 2D continuum Gaussian free field: convergence and isomorphisms. 
\newblock Preprint, arXiv:1805.09204.

\bibitem[BS02]{BrownianMotionHandbook}
A.~N. Borodin, P.~Salminen.
\newblock {Handbook of Brownian Motion - Facts and Formulae}.
\newblock {\em Birkh{\"a}user-Verlag}, 2002.

\bibitem[BL76]{BrascampLieb76} 
H.J.~Brascamp, E.~H.~Lieb.
\newblock On extensions of the Brunn-Minkowski and Pr\'{e}kopa-Leindler theorems, including inequalities for log concave functions, and with an application to the diffusion equation. 
\newblock {\em J. Functional Analysis.} 22(4), 366-389, 1976.

\bibitem[DL18]{DingLi18}
J.~Ding, L.~Li.
\newblock Chemical distances for percolation of planar gaussian free fields and
  critical random walk loop soups.
\newblock {\em Comm. Math. Phys.} 360(2), 523-553, 2018. 

\bibitem[DW19]{DingWirth18}
J.~Ding, M.~Wirth.
\newblock Percolation for level-sets of Gaussian free fields on metric graphs.
\newblock {\em Ann. Probab.}, to appear.

\bibitem[Ke97]{Kesten87}
H.~Kesten. 
\newblock Hitting probabilities of random walks on $\mathbb{Z}^d$.
\newblock {\em Stochastic Process. Appl.} 25:165-184, 1987.

\bibitem[LL10]{LawlerLimic10}
G.~F. Lawler, V.~Limic.
\newblock Random Walk: A Modern Introduction.
\newblock {\em Cambridge University Press}, 2010.

\bibitem[Lu16]{Lupu16}
T.~Lupu.
\newblock From loop clusters and random interlacements to the free field.
\newblock {\em Ann. Probab.}, 44(3):2117--2146, 2016.

\bibitem[LW18]{LW18}
T.~Lupu, W.~Werner.
\newblock The random pseudo-metric on a graph defined via the zero-set of the
  {G}aussian free field on its metric graph.
\newblock {\em Probab. Theory Relat. Fields} 171(3-4): 775-818, 2018. 

\bibitem[LW20]{LW20}
M.~Liu, H.~Wu.
\newblock Scaling limits of crossing probabilities in metric graph GFF.
\newblock In preparation.  

\bibitem[Pr74]{Preston74} 
C.J.~Preston.
\newblock  A generalization of the FKG inequalities.
\newblock {\em Comm. Math. Phys.} 36, 233–241, 1974.

\bibitem[Pi82]{Pitt82} 
L. D.~Pitt.
\newblock  Positively correlated normal variables are associated. 
\newblock {\em Ann. Probab.} 10, 496–499, 1982.

\bibitem[RS05]{RohdeSchrammSLEBasicProperty}
S.~Rohde, O.~Schramm.
\newblock Basic properties of {SLE}.
\newblock {\em Ann. of Math. (2)}, 161(2):883--924, 2005.


\bibitem[RY99]{RevusYor99}
D.~Revuz, M.~Yor.
\newblock Continuous Martingales and Brownian Motion.
\newblock {\em Springer-Verlag Berlin Heidelberg}, 1999.

\bibitem[SS09]{SchrammSheffield09}
O.~Schramm, S.~Sheffield. 
\newblock Contour lines of the two-dimensional discrete Gaussian free field.
\newblock {\em Acta Math}, 202(1):21-137, 2009.

\bibitem[WW17]{WangWuLevellinesGFFI}
M.~Wang, H.~Wu.
\newblock Level lines of {G}aussian {F}ree {F}ield {I}: {Z}ero-boundary {GFF}.
\newblock {\em Stochastic Process. Appl.}, 127(4):1045--1124, 2017.

\end{thebibliography}
\end{document}